\newtheorem{thm}[equation]{Theorem}
\newtheorem{cor}[equation]{Corollary}
\newtheorem{prop}[equation]{Proposition}
      \newtheorem{defi}[equation]{Definition}
      \newtheorem{rem}[equation]{Remark}
      \newtheorem{ex}[equation]{Example}
      \numberwithin{equation}{subsection}
\title{\bf Simplicial Nerve of an \(\mathcal{A}_∞\)-category.}
\author{Giovanni Faonte \footnote{Department of Mathematics, Yale University, 10 Hillhouse Avenue, New Haven CT 06520 USA. Email: giovanni.faonte@yale.edu}
  }
\begin{document}


\maketitle

\abstract{In this paper we introduce a functor, called the simplicial nerve of an $\mathcal{A}_{\infty}$-category, defined on the category of (small) $\mathcal{A}_{\infty}$-categories with values in simplicial sets. We prove that the nerve of an $\mathcal{A}_{\infty}$-category is an $\infty$-category in the sense of J.Lurie \cite{Lurie 1}. This construction generalize the nerve construction for differential graded categories given in \cite{Lurie 2}. We prove that if a differential graded category is pretriangulated in the sense of A.I.Bondal-M.Kapranov \cite{MK}, then its nerve is a stable $\infty$-category in the sense of J.Lurie \cite{Lurie 2}.}

\tableofcontents

\newpage
\section{Introduction}

$\mathcal{A}_{\infty}$-algebras were introduced by J.D.Stasheff \cite{Sta} in order to encode the notion of an operation associative up to a coherent system of homotopies. An $\mathcal{A}_{\infty}$-algebra is a $\mathbb{Z}$-graded vector space $V$ over a base field $\mathbb{K}$, togheter with degree $2-n$ graded maps
\[
m_n:V^{\otimes n}\to V
\]
for $n\ge 1$, such that 
\[
\sum_{n=r+t+s} (-1)^{sr+t} m_{r+t+1}(Id^{\otimes^r}\otimes m_s \otimes Id^{\otimes^t})=0
\]
$\mathcal{A}_{\infty}$-algebras with $m_n=0$, for $n>2$, are differential graded algebras. 

M.Kontsevich-Y.Soibelman \cite{Ko} gave a geometric interpretation of the notion of $\mathcal{A}_{\infty}$-algebra. Namely, $\mathcal{A}_{\infty}$-algebras correspond to non-commutative formal graded manifolds over a field $\mathbb{K}$, together with a marked $\mathbb{K}$-point and a vector field of degree $+1$, such that $d|_{pt}=0$ and $[d,d]=0$. Such vector fields are called homological vector fields. In this interpretation, the maps $m_n$ are the coefficients of the Taylor expansion of the vector field $d$. 

$\mathcal{A}_∞$-categories were introduced by K.Fukaya \cite{Fuk} as a generalization of the notion of $\mathcal{A}_∞$-algebra. Indeed, $\mathcal{A}_∞$-algebras correspond to $\mathcal{A}_∞$-categories with one object. The work of Fukaya exploited the connection between $\mathcal{A}_∞$-categories and symplectic geometry. Namely, the Fukaya category $\mathcal{F}(X)$ is an $\mathcal{A}_∞$-category associated to a symplectic manifold $X$ and plays a relevant role in mirror symmetry. On the geometric side, $\mathcal{A}_∞$-categories correspond to non-commutative formal graded manifolds together with the choice of a closed subscheme of disjoint points (objects) and an homological vector field compatible with the embedding of this subscheme and with the projection onto it. Similarly to the case of $\mathcal{A}_{\infty}$-algebras, differential graded categories have a faithful embedding in the category of $\mathcal{A}_∞$-categories.

In Section 2 we define a functor, called the simplicial nerve of an $\mathcal{A}_{\infty}$-category
\[
N_{\mathcal{A}_∞}: \mathcal{A}_∞Cat_{\mathbb{K}}\to SSet
\]
and we prove that the nerve of an $\mathcal{A}_{\infty}$-category is and $\infty$-category in the sense of J.Lurie \cite{Lurie 1}. The nerve construction is performed defining a cosimplicial $\mathcal{A}_{\infty}$-category, generated by the standard simplicies and denoted by  $\mathcal{A}[\Delta^{-}]$, and by taking $\mathcal{A}_{\infty}$-functors from $\mathcal{A}[\Delta^{-}]$ to any $\mathcal{A}_{\infty}$-category $\mathcal{A}$. Its restriction to the category of (small) dg-categories, that we call the small dg-nerve of a dg-category, provides a functorial description of the differential graded nerve defined by J.Lurie in \cite{Lurie 2}
\[
N_{dg}^{sm}: dgCat_{\mathbb{K}}\to SSet
\]
The relevant fact about this construction is that it identifies the homotopy category of $N_{dg}^{sm}(\mathcal{D})$ and the $0$-th cohomology category of $\mathcal{D}$. We remark that the small dg-nerve was earlier defined by V. A. Hinich and V. V. Schechtman in \cite{HSc}.

In fact, there are two different nerve-type constructions associating a simplicial set to a dg-category. J.Lurie in \cite{Lurie 2} constructed also what we will call the big dg-nerve of a dg-category, to underline the difference with the small dg-nerve. Given a differential graded category $\mathcal{D}$, consider the simplicial category $\mathcal{D}_{\Delta}$ whose mapping spaces are obtained by applying the Dold-Kan correspondence to the truncation of the cochain complex of morphisms in $\mathcal{D}$. General fact is that simplicial categories have themselves a nerve construction. More precisely, there is an adjunction \citep[Chap. 2, \S 2.2]{Lurie 1}
\[
\begin{tikzpicture}
\matrix (m) [matrix of math nodes, row sep=2em, column sep=3pc,
  text width=2.2pc, text height=1pc, text depth=.5pc] { 
   SCat & SSet \\
  }; 
\path[<-] 
(m-1-1.15) edge node[above] {\tiny $\mathcal{C}[-]$}  (m-1-2.165)
(m-1-2.-165) edge node[below] {\tiny $ N_{SCat}$}  (m-1-1.-15);
\end{tikzpicture}
    \]
and the big dg-nerve is defined as the value of $N_{SCat}$ on the simplicial category $\mathcal{D}_{\Delta}$ and denoted by $N_{dg}^{big}(\mathcal{D})$. In section 3 we compare those two $\infty$-categories associated to a differential graded category. Namely, $N_{dg}^{big}(\mathcal{D})$ and $N_{dg}^{sm}(\mathcal{D})$ are equivalent $\infty$-categories \cite{Lurie 2}. We provide the construction of such equivalence describing a cubical interpretation of the mapping spaces of the simplicial categories generated by the standard $n$-simplex $\mathcal{C}[\Delta^n]$.

In Section 4 we establish a connection between pretriangulated differential graded categories in the sense of A.I.Bondal-M.Kapranov \cite{MK} and stable $\infty$-categories in the sense of J.Lurie \cite{Lurie 2}. Namely, we prove that if $\mathcal{D}$ is a pretriangulated differential graded category, then $N_{dg}^{big}(\mathcal{D})$, and hence $N_{dg}^{sm}(\mathcal{D})$, is a stable $\infty$-category. Moreover, $H^0(\mathcal{D})$ is identified, as a triangulated category, with the homotopy category of $N_{dg}^{big}(\mathcal{D})$.
It is known that the notion of triangulated category introduced by J.-L.Verdier \cite{Ver} lacks functoriality for cones. For this purpose, A.I.Bondal-M.Kapranov \cite{MK} introduced the notion of a pretriangulated differential graded category. To a dg-category $\mathcal{D}$ it is possible to associate the dg-category of twisted complexes in $\mathcal{D}$, denoted by $PreTr(\mathcal{D})$, whose construction has to be understood as a triangulated hull of $\mathcal{D}$. $PreTr(\mathcal{D})$ has a shift functor and a functorial notion of cones inducing a triangulated structure on its $0$-th cohomology category $H^0(PreTr(\mathcal{D}))$. In particular, if $\mathcal{D}$ is pretriangulated, the dg-embedding $\mathcal{D}\to PreTr(\mathcal{D})$ is a quasi-equivalence of dg-categories and hence $\mathcal{D}$ inherits shift and cones from $PreTr(\mathcal{D})$ making $H^0(\mathcal{D})$ into a triangulated category. A similar notion, on the $\infty$-categorical side, is given by the notion of stable $\infty$-category. Stable $\infty$-categories, introduced by J.Lurie in \cite{Lurie 2}, are an axiomatization of the property of stable homotopy theory. In this context, it is possible to define loop and suspension $\infty$-functors and to give meaning to the notion of exact sequences in a way that the homotopy category inherits a triangulated structure. Our result allows hence to compare the two notions. 
\vspace{5 mm}

Finally, let us discuss the relations of this paper with earlier works. As already pointed, both nerve constructions for dg-categories and their main properties are stated by J.Lurie in \citep[Chap. 1, \S 1.3.1]{Lurie 2}. We extend $N_{dg}^{sm}$, functorially, to the context of $\mathcal{A}_{\infty}$-categories, thus realizing a suggestion in \citep[Remark 1.3.1.7]{Lurie 2} and we provide more details on how to construct the equivalence between $N_{dg}^{sm}$ and $N_{dg}^{big}$. In a recent paper \cite{Coh} L.Cohn proves that there exists an equivalence of $\infty$-categories between: 
\begin{itemize}
\item the underlying $\infty$-category associated to the Morita model category structure on $dgCat_{\mathbb{K}}$ \cite{Toe2}, for which fibrant objects are idempotent complete pretriangulated dg-categories.
\item the $\infty$-category of stable idempotent complete $\infty$-categories enriched over the Eilenberg-MacLane spectra $H\mathbb{K}$.
\end{itemize}
This result establishes a connection between pretruangulated dg-categories and stable $\infty$-categories which is similar to ours but with idempotent completion forced on both sides. However, neither Verider's notion of a triangulated category, nor Bondal-Kapranov's notion of a pretriangulated dg-category, nor J.Lurie's notion of stable $\infty$-category requires idempotent completion. From this perspective, our approach relates the original notions and the desirability of such a relation was emphasized in \citep[Remark 1.1.0.2]{Lurie 2}. Moreover, our correspondence is explicit: given a pretriangulated dg-category, idempotent complete or not, we prove directly that the associated nerve is a stable $\infty$-category.
\vspace{5 mm}

The author would like to thank T.Dyckerhoff and M.Kapranov for useful discussions and precious insights given through the realization of this work.

\newpage

\section{The nerve construction for $\mathcal{A}_∞$-categories}

\subsection{$\mathcal{A}_∞$-categories and dg-categories}

Let $\mathbb{K}$ be a field. Consider the category $Vect_{\mathbb{Z}}(\mathbb{K})$ whose objects are $\mathbb{Z}$-graded vector spaces over $\mathbb{K}$
\[
V=\bigoplus_{p\in \mathbb{Z}} V^p
\]
and morphisms are given by degree preserving $\mathbb{K}$-linear maps. The category $Vect_{\mathbb{Z}}(\mathbb{K})$ has a symmetric monoidal structure given by the tensor product of graded vector spaces
\[
(V\otimes W)^n=\bigoplus_{p+q=n}V^p\otimes W^q
\]
and symmetric structure given by the natural isomorphism
\[
sym(V,W):V\otimes W \to W\otimes V
\] 
defined on homogeneous elements by
\[
sym(V,W)(v\otimes w)=(-1)^{deg(v)deg(w)}w\otimes v
\]
Given $V,W \in Vect_{\mathbb{Z}}(\mathbb{K})$, the graded space of morphisms $Hom^{\bullet}_{Vect_{\mathbb{Z}}(\mathbb{K})}(V,W)$ is
\[
Hom^{n}_{Vect_{\mathbb{Z}}(\mathbb{K})}(V,W)=\prod_{p\in \mathbb{Z}} Hom_{Vect(\mathbb{K})}(V^p,W^{p+n})
\]
A map $f\in Hom^{n}_{Vect_{\mathbb{Z}}(\mathbb{K})}(V,W)$ is called of degree $n$. \\
Given two morphisms $f\in Hom^{n}_{Vect_{\mathbb{Z}}(\mathbb{K})}(V,W)$ and $g\in Hom^{m}_{Vect_{\mathbb{Z}}(\mathbb{K})}(V',W')$, their tensor product is the map $f\otimes g\in Hom^{n+m}_{Vect_{\mathbb{Z}}(\mathbb{K})}(V\otimes W,V'\otimes W')$, defined on homogeneous elements by
\[
(f\otimes g)(x\otimes y)=(-1)^{deg(x)deg(g)}f(x)\otimes g(y)
\]
Given $V\in Vect_{\mathbb{Z}}(\mathbb{K})$, its suspension $s(V)$ is the graded vector space
\[
s(V)=V\otimes \mathbb{K}[1]
\]
where $\mathbb{K}[1]$ is the graded vector space generated over $\mathbb{K}$ by one generator $\{\mathbbm{1}\}$ of degree $-1$. The suspension map is the invertible morphism of degree $-1$ 
\[
s:V\to s(V)
\]
defined by 
\[
s(v)=v\otimes \mathbbm{1}
\]
Given $V\in Vect_{\mathbb{Z}}(\mathbb{K})$, its reduced tensor coalgebra is the graded vector space 
\[
\overline{T}(V)=\bigoplus_{n\ge 1} V^{\otimes n}
\]
with coalgebra structure given by the comultiplication
\[
\Delta(v_1\otimes \dots \otimes v_n)=\sum_{0\le i\le n} (v_1\otimes \dots \otimes v_i)\otimes (v_{i+1}\otimes \dots \otimes v_n)
\]
We recall the notion of an $\mathcal{A}_∞$-algebra structure on a graded vector space $V$. 
\begin{defi}[$\mathcal{A}_∞$-algebra]
Given $V\in Vect_{\mathbb{Z}}(\mathbb{K})$, an $\mathcal{A}_∞$-algebra structure on $V$ is the datum of a coderivation of degree $+1$ on the reduced tensor coalgebra $\overline{T}(s(V))$
\[
b:\overline{T}(s(V))\to \overline{T}(s(V))
\]
such that $b^2=0$.
\end{defi}
\begin{rem}\normalfont
A coderivation $b$ of degree $+1$ on $\overline{T}(s(V))$ is determined by graded morphisms of degree $+1$
\[
b_i:s(V)^{\otimes i}\to s(V)
\]
for $i\ge 1$. Moreover the equation $b^2=0$ translates into the system of equations, for every $n\ge 1$ 
\begin{equation}\label{a1}
\sum_{n=r+t+s} b_{r+t+1}(Id^{\otimes^r}\otimes b_s \otimes Id^{\otimes^t})=0
\end{equation}
Consider the sequence of graded morphisms of degree $2-i$
\[
m_i:V^{\otimes i}\to V
\]
for $i\ge 1$ defined by the condition
\[
b_i=s\circ m_i\circ (s^{-1})^{\otimes i}
\]
then the equation (\ref{a1}) can be written in terms of $m_i$'s as
\begin{equation}\label{a2}
\sum_{n=r+t+s} (-1)^{sr+t} m_{r+t+1}(Id^{\otimes^r}\otimes m_s \otimes Id^{\otimes^t})=0
\end{equation}
Note that there are several ways of associating the morphisms $m_i$ to $b$. To those corresponds different signs convention in the equation (\ref{a2}). The sign convention adopted in this paper is similar to the one adopted by K.Lefèvre-Hasegawa in \cite{Has}. 
 \end{rem}
\begin{defi}[Strictly Unital $\mathcal{A}_∞$-algebra]
A strictly unital $\mathcal{A}_∞$-algebra is an $\mathcal{A}_∞$-algebra $V$ together with the choice of an element $1_V\in V$ of degree $0$, called strict unit, such that
\[
m_2(1_V\otimes v)=v=m_2(v\otimes 1_V)
\]
and for $n>2$, $1\le j\le n$
\[
m_n(v_1\otimes \dots \otimes v_{j-1}\otimes 1_V\otimes v_{j+1}\otimes \dots v_n)=0
\]
\end{defi}
It is easy to check that the choice of a strict unit for an $\mathcal{A}_∞$-algebra is unique.
\begin{defi}[Morphism of $\mathcal{A}_∞$-algebras]
Given two $\mathcal{A}_∞$-algebras $(V,b)$ and $(W,b')$, a morphism of $\mathcal{A}_∞$-algebras is a degree $0$ morphism of graded coalgebras
\[
h:\overline{T}(s(V))\to \overline{T}(s(W))
\]
such that
\[
b'\circ h=h\circ b
\]
\end{defi}
\begin{rem}\normalfont \label{rem1}
A morphism of $\mathcal{A}_∞$-algebras is determined by a sequence of graded morphisms of degree $0$
\[
h_i:s(V)^{\otimes i}\to s(W)
\]
for $i\ge 1$ and the equation $b'\circ h=h\circ b$ translate in the system of equations for $n\ge 1$
\[
\sum_{n=r+t+s} h_{r+t+1}(Id^{\otimes^r}\otimes b_s \otimes Id^{\otimes^t})=\sum_{\substack{1\le r\le n\\ i_1+\dots +i_r=n}} b'_{r}(h_{i_1}\otimes \dots \otimes h_{i_r})
\]
Define the sequence of graded morphisms of degree $1-i$
\[
f_i:V^{\otimes i}\to W
\]
for $i\ge 1$ by
\[
h_i=s'\circ f_i\circ (s^{-1})^{\otimes i}
\]
then the system of equations defined by (\ref{a2}) translates into the system of equations for $n\ge 1$
\[
\sum_{n=r+t+s} (-1)^{sr+t} f_{r+t+1}(Id^{\otimes^r}\otimes m_s \otimes Id^{\otimes^t})=\sum_{\substack{1\le r\le n\\ i_1+\dots +i_r=n}} (-1)^{\epsilon_r(i_1,\dots ,i_r)} m'_{r}(f_{i_1}\otimes \dots \otimes f_{i_r})
\]
where
\[
\epsilon_r(i_1,\dots ,i_r)=\sum_{2\le k\le r}\left( (1-i_k)\sum_{1\le l\le k-1} i_l \right)
\]
There is a strictly associative and unital composition of morphisms of  $\mathcal{A}_∞$-algebras. Namely, given $h:(V,b)\to (W,b')$ and $k:(W,b')\to (Z,b'')$ morphisms of $\mathcal{A}_∞$-algebras, their composition $k\circ h$ is the composition as graded morphisms. Moreover, if $f_i:V^{\otimes i}\to W$ are the graded morphisms associated to $h$ and $g_i:W^{\otimes i}\to Z$ associated to $k$, the composition $k\circ h$ has associated graded morphisms given by 
\[
e_k=\sum_{r=1}^k \sum_{i_1+\dots +i_r=k}(-1)^{\epsilon_r(i_1,\dots ,i_r)} g_{r}(f_{i_1}\otimes \dots \otimes f_{i_r})
\]
The unit is given by $Id_{V}$. Those data define the category $\mathcal{A}_{\infty}Alg(\mathbb{K})$ of $\mathcal{A}_{\infty}$-algebras over $\mathbb{K}$. 
\end{rem}
\begin{defi}[Morphism of strictly unital $\mathcal{A}_∞$-algebras]
A morphism of strictly unital $\mathcal{A}_∞$-algebras is a morphism of $\mathcal{A}_∞$-algebras
\[
f_i:V^{\otimes i}\to W
\]
such that
\[
f_1(1_V)=1_W
\]
and for $n>1$, $1\le j\le n$
\[
f_n(v_1\otimes \dots \otimes v_{j-1}\otimes 1_V\otimes v_{j+1}\otimes \dots v_n)=0
\]
\end{defi}
It is straightforward to check that strictly unitality is preserved under composition of morphisms of $\mathcal{A}_∞$-algebras and that $Id_{V}$  is strictly uniltal. 
\begin{ex}\normalfont
Recall that a differential graded algebra (dg-algebra) over a field $\mathbb{K}$ is a graded vector space $V$ over $\mathbb{K}$ endowed with a graded morphism of degree $0$, called product
\[
m: V\otimes V\to V
\]
and a graded morphism of degree $+1$, called differential
\[
d:V\to V
\]
such that 
\begin{align*}
&d^2=0\\
&d(v_1\cdot v_2)=d(v_1)\cdot v_2 + (-1)^{deg(v_1)}v_1\cdot d(v_2) \
\end{align*}
where $v_1\cdot v_2=m(v_1,v_2)$. Given a dg-algebra $V$, its cohomology algebra is the graded algebra
\[
H^{\star}(V)=\frac{Ker(d)}{Im(d)}
\]
with product induced by the product in $V$. A morphism of dg-algebras is a graded morphism of degree $0$
\[
f:V\to W
\]
such that
\begin{align*}
&d\circ f=f\circ d\\
&f(v_1\cdot v_2)=f(v_1)\cdot f(v_2) \,
\end{align*}
Composition of morphisms of dg-algebras is composition as graded morphisms. This composition defines the category $dgAlg(\mathbb{K})$ of dg-algebras over a field $\mathbb{K}$. Moreover, there is a faithful functor
\[
i:dgAlg(\mathbb{K})\to \mathcal{A}_{\infty}Alg(\mathbb{K})
\]
sending a dg-algebra $V$ in the $\mathcal{A}_{\infty}$-algebra with the same underlying graded vector space $V$ and with
\[
m_1=d,
m_2=m,
m_n=0, \quad \text{for $n>2$} 
\]
and a morphism of dg-algebras 
\[
f:V\to W
\]
in the morphism of  $\mathcal{A}_{\infty}$-algebras 
\[
i(f):i(V)\to i(W)
\]
with
\[
i(f)_1=f,
i(f)_n=0, \quad \text{for $n>1$} \
\]
A similar statement holds in the unital case.
\end{ex}
\newpage
We recall now the notion of $\mathcal{A}_∞$-category and of $\mathcal{A}_∞$-functor. 
\begin{defi}[$\mathcal{A}_∞$-category]
Let $\mathbb{K}$ be a field. An $\mathcal{A}_{\infty}$-category $\mathcal{A}$ is the data of:
\vspace{3 mm}
\begin{itemize}
\item A set of objects $Ob(\mathcal{A})$
\item For every pair of objects $x,y \in Ob(\mathcal{A})$ a $\mathbb{Z}$-graded vector space over $\mathbb{K}$ denoted by $Hom^{\bullet}_{\mathcal{A}}(x,y)$
\item For $n\ge 1$ and sequence of objects $x_0, x_1, \dots , x_n$, a graded map 
\[
m_n: Hom^{\bullet}_{\mathcal{A}}(x_{n-1}, x_n)\otimes \dots \otimes Hom^{\bullet}_{\mathcal{A}}(x_0, x_1)\to Hom^{\bullet}_{\mathcal{A}}(x_0, x_n)
\]
of degree $2-n$ such that, for every $n\ge 1$
\begin{equation}
\sum_{n=r+t+s} (-1)^{sr+t} m_{r+t+1}(Id^{\otimes^r}\otimes m_s \otimes Id^{\otimes^t})=0
\end{equation}
\item For every object $x \in Ob(\mathcal{A})$ a degree $0$ element $1_x\in Hom^{\bullet}_{\mathcal{A}}(x,x)$, called the identity at $x$, such that
\[
m_2(1_x\otimes a)=a=m_2(a\otimes 1_x)
\]
and for $n>2$, $1\le j\le n$
\[
m_n(a_1\otimes \dots \otimes a_{j-1}\otimes 1_x\otimes a_{j+1}\otimes \dots a_n)=0
\]
\end{itemize}
\end{defi}
\begin{defi}[$\mathcal{A}_∞$-functor]
Let $\mathcal{A}$ and $\mathcal{B}$ be two $\mathcal{A}_{\infty}$-categories. An $\mathcal{A}_{\infty}$-functor
\[
f: \mathcal{A}\to \mathcal{B}
\]
is the data of:
\vspace{3 mm}
\begin{itemize}
\item A map of sets $f_0:Ob(\mathcal{A})\to Ob(\mathcal{B})$
\item For $n\ge 1$ and sequence of objects $x_0, x_1, \dots , x_n$, a graded map 
\[
f_n: Hom^{\bullet}_{\mathcal{A}}(x_{n-1}, x_n)\otimes \dots \otimes Hom^{\bullet}_{\mathcal{A}}(x_0, x_1)\to Hom^{\bullet}_{\mathcal{B}}(f_0(x_0),f_0(x_n))
\]
of degree $1-n$ such that, for every $n\ge 1$
\[
\sum_{n=r+t+s} (-1)^{sr+t} f_{r+t+1}(Id^{\otimes^r}\otimes m_s \otimes Id^{\otimes^t})=\sum_{\substack{1\le r\le n\\ i_1+\dots +i_r=n}} (-1)^{\epsilon_r} m'_{r}(f_{i_1}\otimes \dots \otimes f_{i_r})
\]
where
\[
\epsilon_r=\epsilon_r(i_1,\dots ,i_r)=\sum_{2\le k\le r}\left( (1-i_k)\sum_{1\le l\le k-1} i_l \right)
\]
\item For every object $x \in Ob(\mathcal{A})$ 
\[
f_1(1_x)=1_{f_0(x)}
\]
and for $n>1$, $1\le j\le n$
\[
f_n(a_1\otimes \dots \otimes a_{j-1}\otimes 1_x\otimes a_{j+1}\otimes \dots a_n)=0
\]
\end{itemize}
\end{defi}
As for the case of morphisms of $\mathcal{A}_{\infty}$-algebras [Rem.\ref{rem1}], there is a well defined composition law for $\mathcal{A}_{\infty}$-functors defining the category of (small) $\mathcal{A}_{\infty}$-categories over a field $\mathbb{K}$, denoted by $\mathcal{A}_{\infty}Cat_{\mathbb{K}}$. 

\begin{ex}\normalfont
A differential graded category (dg-category) over a field $\mathbb{K}$ is a category enriched over the symmetric monoidal category \((Ch^{\bullet}(Vect_{\mathbb{K}}), \bigotimes, 1 )\) of cochain complexes of vector spaces over $\mathbb{K}$. Given a dg-category $\mathcal{D}$, its underlying $\mathbb{K}$-linear category, denoted by $un(\mathcal{D})$, has the same objects of $\mathcal{D}$ and vector space of morphisms between two objects $x,y$ given by
\[
Hom_{un(\mathcal{D})}(x,y)=Hom_{Ch^{\bullet}(Vect_{\mathbb{K}})}(\mathbb{K}, Hom^{\bullet}_{\mathcal{D}}(x,y))=Z^0(Hom^{\bullet}_{\mathcal{D}}(x,y))
\]
where $Z^{0}(Hom^{\bullet}_{\mathcal{D}}(x,y))$ is the set of $0$-cocycles of the complex $Hom^{\bullet}_{\mathcal{D}}(x,y)$. Given a dg-category $\mathcal{D}$, its homotopy category, denote by $H^0(\mathcal{D})$ is the $\mathbb{K}$-linear category with the same objects of $\mathcal{D}$ and vector space of morphisms between two objects $x,y$ given by
\[
Hom_{H^{0}(\mathcal{D})}(x,y)=H^0(Hom^{\bullet}_{\mathcal{D}}(x,y))
\]
In particular, composition and identity morphisms are compatible with cohomology. This construction  identifies two closed morphisms of degree $0$ if their difference is the differential of a morphism of degree $-1$. We recover the notion of dg-category considering $\mathcal{A}_{\infty}$-categories with $m_n=0$, for $n>2$, and the notion of dg-functor from the one of $\mathcal{A}_{\infty}$-functors with $f_n=0$, for $n>1$. In particular, there is faithful functor
\[
i:dgCat_{\mathbb{K}}\to \mathcal{A}_{\infty}Cat_{\mathbb{K}} 
\]  
where $dgCat_{\mathbb{K}}$ is the category of (small) differential graded categories over the field $\mathbb{K}$.
\end{ex}
\begin{ex}\normalfont
The category $Ch^{\bullet}(Vect_{\mathbb{K}})$ of cochain complexes over a field $\mathbb{K}$ has a dg-enrichment given by 
\[
Hom^{k}_{Ch^{\bullet}(Vect_{\mathbb{K}})}(A^{\bullet},B^{\bullet})=\bigoplus_{i\in \mathbb{Z}}Hom(A^i, B^{i+k})
\]
with differential 
\[
d(f_i^k)= f_{i+1}^k d_{A^i} + (-1)^{k+1} d_{B^{i+k}} f_i^k 
\] 
for $f_i^k\in Hom(A^i, B^{i+k})$. In particular, two morphisms of complexes $f^{\bullet}$ and $g^{\bullet}$ are identified in $H^{0}(Ch^{\bullet}(Vect_{\mathbb{K}}))$ if and only if they are chain homotopic. 
\end{ex}
\begin{ex}\label{ex1}\normalfont
Let $\mathcal{D}$ be a dg-category and $dgFun(\mathcal{D},Ch^{\bullet}(Vect_{\mathbb{K}}))$ be the category of dg-functors. Consider, for $k\in \mathbb{Z}$, the shift by $k$ functor on $Vect_{\mathbb{K}}$ 
\[
[k]: Ch^{\bullet}(Vect_{\mathbb{K}})\to Ch^{\bullet}(Vect_{\mathbb{K}})
\]
given on objects by 
\[
A^{\bullet}[k]^i=A^{k+i}
\] 
with differential 
\[
d[k]^i=d^{i+k}
\] 
\newpage Given a dg-functor $F\in dgFun(\mathcal{D},Ch^{\bullet}(Vect_{\mathbb{K}}))$, define its $k$-th shift by
\begin{equation}\label{sh}
F[k]=[k]\circ F
\end{equation}
This allows to define a dg-enrichment of the category $dgFun(\mathcal{D},Ch^{\bullet}(Vect_{\mathbb{K}}))$ given by
\[
Hom^{k}_{dgFun(\mathcal{D},Ch^{\bullet}(Vect_{\mathbb{K}}))}(F,G)=\{\eta: F\to G[k]\}
\]
where $\eta$ is a natural transformation of enriched functors. The differential in degree $k$ is given by the commutator
\[
d(\eta)=d \eta + (-1)^{k+1} \eta d
\]
\end{ex}
\newpage
\subsection{Simplicial nerve of an $\mathcal{A}_∞$-category and the small dg-nerve}
In this section we define a cosimplical $\mathcal{A}_∞$-category, generated by the standard simplicies and denoted by $\mathcal{A}_∞[\Delta^-]$. This construction allows to define a functor
\[
N_{\mathcal{A}_∞}: \mathcal{A}_∞Cat_{\mathbb{K}}\to SSet
\]
that we will call the simplicial nerve of an $\mathcal{A}_∞$-category. We prove that $N_{\mathcal{A}_∞}(\mathcal{A})$ is an $\infty$-category \cite{Lurie 1} for every $\mathcal{A}\in \mathcal{A}_∞Cat_{\mathbb{K}}$.
\begin{defi}
The $\mathcal{A}_∞$-category generated by the standard $n$-simplex, denoted by $\mathcal{A}_∞[\Delta^n]$, is the $\mathcal{A}_∞$-category defined by:
\vspace{3 mm}
\begin{itemize}
\item $Ob(\mathcal{A}_∞[\Delta^n])=\{0,1,\dots ,n\}$
\item For $0\le i,j\le n$
\[ 
Hom^{\bullet}_{\mathcal{A}_∞[\Delta^n]}(i, j) = \left\{
  \begin{array}{l l}
    \mathbb{K}\cdot (i,j) & i\le j\\
    \emptyset & i>j
  \end{array} \right.
\]
where deg\textnormal{((}i, j\textnormal{))}=$0$.
\item
Graded morphisms given by
\[ 
\left\{
  \begin{array}{l l}
    m_1=0 \\
    m_2((jk),(ij))=(ik), & \quad \text{for $i\le j\le k$}\\
    m_n=0, & \quad \text{for $n>2$}
  \end{array} \right.
\]
\end{itemize}
\end{defi}
Note that $\mathcal{A}_∞[\Delta^n]$ is in fact a dg-category. 
\begin{prop}
The construction $[n]\to \mathcal{A}_∞[\Delta^n]$ yields to a functor 
\[
\mathcal{A}_{\infty} [\Delta^{-}]: \Delta \to \mathcal{A}_∞ Cat_{\mathbb{K}}
\]
defining a cosimplicial $\mathcal{A}_∞$-category.
\end{prop}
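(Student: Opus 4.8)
The plan is to reduce the claim to the fact, already noted after the definition, that each $\mathcal{A}_\infty[\Delta^n]$ is a dg-category, so that the whole construction factors through the faithful embedding $i\colon dgCat_{\mathbb{K}}\to \mathcal{A}_\infty Cat_{\mathbb{K}}$. Indeed, $\mathcal{A}_\infty[\Delta^n]$ is nothing but the $\mathbb{K}$-linearization of the totally ordered set $[n]=\{0<1<\dots<n\}$, viewed as a category with a unique morphism $i\to j$ whenever $i\le j$: the complex $Hom^\bullet(i,j)$ is $\mathbb{K}\cdot(i,j)$ concentrated in degree $0$ with zero differential, composition $m_2$ sends generators to generators, and $(i,i)$ is the identity $1_i$. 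Thus I expect it to suffice to produce a functor $\Delta\to dgCat_{\mathbb{K}}$ and post-compose with $i$.

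First I would define, for each order-preserving map $\alpha\colon [m]\to[n]$ in $\Delta$, an $\mathcal{A}_\infty$-functor $\mathcal{A}_\infty[\alpha]\colon \mathcal{A}_\infty[\Delta^m]\to\mathcal{A}_\infty[\Delta^n]$ by setting $\alpha$ itself on objects, by $\mathcal{A}_\infty[\alpha]_1((i,j))=(\alpha(i),\alpha(j))$ on the degree-$0$ generators (which is well defined precisely because $\alpha$ is monotone, so $\alpha(i)\le\alpha(j)$ and the target generator exists), and by $\mathcal{A}_\infty[\alpha]_n=0$ for $n>1$. I would then check the three defining properties of an $\mathcal{A}_\infty$-functor: compatibility with $m_1$ is vacuous since $m_1=0$; compatibility with $m_2$ follows from $m_2\big((\alpha(j)\alpha(k))\otimes(\alpha(i)\alpha(j))\big)=(\alpha(i)\alpha(k))$ for $i\le j\le k$, again using monotonicity; and unitality holds because $\mathcal{A}_\infty[\alpha]_1(1_i)=(\alpha(i),\alpha(i))=1_{\alpha(i)}$ while the higher components vanish on units. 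In particular $\mathcal{A}_\infty[\alpha]$ is (the image under $i$ of) a genuine dg-functor.

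Next I would verify functoriality. On identities $\mathcal{A}_\infty[\mathrm{id}_{[n]}]$ is manifestly the identity. For composability, given $\alpha\colon[l]\to[m]$ and $\beta\colon[m]\to[n]$ I would compare $\mathcal{A}_\infty[\beta\circ\alpha]$ with $\mathcal{A}_\infty[\beta]\circ\mathcal{A}_\infty[\alpha]$ using the composition law of Remark \ref{rem1}. Because both factors have vanishing components in degrees $>1$, every term of the sum $e_k=\sum_{r}\sum_{i_1+\dots+i_r=k}(-1)^{\epsilon_r}g_r(f_{i_1}\otimes\dots\otimes f_{i_r})$ with $k>1$ forces either some $i_j>1$ (killing an $f_{i_j}$) or all parts equal to $1$, hence $r=k>1$ (killing $g_r$); so the composite again has only a linear part, equal to $(i,j)\mapsto(\beta\alpha(i),\beta\alpha(j))$, which coincides with $\mathcal{A}_\infty[\beta\circ\alpha]$ on objects and generators. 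Hence $\mathcal{A}_\infty[-]$ respects composition.

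I do not anticipate a genuine obstacle here: the content is entirely formal once one recognizes $\mathcal{A}_\infty[\Delta^n]$ as the linearized poset category $[n]$. The only point requiring minimal care is the bookkeeping in the composition formula, confirming that the \emph{linear, degree-$0$, no-higher-terms} shape of these functors is closed under composition; conceptually this is just the functoriality of the embedding $i\colon dgCat_{\mathbb{K}}\to\mathcal{A}_\infty Cat_{\mathbb{K}}$ together with the classical fact that the assignment sending $[n]$ to its poset category and then to its $\mathbb{K}$-linearization is functorial on $\Delta$. Assembling these observations yields the desired cosimplicial object $\mathcal{A}_\infty[\Delta^-]\colon\Delta\to\mathcal{A}_\infty Cat_{\mathbb{K}}$.
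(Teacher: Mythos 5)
Your proof is correct, and its substance coincides with the paper's: in both cases the induced $\mathcal{A}_\infty$-functors act by the monotone map on objects, send the generator $(i,j)$ to $(\alpha(i),\alpha(j))$, and have all higher components equal to zero. The difference is organizational. The paper defines the induced functors only on the cofaces $\delta_j^n$ and codegeneracies $\sigma_j^n$ (the generators of $\Delta$) and then asserts that functoriality --- which in that presentation means verifying the cosimplicial identities among these functors --- is straightforward; you instead define $\mathcal{A}_\infty[\alpha]$ for an arbitrary monotone $\alpha\colon[m]\to[n]$ and check identities and closure under composition directly from the $\mathcal{A}_\infty$ composition formula, so no cosimplicial identities ever need to be checked separately. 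Your further observation that the whole construction is the faithful embedding $i\colon dgCat_{\mathbb{K}}\to\mathcal{A}_\infty Cat_{\mathbb{K}}$ applied to the $\mathbb{K}$-linearization of the poset category $[n]$ makes functoriality conceptually automatic; the paper only gestures at this with the remark that $\mathcal{A}_\infty[\Delta^n]$ ``is in fact a dg-category.'' What the paper's presentation buys is the explicit formulas for $(\delta_j^n)_\star$ and $(\sigma_j^n)_\star$, which it uses immediately afterwards to compute the face and degeneracy maps of the nerve $N_{\mathcal{A}_\infty}(\mathcal{A})$; what yours buys is a self-contained functoriality argument and the reduction to the classical fact that poset linearization is functorial on $\Delta$.
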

\begin{proof}
Consider the standard coface and codegeneracy morphisms in $\Delta$
\begin{align*}
&\delta_j^n: [n-1] \to [n], & \quad \text{$0\le j\le n$}\\
&\sigma_j^n: [n] \to [n-1], & \quad \text{$0\le j\le n-1$} \,
\end{align*}
given  by
\[ 
\delta_j^n(k) = \left\{
  \begin{array}{l l}
     k, & 0\le k\le j-1\\
    k+1, & j\le k\le n-1
  \end{array} \right.
\]
\[ 
\sigma_j^n(k) = \left\{
  \begin{array}{l l}
     k, & 0\le k\le j\\
    k-1, & j+1\le k\le n
  \end{array} \right.
\]
Define the induced coface $\mathcal{A}_∞$-functors
\[
(\delta_j^n)_{\star}: \mathcal{A}_∞[\Delta^{n-1}] \to \mathcal{A}_∞[\Delta^n]
\]
by
\[ 
\left\{
  \begin{array}{l l}
    (\delta_j^n)_{\star,0}(k)=\delta_j^n(k)\\
    (\delta_j^n)_{\star,1}(i,k)=(\delta_j^n(i), \delta_j^n(k))\\
    (\delta_j^n)_{\star,p}=0, & \quad \text{$p>2$}
  \end{array} \right.
\]
and similarly the codegeneracy $\mathcal{A}_∞$-functors 
\[
(\sigma_j^n)_{\star}: \mathcal{A}_∞[\Delta^{n}] \to \mathcal{A}_∞[\Delta^{n-1}]
\]
by
\[ 
\left\{
  \begin{array}{l l}
    (\sigma_j^n)_{\star,0}(k)=\sigma_j^n(k)\\
    (\sigma_j^n)_{\star,1}(i,k)=(\sigma_j^n(i), \sigma_j^n(k))\\
    (\sigma_j^n)_{\star,p}=0, & \text{$p>2$}
  \end{array} \right.
\]
It is straightforward to check that this construction is functorial. 
\end{proof}
\vspace{3 mm}
\begin{defi}[Simplicial Nerve of an $\mathcal{A}_∞$-category]
Let $\mathcal{A}$ be an $\mathcal{A}_∞$-category. The simplicial nerve $N_{\mathcal{A}_∞}(\mathcal{A})$ is the simplicial set whose $n$-simplicies are given by
\[
N_{\mathcal{A}_∞}(\mathcal{A})_n=Hom_{\mathcal{A}_∞ Cat_{\mathbb{K}}}(\mathcal{A}_{\infty} [\Delta^n],\mathcal{A})
\]
and simplicial structure induced by applying the functor $Hom_{\mathcal{A}_∞ Cat_{\mathbb{K}}}(-,\mathcal{A})$ to the cosimplicial $\mathcal{A}_∞$-category $\mathcal{A}_{\infty} [\Delta^{-}]$.
 \end{defi}
\begin{rem}\normalfont
We give a more explicit description of what an $n$-simplex of the simplicial nerve is. Let $f\in N_{\mathcal{A}_∞}(\mathcal{A})_n$, then by definition
\[
f:\mathcal{A}_{\infty} [\Delta^n]\to \mathcal{A}
\]
is an $\mathcal{A}_{\infty}$-functor and hence it is given by a set theoretic function
\[
f_0:Ob(\mathcal{A}_{\infty} [\Delta^n])\to Ob(\mathcal{A})
\]
corresponding to the choice of $n+1$ object of $\mathcal{A}$
\[
x_i=f_0(i)\in Ob(\mathcal{A})
\]
and, for every $1\le k\le n$, a graded morphism of degree $1-k$
\[
f_k: Hom^{\bullet}_{\mathcal{A}_{\infty} [\Delta^n]}(x_{i_{k-1}}, x_{i_k})\otimes \dots \otimes Hom^{\bullet}_{\mathcal{A}_{\infty} [\Delta^n]}(x_{i_0}, x_{i_1})\to Hom^{\bullet}_{\mathcal{A}}(f_0(x_{i_0}),f_0(x_{i_k}))
\]
corresponding to the choice, for every $1\le k\le n$ and every string $0\le i_0<i_1<\dots <i_k\le n$, of an element
\[
f_{i_0\dots i_k}=f_k((i_{k-1}i_k)\otimes \dots \otimes (i_0i_1))\in Hom^{1-k}_{\mathcal{A}}(x_{i_0},x_{i_k})
\]
Moreover, if at least one the $i_k$'s is repeated, by strict unitality we have
\[ 
\left\{
  \begin{array}{l l}
    f_{i_0,i_0}=Id_{x_{i_0}}\\
   f_{i_0,\dots ,i_p,i_p,\dots ,i_k}=0, & \quad \text{for $2\le k\le n$}
  \end{array} \right.
\]
The system of equations, for $n\ge 1$
\[
\sum_{n=r+t+s} (-1)^{sr+t} f_{r+t+1}(Id^{\otimes^r}\otimes m_s \otimes Id^{\otimes^t})=\sum_{\substack{1\le r\le n\\ p_1+\dots +p_r=n}} (-1)^{\epsilon_r} m'_{r}(f_{p_1}\otimes \dots \otimes f_{p_r})
\]
can be written in terms of those data, for every $1\le k\le n$ and every string $0\le i_0<i_1<\dots <i_k\le n$, as
\begin{align*}
&m_1(f_{i_0\dots i_k})=\sum_{0<j<n}(-1)^{j-1} f_{i_0\dots \hat{i_j}\dots i_k} + \sum_{0<j<n}(-1)^{1+k(j-1)}m_2(f_{i_j\dots i_k},f_{i_0\dots i_j})+\\
&+\sum_{\substack{1\le r\le n\\ 0< j_1<\dots < j_{r-1}< n}} (-1)^{1+\epsilon_r} m_{r}(f_{i_{j_{r-1}}\dots i_k},\dots ,f_{i_0\dots i_{j_1}}) \,
\end{align*}
where
\[
\epsilon_r=\epsilon_r(j_1,\dots ,j_r)=\sum_{2\le k\le r} (1-j_k+j_{k-1}) j_{k-1} 
\]
with $j_n=n$. 
\end{rem}
\begin{rem}\normalfont
We compute the simplicial structure of $N_{\mathcal{A}_∞}(\mathcal{A})$. The $j$-th face map
\[
d_j^n:N_{\mathcal{A}_∞}(\mathcal{A})_n\to N_{\mathcal{A}_∞}(\mathcal{A})_{n-1}
\]
is computed by 
\[
d_j^n(f)=f\circ (\delta_j^n)_{\star}
\]
Recall that given $f$ and $g$ functors of $\mathcal{A}_∞$-categories, their composition is given by
\[
(f\circ g)_k=\sum_{r=1}^k \sum_{i_1+\dots +i_r=k}(-1)^{\epsilon_r(i_1,\dots ,i_r)} f_{r}(g_{i_1}\otimes \dots \otimes g_{i_r})
\]
In particular, if $g$ is a functor of dg-categories then the composition is given by
\[
(f\circ g)_k=f_{k}(g_{1}\otimes \dots \otimes g_{1})
\]
hence
\[
(f\circ (\delta_j^n)_{\star})_k=f_{k}((\delta_j^n)_{\star,1}\otimes \dots \otimes (\delta_j^n)_{\star,1})
\]
In particular, given $1\le k\le n$ and a string $0\le i_0<i_1<\dots <i_k\le n-1$, we find that
\[ 
d_j^n(f)_{i_0\dots i_k} = \left\{
  \begin{array}{l l}
     f_{i_0\dots i_{p-1}(i_p+1)\dots (i_k+1)}, & j\le i_p, 0\le p\le k\\
    f_{i_0\dots i_k}, & j> i_k
  \end{array} \right.
\]
Similarly, the $j$-th degeneracy map
\[
s_j^n:N_{\mathcal{A}_∞}(\mathcal{A})_{n-1}\to N_{\mathcal{A}_∞}(\mathcal{A})_n
\]
is given by
\[ 
s_j^n(f)_{i_0 i_1} = \left\{
  \begin{array}{l l}
     f_{(i_0-1)(i_1-1)}, & j\le i_0-1\\
    f_{i_0(i_1-1)}, & i_0<j< i_1-1 \\
    Id_{x_{i_0}}, & i_0=j, i_1=j+1 \\
    f_{i_0i_1}, & j\ge i_1
  \end{array} \right.
\]
and for $k\ge 2$
\[ 
s_j^n(f)_{i_0\dots i_k} = \left\{
  \begin{array}{l l}
     f_{(i_0-1)\dots (i_k-1)}, & j\le i_0-1\\
    f_{i_0\dots i_p(i_{p+1}-1)\dots (i_k-1)}, & i_p<j< i_{p+1}-1, 0<p<k \\
    0, & i_p=j, i_{p+1}=j+1 \\
    f_{i_0\dots i_k}, & j\ge i_k
  \end{array} \right.
\]
\end{rem}
\begin{rem}\normalfont
An $\mathcal{A}_{\infty}$-functor
\[
g:\mathcal{A}\to \mathcal{B}
\]
induces a map of simplicial sets
\[
(g)_{\star}: N_{\mathcal{A}_∞}(\mathcal{A})\to N_{\mathcal{A}_∞}(\mathcal{B})
\]
by 
\[
(g)_{\star}(f)=f\circ g
\]
for $f\in Hom_{\mathcal{A}_∞ Cat_{\mathbb{K}}}(\mathcal{A}_{\infty} [\Delta^n],\mathcal{A})$. More explicitly, if $1\le k\le n$ 	and $0\le i_0<i_1<\dots <i_k\le n$ we have 
\[
((g)_{\star}(f))_{i_0\dots i_k}=\sum_{r=1}^k \sum_{j_1+\dots +j_r=k}(-1)^{\epsilon_r(j_1,\dots ,j_r)} g_{r}(f_{i_{j_r+\dots +j_2}\dots i_k}, \dots, f_{i_{0}\dots i_{j_r}})
\]
In particular we have the following proposition
\end{rem}
\begin{prop} 
The simplicial nerve of an $\mathcal{A}_{\infty}$-category defines a functor
\[
N_{\mathcal{A}_∞}: \mathcal{A}_∞Cat_{\mathbb{K}}\to SSet
\]
\qed
\end{prop}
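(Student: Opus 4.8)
The plan is to recognize the construction as the standard corepresentable-simplicial-set functor attached to a cosimplicial object, and thereby to reduce functoriality to associativity and unitality of composition in $\mathcal{A}_{\infty}Cat_{\mathbb{K}}$, which we already know to be a category. By the previous Proposition the assignment $[n]\mapsto \mathcal{A}_{\infty}[\Delta^n]$ is a cosimplicial object $\mathcal{A}_{\infty}[\Delta^-]\colon \Delta \to \mathcal{A}_{\infty}Cat_{\mathbb{K}}$, so for each fixed $\mathcal{A}$ the collection $N_{\mathcal{A}_{\infty}}(\mathcal{A})_n = Hom_{\mathcal{A}_{\infty}Cat_{\mathbb{K}}}(\mathcal{A}_{\infty}[\Delta^n],\mathcal{A})$ is a simplicial set, its simplicial operators being precomposition with the coface and codegeneracy functors $(\delta^n_j)_{\star}$ and $(\sigma^n_j)_{\star}$. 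It remains to promote $\mathcal{A}\mapsto N_{\mathcal{A}_{\infty}}(\mathcal{A})$ to a functor, using the assignment $g\mapsto (g)_{\star}$ on morphisms introduced in the preceding Remark, where $(g)_{\star}$ is postcomposition with $g$.

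First I would check that, for every $\mathcal{A}_{\infty}$-functor $g\colon \mathcal{A}\to \mathcal{B}$, the map $(g)_{\star}$ is genuinely a morphism of simplicial sets, i.e. it commutes with all faces and degeneracies. This is immediate once one observes that $(g)_{\star}$ is postcomposition with $g$, while each simplicial operator is precomposition with some $\alpha_{\star}\colon \mathcal{A}_{\infty}[\Delta^m]\to \mathcal{A}_{\infty}[\Delta^n]$ induced by a morphism $\alpha$ of $\Delta$: for any $f\colon \mathcal{A}_{\infty}[\Delta^n]\to \mathcal{A}$ the two composites $g\circ(f\circ \alpha_{\star})$ and $(g\circ f)\circ \alpha_{\star}$ agree by associativity of composition in $\mathcal{A}_{\infty}Cat_{\mathbb{K}}$, which is exactly the naturality square $(g)_{\star}\circ \alpha^{\ast}=\alpha^{\ast}\circ (g)_{\star}$. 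As a cross-check one can match this against the explicit face and degeneracy formulas and the explicit expansion of $(g)_{\star}(f)$ recorded in the previous Remarks, but the abstract argument already settles it.

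Next I would verify the two functor axioms. Preservation of identities, $(Id_{\mathcal{A}})_{\star}=Id_{N_{\mathcal{A}_{\infty}}(\mathcal{A})}$, follows from unitality: $(Id_{\mathcal{A}})_{\star}(f)=Id_{\mathcal{A}}\circ f = f$ for every simplex $f$. Compatibility with composition, $(g'\circ g)_{\star}=(g')_{\star}\circ (g)_{\star}$ for $g\colon \mathcal{A}\to \mathcal{B}$ and $g'\colon \mathcal{B}\to \mathcal{C}$, follows again from associativity, since $(g'\circ g)_{\star}(f)=(g'\circ g)\circ f = g'\circ(g\circ f)=(g')_{\star}\big((g)_{\star}(f)\big)$. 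In other words, the whole statement is the instance, for $\mathcal{M}=\mathcal{A}_{\infty}Cat_{\mathbb{K}}$, of the general fact that the bifunctor $Hom_{\mathcal{M}}(-,-)\colon \mathcal{M}^{op}\times \mathcal{M}\to Set$, restricted in its first variable along the cosimplicial object $\mathcal{A}_{\infty}[\Delta^-]$ and then curried, produces a functor $\mathcal{M}\to SSet$.

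I do not expect a genuine obstacle: every step reduces to associativity and unitality of the composition of $\mathcal{A}_{\infty}$-functors. The only point that deserves care is precisely that this composition, given by the sign-laden formula
\[
(f\circ g)_k=\sum_{r=1}^{k}\ \sum_{i_1+\dots+i_r=k}(-1)^{\epsilon_r(i_1,\dots,i_r)}\, f_r\big(g_{i_1}\otimes \dots \otimes g_{i_r}\big),
\]
is strictly associative and unital, so that $\mathcal{A}_{\infty}Cat_{\mathbb{K}}$ is a bona fide category. This was already asserted when the category was introduced, so here it may simply be invoked rather than reproved.
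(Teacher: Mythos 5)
Your proposal is correct and matches the paper's (implicit) argument: the paper states this proposition with no proof beyond the preceding remark defining $(g)_{\star}$, the content being exactly your observation that $N_{\mathcal{A}_\infty}(-)=Hom_{\mathcal{A}_\infty Cat_{\mathbb{K}}}(\mathcal{A}_\infty[\Delta^-],-)$ is the corepresentable simplicial set attached to a cosimplicial object, so functoriality reduces to the strict associativity and unitality of composition of $\mathcal{A}_\infty$-functors, asserted when the category $\mathcal{A}_\infty Cat_{\mathbb{K}}$ was introduced. Your reading of $(g)_{\star}$ as postcomposition is also the right one (the paper's formula $(g)_{\star}(f)=f\circ g$ is diagram-order notation, as its explicit componentwise expansion confirms).
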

\vspace{3 mm}
\begin{defi}[Small dg-nerve]
Let $\mathcal{D}$ be a dg-category. The small dg-nerve of $\mathcal{D}$ is the $\mathcal{A}_{\infty}$-nerve of $i(\mathcal{D})$
\[
N_{dg}^{sm}(\mathcal{D})=N_{\mathcal{A}_∞}(i(\mathcal{D}))
\]
\end{defi}
The small dg-nerve makes functorial the construction of J.Lurie \citep[Chap. 3, \S 1.3.1]{Lurie 2}.
\[
N_{dg}^{sm}: dgCat_{\mathbb{K}}\to SSet
\]
We remark the fact that this construction has the right signs in the expression of the differential
\begin{equation}\label{dgeq}
d(f_{i_0\dots i_k})=\sum_{0<j<k}(-1)^{j-1} f_{i_0\dots \hat{i_j}\dots i_k} + \sum_{0<j<k}(-1)^{1+k(j-1)} f_{i_j\dots i_k}\cdot f_{i_0\dots i_j}
\end{equation}
in the sense that $d^2(f_{i_0\dots i_k})=0$. We recall now the notion of  $\infty$-category and prove that the simplicial nerve of an $\mathcal{A}_∞$-category is an $∞$-category. 
 \begin{defi}[$\infty$-category, \cite{Lurie 1}]
An $\infty$-category is a simplicial set $X$ such that, for any 0$<$ i $<$n and any map of simplicial sets $f:\Lambda^{n}_{i} \to X$, there exists an extension to the full n-simplex  $g:\Delta^{n} \to X$
\[
  \begin{tikzpicture}
    \def\x{1.5}
    \def\y{-1.2}
    \node (A1_1) at (1*\x, 1*\y) {$\Lambda_{i}^{n}$};
    \node (A2_1) at (2*\x, 1*\y) {$X$};
    \node (A1_2) at (1*\x, 2*\y) {$\Delta^{n}$};
   \path (A1_1) edge [right hook->] node [auto] {$\scriptstyle{i}$} (A1_2);
    \path (A1_1) edge [->] node [auto,swap] {$\scriptstyle{f}$} (A2_1);
    \path (A1_2) edge [->, dashed] node [auto,swap] {$\scriptstyle{g}$} (A2_1);
      \end{tikzpicture}
  \]
where $\Lambda^{n}_{i}$ is the $i$-th inner horn. This property is called left lifting property for inner horns.
\end{defi}
Given an  $\infty$-category $X$, the $0$-simplicies of $X$ should be thought as the objects of the $\infty$-category and the $n$-simplicies as $n$-morphisms. In particular, the inner horn filling property induces a weak composition law associative up to higher degree simplicies \citep[see][Chap. 1]{Lurie 1}. 
\begin{ex}\normalfont
Let $\mathcal{C}$ be a small category, its nerve $N_{Cat}(\mathcal{C})$ is the simplicial set given by
\[
N_{Cat}(\mathcal{C})_{0}=Ob(\mathcal{C})
\]
and for $n>0$
\[
N_{Cat}(\mathcal{C})_{n}=\{f_{i}: x_{i} \to x_{i+1}, x_i\in Ob({\mathcal{C}}), f_i\in Hom_{\mathcal{C}}(x_i,x_{i+1}), i=0,..,n-1\}
\]
one can easily prove that this is an $\infty$-category and that the nerve defines a functor
\[
N_{Cat}: Cat \to SSet
\]
whose essential image is characterized by those simplicial sets for whch
\[
X_n\simeq X_1 \underset{s,X_0,t} {\times} X_1\dots X_1 \underset{s,X_0,t} {\times} X_1
\]
where the right hand side is the $n$-th folded fibred product in $Sets$ taken over source and target maps of $X_{\bullet}$.
\end{ex}  
\begin{prop}
Given an $\mathcal{A}_{\infty}$-category $\mathcal{A}$, its simplicial nerve $N_{\mathcal{A}_{\infty}}(\mathcal{A})$ is an $\infty$-category
\end{prop}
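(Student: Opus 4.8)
The plan is to verify the inner horn filling condition directly by unwinding both sides in terms of the explicit combinatorial data computed in the preceding remarks. Fix $0 < i < n$ and a map $\sigma : \Lambda^n_i \to N_{\mathcal{A}_\infty}(\mathcal{A})$. An $n$-simplex of the nerve is precisely a strictly unital $\mathcal{A}_\infty$-functor $f : \mathcal{A}_\infty[\Delta^n] \to \mathcal{A}$, which by the explicit description amounts to a choice of objects $x_0,\dots,x_n$ together with elements $f_{j_0\dots j_k} \in Hom^{1-k}_{\mathcal{A}}(x_{j_0}, x_{j_k})$ for every increasing string, subject to the unitality constraints and the system of $\mathcal{A}_\infty$-functor equations rewritten in the displayed form
\[
m_1(f_{j_0\dots j_k}) = \sum_{0<l<k}(-1)^{l-1} f_{j_0\dots \hat{j_l}\dots j_k} + \sum_{\substack{1\le r\le k\\ 0<l_1<\dots<l_{r-1}<k}} (-1)^{1+\epsilon_r} m_r(f_{j_{l_{r-1}}\dots j_k},\dots,f_{j_0\dots j_{l_1}}).
\]
A map out of the horn $\Lambda^n_i$ is the data of all faces except the $i$-th, agreeing on overlaps; concretely this supplies every element $f_{j_0\dots j_k}$ whose index string is \emph{not} the full string $\{0,1,\dots,n\}$ and is not obtained by omitting only the vertex $i$. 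To extend $\sigma$ to an $n$-simplex I must produce the single missing top element $f_{0 1 \dots n} \in Hom^{1-n}_{\mathcal{A}}(x_0,\dots,x_n)$ so that the full functor equations hold.

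The key step is to simply \emph{define} $f_{01\dots n}$ by solving the $\mathcal{A}_\infty$-relation for $k=n$ on the left-hand side. The equation for the full string reads $m_1(f_{01\dots n}) = (\text{terms in lower } f)$, and since the higher structure maps $m_{r\ge 2}$ applied to the strings appearing on the right involve only \emph{proper} substrings—all of which are already supplied by the horn data—the entire right-hand side is an element of $Hom^{2-n}_{\mathcal{A}}(x_0,x_n)$ determined by $\sigma$. Here is where the inner condition $0<i<n$ is essential: the vertices $0$ and $n$ are both present, so the target $Hom^{\bullet}_{\mathcal{A}}(x_0,x_n)$ is well defined, and every proper face of the full simplex that is needed to assemble the right-hand side is among the faces $d_j$ with $j \neq i$ already furnished by the horn. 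I would first check that this right-hand side is an $m_1$-cycle, i.e.\ that applying $m_1$ to it yields zero; this is exactly the statement that the $k=n$ relation is compatible with the already-verified relations for $k<n$, and it follows formally from $m_1^2=0$ together with the lower functor equations, mirroring the computation $d^2=0$ recorded for the dg-nerve in equation~(\ref{dgeq}). Granting this, there is no cohomological obstruction issue at all: I do not need the cycle to be a boundary, I simply \emph{set} $f_{01\dots n}$ to be \emph{any} element whose image under $m_1$ equals the prescribed cycle—but in fact the cleanest route is to observe that the relation to be solved determines $m_1(f_{01\dots n})$, and one is free to choose $f_{01\dots n}$ to be zero, whereupon one must instead verify that the prescribed right-hand side already vanishes; since that is not automatic, the correct formulation is to solve for $f_{01\dots n}$ directly and check the equation holds by construction.

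The main obstacle, and the point deserving the most care, is confirming that no \emph{further} equation constrains $f_{01\dots n}$ beyond the single one I use to define it. Because $f_{01\dots n}$ carries the top index string, it appears on the right-hand side of the functor equation only for strings of length $n$ (as a proper substring it never arises), so the only relation involving it is the $k=n$ equation itself; all relations with $k<n$ are satisfied purely by the horn data. I would therefore organize the argument as: (i) record that the horn data determines all $f_J$ for proper $J$ and satisfies all functor equations of length $<n$; (ii) define $f_{01\dots n}$ by the length-$n$ equation; (iii) verify the compatibility ($m_1$-cycle) condition via the $d^2=0$-style bookkeeping; and (iv) confirm strict unitality is preserved, which is immediate since the new top element has no repeated indices and the unit constraints only concern strings with repetitions. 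The sign bookkeeping in step (iii) is the genuinely delicate part, but it is the same computation that validates equation~(\ref{dgeq}) in the dg case, now carried out with the full tower of $m_r$'s present.
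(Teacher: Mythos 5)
Your proposal has a genuine gap, and it appears at the very first step: a horn $\Lambda^n_i \to N_{\mathcal{A}_\infty}(\mathcal{A})$ is missing \emph{two} pieces of data, not one. As you yourself note when describing the horn, the strings not supplied by $\sigma$ are the full string $\{0,\dots,n\}$ \emph{and} the string $\{0,\dots,\hat{i},\dots,n\}$ obtained by omitting only $i$; a filling must therefore produce both $f_{01\dots n}$ and $f_{0\dots\hat{i}\dots n}$. You then contradict this by declaring that the ``single missing top element'' $f_{01\dots n}$ is all that must be produced. The mistake propagates: the right-hand side of the $k=n$ functor equation contains the codimension-one face sum $\sum_{0<j<n}(-1)^{j-1}f_{0\dots\hat{j}\dots n}$, whose $j=i$ term is exactly the missing face, so that right-hand side is \emph{not} determined by the horn data, contrary to your claim. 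Moreover, your mechanism for producing $f_{01\dots n}$ does not work even if the right-hand side were known: the equation prescribes $m_1(f_{01\dots n})$, and $m_1$ is not invertible, so ``solving for $f_{01\dots n}$ directly'' requires the prescribed cycle to be an $m_1$-boundary --- precisely the cohomological obstruction you claim to avoid. Your second paragraph visibly circles this problem (set it to zero? then the right-hand side must vanish, ``which is not automatic'') without ever resolving it.

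The paper's proof uses the two missing slots in a complementary way, and this is where the inner-horn hypothesis really enters. One sets $f_{0\dots n}=0$, so the $k=n$ equation becomes $0=(-1)^{i-1}f_{0\dots\hat{i}\dots n}+(\text{horn data})$: the unknown face enters this equation \emph{linearly} with sign coefficient $\pm 1$, because every other term --- the faces $f_{0\dots\hat{j}\dots n}$ with $j\neq i$, and the composition terms $m_{r}(f_{j_{r-1}\dots n},\dots,f_{0\dots j_1})$, which involve only interval substrings $\{a,a+1,\dots,b\}$ --- lies in the horn when $0<i<n$ (every proper interval omits $0$ or $n$, and both differ from $i$). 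Hence one simply \emph{defines} $f_{0\dots\hat{i}\dots n}$ by that formula; nothing has to be inverted through $m_1$. For an outer horn this breaks down because the missing face $\{1,\dots,n\}$ or $\{0,\dots,n-1\}$ is itself an interval and appears inside the nonlinear composition terms --- that, and not the well-definedness of $Hom^{\bullet}_{\mathcal{A}}(x_0,x_n)$, is the true role of the hypothesis $0<i<n$. What then remains to check (and what your step (iii) should become) is that the functor equation for the string $\{0,\dots,\hat{i},\dots,n\}$ holds for the face so defined; this is the $d^2=0$-style verification with the full tower of $m_r$'s present, which the paper leaves implicit.
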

\begin{proof}
Fix $n>0$ and $0<p<n$ and a morphism of simplicial sets $\gamma: \Lambda_p^n \to N_{\mathcal{A}_{\infty}}(\mathcal{A})$. Such morphism can be identified with an $n$-simplex in $N_{\mathcal{A}_{\infty}}(\mathcal{A})$
\[
(\{x_{i_0}\}_{0\le i_0\le n}, \{f_{i_0i_1}\}_{0\le i_0\le i_1\le n}, \dots, \{f_{0\dots n}\})
\]
where the components $f_{0\dots n}$ and $f_{0\dots \hat{p} \dots n}$ are not given. The data $f_{0\dots n}=0$ and
\begin{align*}
&f_{0\dots \hat{p} \dots n}= \sum\limits_{0<j<n, j\ne p} (-1)^{j-1+p} f_{0\dots \hat{j}\dots n} + \sum\limits_{0<j<n} (-1)^{1+n(j-1)+p} f_{j\dots n}\circ f_{0\dots j}+\\
&+\sum_{\substack{1\le r\le n\\ 0< j_1<\dots < j_{r-1}< n}} (-1)^{1+\epsilon_r(j_1,\dots ,j_{r-1})+p} m_{r}(f_{i_{j_{r-1}}\dots i_k},\dots ,f_{i_0\dots i_{j_1}}) \,
\end{align*}
provide a filling for $\gamma$.
\end{proof}
\newpage
There is an equivalent way to define $\infty$-categories as categories enriched over the symmetric monoidal category of simplicial sets.
 \begin{defi}[Simplicial category]
A simplicial category is a category enriched over the symmetric monoidal category \((SSet, \times, pt )\) of simplicial sets, where the monoidal structure is the pointwise cartesian product of simplicies. 
\end{defi}
As for the case of dg-categories, when can recover the set of morphisms in the underlying category of a simplicial category $\mathcal{C}$ by
\[
Hom_{\mathcal{C}}(x,y)=Hom_{SSet}(pt, Map_{\mathcal{C}}(x,y))=Map_{\mathcal{C}}(x,y)_{0}
\]
Simplicial categories can be related to simplicial sets. Namely, J.Lurie \citep[Chap. 2, \S 2.2]{Lurie 1} proves that there is a pair of adjoint functors
\begin{equation}\label{adj}
\begin{tikzpicture}
\matrix (m) [matrix of math nodes, row sep=2em, column sep=3pc,
  text width=2.2pc, text height=1pc, text depth=.5pc] { 
   SCat & SSet \\
  }; 
\path[<-] 
(m-1-1.15) edge node[above] {\tiny $\mathcal{C}[-]$}  (m-1-2.165)
(m-1-2.-165) edge node[below] {\tiny $ N_{SCat}$}  (m-1-1.-15);
\end{tikzpicture}
\end{equation}
More precisely, there exist model structures on both sides of (\ref{adj}) with respect to which this adjunction extends to a Quillen adjunction. Fibrant simplicial sets for this model structure, called the Joyal model structure, are $\infty$-categories. Fibrant simplicial categories are simplicial categories for which all the mapping spaces are Kan complexes. We refer to \citep[Chap. 1]{Lurie 1} for a more detailed discussion.
\begin{rem}[Homotopy category of an $\infty$-category] \normalfont
Given an $\infty$-category $X$, its homotopy category $h(X)$ is the category with objects the 0-simplicies of $X$ and set of morphisms given by
\[
Hom_{h(X)}(x,y)=\pi_{0}(Map_{\mathcal{C}[X]}(x,y))
\]
We refer to \citep[Chap. 1]{Lurie 1} for more details about the construction of the homotopy category. As an example, it is easy to check that if $\mathcal{D}$ is a dg-category, then there is an isomorphism $h(N_{dg}^{sm}(\mathcal{D}))\simeq H^0(\mathcal{D})$.
\end{rem}
\newpage
\section{The big dg-nerve of a dg-category}
We saw in the previous section how to associate to a dg-category $\mathcal{D}$ an $\infty$-category, the small dg-nerve $N_{dg}^{sm}(\mathcal{D})$. We present in this section a different $\infty$-category associated to a dg-category, called the big dg-nerve. J.Lurie in \cite{Lurie 2} proves that those are equivalent $\infty$-category. We provied a more detailed description of how to construct such equivalence.
\subsection{The Dold-Kan correspondence}
We recall a classical tool in homological algebra that allows to compare complexes in an abelian category $\mathcal{A}$ with simplicial objects in $\mathcal{A}$. 
\begin{thm}
Let $\mathcal{A}$ be an abelian category, then there is an equivalence of categories, called Dold-Kan Correspondence
\[
DK: Ch_{\bullet}^{\ge0}(\mathcal{A}) \to S(\mathcal{A})
\]
between the category of positively graded chain complexes in $\mathcal{A}$ and the category of simplicial objects in $\mathcal{A}$. Its inverse equivalence
\[
N: S(\mathcal{A}) \to Ch_{\bullet}^{\ge0}(\mathcal{A})
\]
is called the normalized chain complex associated to a simplical object of $\mathcal{A}$. Moreover, under this equivalence, homology and homotopy groups are preserved. 
\begin{align*}
&\pi_{i}(DK(A_{\bullet})) \simeq H_{i}(A_{\bullet})\\
&H_{i}(N(X_{\bullet})) \simeq \pi_{i}(X_{\bullet}) \,
\end{align*}
for $i\ge 0$.
\qed
\end{thm}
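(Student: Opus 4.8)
The plan is to construct the inverse functor $N$ explicitly, produce a candidate inverse $DK$, and show that they are mutually inverse by means of a direct–sum decomposition of each $X_n$ indexed by the surjections out of $[n]$ in $\Delta$. First I would fix notation for a simplicial object $X_{\bullet}\in S(\mathcal{A})$: face maps $d_i:X_n\to X_{n-1}$ and degeneracies $s_i:X_n\to X_{n+1}$ satisfying the simplicial identities. I define the normalized complex by
\[
N(X)_n=\bigcap_{i=0}^{n-1}\ker\bigl(d_i:X_n\to X_{n-1}\bigr),
\]
with differential $\partial=(-1)^n d_n$ restricted to $N(X)_n$; the simplicial identities show $\partial^2=0$ and that $\partial$ maps $N(X)_n$ into $N(X)_{n-1}$, so $N(X)_{\bullet}\in Ch^{\ge 0}_{\bullet}(\mathcal{A})$, and the assignment is manifestly functorial. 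For the inverse I would set $DK(C)_n=\bigoplus_{\sigma}C_k$, the sum ranging over all surjections $\sigma:[n]\twoheadrightarrow[k]$ in $\Delta$, and define its simplicial structure maps through the unique epi–mono factorization of morphisms in $\Delta$: given $\theta:[m]\to[n]$, on the summand indexed by $\sigma$ one factors $\sigma\theta=\mu\varepsilon$ with $\varepsilon$ epi and $\mu$ mono, and sends $C_k$ into the summand of $DK(C)_m$ indexed by $\varepsilon$ by the identity if $\mu=\mathrm{id}$, by $(-1)^k\partial$ if $\mu$ is the last coface $\delta^k$, and by $0$ otherwise.

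The crux, and the step I expect to be the main obstacle, is the decomposition lemma: for every simplicial object $X$ and every $n$ the structure maps induce an isomorphism
\[
\bigoplus_{\sigma:[n]\twoheadrightarrow[k]}\sigma^{*}\bigl(N(X)_k\bigr)\;\xrightarrow{\ \sim\ }\;X_n.
\]
I would prove this by induction on $n$, first splitting off the degenerate part $D(X)_n=\sum_{i}\mathrm{im}(s_i)$ and establishing $X_n=N(X)_n\oplus D(X)_n$; the heart is a careful manipulation of the simplicial identities (the relations $d_i s_j=s_{j-1}d_i$ and their companions) to build an explicit idempotent onto $N(X)_n$ and to recognize $D(X)_n$ as the sum of the images of the lower-dimensional normalized pieces under the degeneracy operators. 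Verifying that this sum is \emph{direct}, and that it is preserved by all the structure maps, is precisely where the combinatorics of $\Delta$ — uniqueness of epi–mono factorization and the attendant Eilenberg–Zilber-type bookkeeping, signs included — must be handled with care.

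Granting the decomposition, the equivalence follows formally. By construction $N(DK(C))_n$ picks out exactly the summand indexed by the identity surjection, namely $C_n$, and one checks the induced differential is the given $\partial$; this yields a natural isomorphism $N\circ DK\cong\mathrm{Id}_{Ch}$. Conversely, the decomposition lemma is precisely the assertion $DK(N(X))\cong X$, natural in $X$, once one verifies that the reconstructed face and degeneracy maps agree with those of $X$ under the isomorphism — a verification that reduces, via the epi–mono factorization, to the simplicial identities already invoked. With both natural isomorphisms in hand, $N$ and $DK$ are mutually inverse equivalences of categories.

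Finally, for the comparison of invariants I would recall that the homotopy groups of a simplicial object in $\mathcal{A}$ are \emph{defined} as $\pi_i(X):=H_i(N(X))$, so the identity $H_i(N(X_{\bullet}))\simeq\pi_i(X_{\bullet})$ holds by definition; the companion statement $\pi_i(DK(A_{\bullet}))\simeq H_i(A_{\bullet})$ then follows at once from $N\circ DK\cong\mathrm{Id}$, which identifies $H_i(N(DK(A)))\cong H_i(A)$. If instead homotopy is defined through the unnormalized Moore complex $(C(X),\sum(-1)^i d_i)$, I would first show that the inclusion $N(X)\hookrightarrow C(X)$ is a chain homotopy equivalence by exhibiting the degenerate subcomplex $D(X)$ as contractible, which gives the same conclusion.
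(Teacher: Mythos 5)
The paper contains no proof to compare against: the Dold--Kan theorem is stated there as a classical result (the statement is closed off immediately with a qed mark), and the text that follows merely recalls the formulas for $N$ and $DK$ for later use. Your outline is the standard textbook proof --- essentially Weibel's, and Weibel is in the paper's bibliography --- and it is correct in strategy; moreover your constructions agree exactly with the ones the paper recalls, including the sign conventions: the differential $(-1)^n d_n$ on $N(X_{\bullet})$, the summands of $DK(C)_n$ indexed by surjections $[n]\twoheadrightarrow[k]$, and the structure maps defined through epi--mono factorization in $\Delta$, acting by $\mathrm{Id}$, by $(-1)^k\partial$ when the mono is the last coface, and by $0$ otherwise. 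The one place where your write-up is a sketch rather than a proof is the one you flag yourself: the direct-sum decomposition $X_n\cong\bigoplus_{\sigma:[n]\twoheadrightarrow[k]}\sigma^{*}\bigl(N(X)_k\bigr)$. That lemma is true, and your proposed route (split off the degenerate part $D(X)_n$, induct on $n$, use uniqueness of epi--mono factorization to get directness and compatibility with the structure maps) is the standard one, but essentially all of the content of Dold--Kan lives in that step, so a complete proof must actually carry out the induction rather than gesture at it. Your handling of the homotopy statement is also appropriate: for a simplicial object of a general abelian category, $\pi_i$ is defined as $H_i$ of the normalized complex, so that half is definitional, and your fallback --- showing the degenerate subcomplex $D(X_{\bullet})\subseteq C(X_{\bullet})$ is contractible, hence the normalized and unnormalized Moore complexes are chain homotopy equivalent --- correctly covers the alternative convention.
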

For future use, we recall the construction of the functors $N$ and $DK$. First, if $X_{\bullet}$ is a simplicial object in $\mathcal{A}$, then its normalized chain complex is given by
\[
N(X_{\bullet})_{n}=\bigcap_{i=0,\dots,n-1} Ker(d_{i}: X_{n} \to X_{n-1})
\]
with differential $d_{n}=(-1)^n d_{n}^n$. It is possible to prove that there is a natural isomorphism of functors
\[
N(X_{\bullet}) \simeq \frac{C(X_{\bullet})}{D(X_{\bullet})}
\]
where $C(X_{\bullet})_{n}=X_{n}$ and $D(X_{\bullet})_{n}=\sum\nolimits_{i=0}^n \sigma_{i}(X_{n-1})$ with differential given by $d_{n}=\sum\nolimits_{i=0}^n (-1)^{i} d_{i}^n$. Second, if $A_{\bullet}$ is a chain complex in $\mathcal{A}$, then
\[
DK(A_{\bullet})_{n}=\bigoplus_{p \le n} \bigoplus_{\eta : [n] \twoheadrightarrow [p]} A_{p}[\eta]
\]
where $A_{p}[\eta]=A_{p}$. If $\theta: [m]\to [n]$ is a morphism in $\Delta$, the induced map 
\[
\theta^{\ast}:DK(A_{\bullet})_{n} \to DK(A_{\bullet})_{m}
\] 
is given, on the component associated to $\eta : [n] \twoheadrightarrow [p]$, by considering an epi-mono factorization of the morphism $\eta \circ \theta: [m] \to [p]$ 
\[
\eta \circ \theta = \rho \circ \epsilon
\] 
where $\epsilon : [m] \twoheadrightarrow [q]$ and $\rho : [q] \hookrightarrow [p]$ and by taking the composition 
\[
 A_{p}[\eta] \to  A_{q}[\epsilon] \hookrightarrow DK(A_{\bullet})_{m}
\]
where the first map is 
\[
\begin{cases} (-1)^p \partial_{p}, &\mbox{if } q=p-1, \rho=d_{p}^p \\
Id_{A_p}, & \mbox{if } \rho=Id_q\\ 
0, &\mbox{otherwise } . 
\end{cases}  
\]
\begin{ex}\normalfont
Low degree terms of $DK(A_{\bullet})$ are given by
\begin{align*}
&DK(A_{\bullet})_0=A_0\\
&DK(A_{\bullet})_1=A_0 \oplus A_1 \\
&DK(A_{\bullet})_2=A_0 \oplus A_1^1 \oplus A_1^2 \oplus A_2 \, .
\end{align*}
with face maps given by
\begin{align*}
&d_0^1(a_0,a_1)=a_0\\
& d_1^1(a_0,a_1)=a_0-d_1(a_1) \,
\end{align*}
and
\begin{align*}
&d_0^2(a_0,a_1^1,a_2^1,a_2)=(a_0,a_1^2)\\
&d_1^2(a_0,a_1^1,a_2^1,a_2)=(a_0,a_1^1+a_1^2)\\
& d_2^2(a_0,a_1^1,a_2^1,a_2)=(a_0-d_1(a_1^2), a_1^1 + d_2(a_2)) \,
\end{align*}
We remark the fact that, if $\bar{a}\in DK(A_{\bullet})_{n}$ and 
\[
\pi_n:DK(A_{\bullet})_{n}\to A_{n}
\]
is the projection onto the highest degree component. Then, by construction, the following equation holds:
\begin{equation}\label{DK}
d_n(\pi_n(\bar{a}))=\sum_{j=0}^n (-1)^j \pi_{n-1}(d_j^n(\bar{a}))
\end{equation} 
\end{ex}
\begin{rem}\normalfont \label{mon}
 Let $\mathbb{K}$ be a commutative ring and consider the abelian category $\mathcal{A}=Mod_{\mathbb{K}}$ of $\mathbb{K}$-modules. Dold-Kan correspondence provides an equivalence of categories between $Ch_{\bullet}^{\ge0}(Mod_{\mathbb{K}})$ and $S(Mod_{\mathbb{K}})$. We now recall the relations of such equivalence with the monoidal structure. Namely, both categories are endowed with a symmetric monoidal structure with tensor product functor given, on $Ch_{\bullet}^{\ge0}(Mod_{\mathbb{K}})$ by
\[
(A_{\bullet} \bigotimes B_{\bullet})_{n}= \bigoplus_{p+q=n} A_{p} \otimes B_{q}
\]
with differential given by the graded commutative Libeniz 
\[
d(a \otimes b)= d(a) \otimes b + (-1)^{deg(a)} a \otimes d(b)
\]
and on the category of simplicial $\mathbb{K}$-modules by
\[
(X_{\bullet} \times Y_{\bullet})_{n}=  X_{n} \otimes Y_{n}
\]
Moreover, there are natural morphisms
\[
\nabla_{(X_{\bullet},Y_{\bullet})}: N(X_{\bullet}) \bigotimes N(Y_{\bullet}) \to N(X_{\bullet} \times Y_{\bullet}) 
\]
\[
\Delta_{(X_{\bullet},Y_{\bullet})}:  N(X_{\bullet} \times Y_{\bullet}) \to N(X_{\bullet}) \bigotimes N(Y_{\bullet}) 
\]
where $\nabla_{(X_{\bullet},Y_{\bullet})}$ is called the Eilenberg-Zilber map given by 
\[
\nabla_{(X_{\bullet},Y_{\bullet})}(x_{p} \otimes y_{q})=\sum_{(\mu,\nu)} sign(\mu,\nu) s_{\nu}(x_{p}) \otimes s_{\mu}(y_{q})
\]
where the sum is taken over all the $(p,q)$-shuffles and if $(\mu,\nu)=(\mu_1,\dots,\mu_p,\nu_1,\dots,\nu_q)$ then $s_{\mu}=s_{\mu_p} \dots s_{\mu_1}$ and similarly $s_{\nu}=s_{\nu_q} \dots s_{\nu_1}$. $\Delta_{(X_{\bullet},Y_{\bullet})}$ is called the Alexander-Whitney map and is given by
\begin{equation}\label{AW}
\Delta_{(X_{\bullet},Y_{\bullet})}(x_{n} \otimes y_{n})=\bigoplus_{p+q=n} (-1)^{np+1} \tilde{d}^p(x_{n}) \otimes d^{q}_{0}(y_{n})
\end{equation}
where $\tilde{d}^p$ is the map induced by the morphism in $\Delta$, $\tilde{\delta}^p: [p] \to [n]$ given by $\tilde{\delta}^p(i)=i$, and $ d^{q}_{0}$ is induced by $ \delta^{q}_{0}: [q] \to [n]$ given by $\delta^{q}_{0}(i)=p+i$. 

The Eilenberg-Zilber map is a symmetric lax monoidal transformation. The Alexander-Whitney map is just an oplax monoidal transformation. It easy to check that the composition $\Delta_{(X_{\bullet},Y_{\bullet})}\circ \nabla_{(X_{\bullet},Y_{\bullet})}$ is the identity map and that $\nabla_{(X_{\bullet},Y_{\bullet})} \circ \Delta_{(X_{\bullet},Y_{\bullet})}$ is homotopic to the identity  \cite{MacLane}. The natural isomorphisms of functors $\epsilon: DK \circ N \to Id$ and $\eta: N \circ DK \to Id$ induces natural transformations
\[
\tilde{\nabla}_{(A_{\bullet},B_{\bullet})}: DK(A_{\bullet}) \times DK(B_{\bullet}) \to DK(A_{\bullet} \otimes B_{\bullet}) 
\]
\[
\tilde{\Delta}_{(A_{\bullet},B_{\bullet})}:  DK(A_{\bullet} \otimes B_{\bullet}) \to DK(A_{\bullet}) \times DK(B_{\bullet}) 
\]
In particular, $\tilde{\Delta}$ makes the functor $DK$ an oplax monoidal functor \cite{Schw}. 
\end{rem}
Consider now the functor 
\[
op: Ch^{\bullet}(Vect_{\mathbb{K}}) \to Ch_{\bullet}(Vect_{\mathbb{K}}) 
\]
sending a cochain complex of vector spaces in the chain complex $A^{op}_{n}=A^{-n}$ and the truncation functor
\[
\tau_{\ge0}: Ch_{\bullet}(Vect_{\mathbb{K}}) \to Ch_{\bullet}^{\ge0}(Vect_{\mathbb{K}})
\] 
defined by 
\[
\tau_{\ge0}(A)_n = \begin{cases} 0 &\mbox{if } n < 0 \\
Ker(d_0) & \mbox{if } n=0\\ A_n &\mbox{if } n > 0. \end{cases}  
\]
Both functors are clearly compatible with the monoidal structure. In particular, given a morphism of chain complexes
\[
A^{\bullet}\otimes B^{\bullet}\to C^{\bullet}
\]
the compatibility of those functors and of the functor $DK$ with the monoidal structure allows to define a unique map of simplicial sets
\[
DK(\tau_{\ge0}((A^{\bullet})^{op}))\otimes DK(\tau_{\ge0}((B^{\bullet})^{op}))\to DK(\tau_{\ge0}((C^{\bullet})^{op}))
\]
\begin{defi}[\cite{Lurie 2}]
Let $\mathcal{D}$ be a dg-category. $\mathcal{D}_{\Delta}$ is the simplicial category with the same objects as $\mathcal{D}$ and with simplicial set of morphisms given by
\[
Map_{\mathcal{D}_{\Delta}}(x,y):= DK(\tau_{\ge0}(Hom^{\bullet}_{\mathcal{D}}(x,y)^{op}))
\]
The composition law and identity 
\[
\circ : Map_{\mathcal{D}_{\Delta}}(x,y) \otimes Map_{\mathcal{D}_{\Delta}}(y,z) \to Map_{\mathcal{D}_{\Delta}}(x,z)
\]
\[
Id: \ast \to Map_{\mathcal{D}_{\Delta}}(x,x)
\]
are induced by the composition law and identity in $\mathcal{D}$ as pointed in \normalfont [Rem.\ref{mon}] 
\end{defi}
\newpage
\subsection{Cubical interpretation of the simplicial nerve of a simplicial category} 
Recall that, according to \citep[Chap. 2, \S 2.2]{Lurie 1}, there is a pair of adjoint functors
\[
\begin{tikzpicture}
\matrix (m) [matrix of math nodes, row sep=2em, column sep=3pc,
  text width=2.2pc, text height=1pc, text depth=.5pc] { 
   SCat & SSet \\
  }; 
\path[<-] 
(m-1-1.15) edge node[above] {\tiny $\mathcal{C}[-]$}  (m-1-2.165)
(m-1-2.-165) edge node[below] {\tiny $ N_{SCat}$}  (m-1-1.-15);
\end{tikzpicture}
    \]
In this section we provide a geometric description via cubes of the mapping spaces of the simplicial category  $\mathcal{C}[\Delta^n]$. This description will be used in the next section to define an equivalence of $\infty$-categories between $N_{SCat}(\mathcal{D}_{\Delta})$ and $N^{sm}_{dg}(\mathcal{D})$, where $\mathcal{D}$ is any dg-category. 

The simplicial category $\mathcal{C}[\Delta^n]$ is a fattened version of the ordinal category on $n$ elements $[n]$ and has objects 
\[
Ob(C[\Delta^n])=\{0,1,\dots,n\}
\] 
and mapping spaces
\[
Map_{C^{\Delta^n}}(i,j)= \begin{cases} \emptyset &\mbox{if } i>j \\
N_{Cat}(P_{i,j}) & \mbox{if } i \le j. \end{cases}  
\]
where $P_{i,j}$ is the category associated to the partially ordered collection of subsets of the ordered set $\{0,1,\dots,n\}$ having minimum $i$ and maximum $j$. The composition is induced by taking union of those subsets
\[
\cup: P_{i,j} \times P_{j,k} \to P_{i,k}
\] 
and identity given by $Id_{P_{i,i}}=\{i\}$. 
\begin{ex}\normalfont
We describe the simplicial category $C[\Delta^n]$ for low values of $n$. $C[\Delta^1]$ has as set of objects $\{0,1\}$ and mapping spaces given by
\begin{align*}
&Map_{C[\Delta^1]}(0,0) \simeq Map_{C[\Delta^1]}(1,1) \simeq \{Id\} \simeq \ast \\
&Map_{C[\Delta^1]}(0,1) \simeq \{\{0,1\}\} \simeq \ast \, 
\end{align*}
The $2$-dimensional simplicial category $C[\Delta^2]$ has set of objects $\{0,1,2\}$ and mapping spaces 
\begin{align*}
&Map_{C[\Delta^2]}(0,0) \simeq Map_{C[\Delta^2]}(1,1) \simeq Map_{C[\Delta^2]}(2,2) \simeq \ast \\
&Map_{C[\Delta^2]}(0,1) \simeq \{\{0,1\}\} \simeq \ast \\
&Map_{C[\Delta^2]}(1,2) \simeq \{\{1,2\}\} \simeq \ast \\
&Map_{C[\Delta^2]}(0,2) \simeq \Delta^1 \, 
\end{align*}
where the non trivial $1$-simplex correspond to the morphism in $P_{0,2}$, $\{0,2\} \hookrightarrow \{0,1,2\}$. Moreover, the composition law gives the identity
\[
\{1,2\} \circ \{0,1\} = \{0,1,2\}
\]
hence there is a $1$-simplex in $Map_{C[\Delta^2]}(0,2)$ connecting $\{0,2\}$ to $\{1,2\} \circ \{0,1\}$. In the 3 dimensional case, $C[\Delta^3]$ has objects $\{0,1,2,3\}$, mapping spaces
\begin{align*}
&Map_{C[\Delta^3]}(0,0) \simeq Map_{C[\Delta^3]}(1,1) \simeq Map_{C[\Delta^3]}(2,2) \simeq Map_{C[\Delta^3]}(3,3) \simeq \ast \\
&Map_{C[\Delta^3]}(0,1) \simeq \{\{0,1\}\} \simeq \ast \\
&Map_{C[\Delta^3]}(1,2) \simeq \{\{1,2\}\} \simeq \ast \\
&Map_{C[\Delta^3]}(2,3) \simeq \{\{2,3\}\} \simeq \ast \,
\end{align*} 
non trivial $1$-dimensional mapping spaces
\begin{align*}
&Map_{C[\Delta^3]}(0,2) \simeq \Delta^1 \\
& Map_{C[\Delta^3]}(1,3) \simeq \Delta^1\\
& Map_{C[\Delta^3]}(0,2) \simeq \Delta^1 \\
&Map_{C[\Delta^3]}(1,3) \simeq \Delta^1 \, 
\end{align*} 
corresponding to the inclusions $\{0,2\} \hookrightarrow \{0,1,2\}$ and $\{1,3\} \hookrightarrow \{0,1,3\}$, and $2$-dimensional mapping spaces
\[
Map_{C[\Delta^3]}(0,3) \simeq \Delta^2 \coprod \Delta^2
\]
where the two $2$-dimensional simplicies can be pictured as
\[
\begin{tikzpicture}
[scale=1, vertices/.style={draw, fill=black, circle, inner sep=0.5pt}]
\node[vertices, label=above:{$\{013\}$}] (a) at (0,4) {};
\node[vertices, label=right:{$\{0123\}$}] (b) at (2,2) {};
\node[vertices, label=left:{$\{03\}$}] (c) at (-2,2) {};
\node[vertices, label=below:{$\{023\}$}] (d) at (0,0) {};
\begin{scope}[decoration={markings,mark=at position 0.5 with {\arrow{>}}}] 
\draw[postaction={decorate}] (c) -- node[auto,swap] {$$} (a);
\draw[postaction={decorate}] (a) -- node[auto] {$$} (b);
\draw[postaction={decorate}] (c) -- node[auto,swap] {$$} (b);
\draw[postaction={decorate}] (c) -- node[auto] {$$} (d);
\draw[postaction={decorate}] (d) -- node[auto,swap] {$$} (b);
\end{scope}
\end{tikzpicture}
\]
those simplicies satisfy the relations given by the composition law on the $0$-simplicies
\[ 
\left\{
  \begin{array}{l l}
   \{0,2\} \hookrightarrow \{0,1,2\} = \{1,2\} \circ \{0,1\} \\
   \{1,3\} \hookrightarrow \{1,2,3\} = \{2,3\} \circ \{1,2\} \\
   \{0,3\} \hookrightarrow \{0,2,3\} = \{2,3\} \circ \{0,2\} \\
   \{0,3\} \hookrightarrow \{0,1,3\} = \{1,3\} \circ \{0,1\} \\
   \{0,2\} \hookrightarrow \{0,1,2\} = \{1,2\} \circ \{0,1\} \\
   \{0,3\} \hookrightarrow \{0,1,3\} \hookrightarrow \{0,1,2,3\} = \{2,3\} \circ \{0,1,2\} \\
   \{0,3\} \hookrightarrow \{0,2,3\} \hookrightarrow \{0,1,2,3\} = \{2,3\} \circ \{0,1,2\} \\
   \{0,3\} \hookrightarrow \{0,1,3\} \hookrightarrow \{0,1,2,3\} = \{1,2,3\} \circ \{0,1\} \\
   \{0,3\} \hookrightarrow \{0,2,3\} \hookrightarrow \{0,1,2,3\} = \{1,2,3\} \circ \{0,1\}
  \end{array} \right.
\]
and on the $1$-simplicies 
\[ 
\left\{
  \begin{array}{l l}
   Id_{\{2,3\}} \circ i_{02}^{012}=i_{023}^{0123} \\
   i_{13}^{123} \circ Id_{\{0,1\}}=i_{013}^{0123}
   \end{array} \right.
\]
\end{ex}
\vspace{7 mm}
Fix $n>0$ and $0\le i\le j\le n$. We give now a geometric description of the mapping space $Map_{\mathcal{C}[\Delta^n]}(i,j)$ in terms of cubes. First notice that
\[
Map_{\mathcal{C}[\Delta^n]}(i,j)\simeq Map_{\mathcal{C}[\Delta^{j-i}]}(0,j-i)
\]
Hence, it is enough to describe $Map_{\mathcal{C}[\Delta^m]}(0,m)$, for every $m>0$. We saw that
\begin{align*}
&Map_{C[\Delta^0]}(0,0) \simeq \ast \\
&Map_{C[\Delta^1]}(0,1) \simeq \ast \, 
\end{align*} 
Let $m>1$ and let 
\[
\Sigma_{m-1}=Aut_{Set}(1,\dots, m-1)
\] 
be the symmetric group on $m-1$ letters. Given an element $\sigma \in \Sigma_{m-1}$, we associate to it an $(m-1)$-simplex in $Map_{C[\Delta^m]}(0,m)$ considering the chain of inclusions
\[
\{0,m\} \hookrightarrow \{0,\sigma(1),m\} \hookrightarrow \{0,\sigma(1), \sigma(2),m\} \hookrightarrow \dots \hookrightarrow \{0,1, \dots,m\}
\]
call this $(m-1)$-simplex $\Delta_{\sigma}^{m-1}$.
\begin{prop}
Let $m>1$, then the simplicial set $Map_{\mathcal{C}[\Delta^m]}(0,m)$ is identified with
\[
\coprod_{\sigma \in \Sigma_{m-1}} \Delta_{\sigma}^{m-1}
\]
which is an $(m-1)$-cube subdivided into $(m-1)!$ simplicies in the standard way:
\[
I^{m-1}= |\underbrace{\Delta^1 \times \dots \times \Delta^1}_{m-1}|
\]
where the vertices of this cube correspond to $2^{m-1}$ monotone edge paths in $\Delta^m$ from $0$ to $m$. 
\end{prop}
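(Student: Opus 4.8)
The plan is to establish the identification $Map_{\mathcal{C}[\Delta^m]}(0,m) \simeq \coprod_{\sigma \in \Sigma_{m-1}} \Delta_\sigma^{m-1}$ by unwinding the definition of the mapping space as the nerve $N_{Cat}(P_{0,m})$ of the poset $P_{0,m}$ of subsets of $\{0,1,\dots,m\}$ containing both $0$ and $m$, and then exhibiting this nerve as a triangulated cube. First I would observe that a subset $S \in P_{0,m}$ is determined exactly by choosing which of the intermediate elements $1,\dots,m-1$ it contains; hence $P_{0,m}$ is isomorphic, as a poset, to the Boolean lattice $2^{\{1,\dots,m-1\}}$ of all subsets of an $(m-1)$-element set, ordered by inclusion. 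This is the conceptual core: the mapping space only ever depended on the $m-1$ ``free'' middle coordinates.

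The second step is the standard fact that the nerve of the Boolean poset $2^{\{1,\dots,m-1\}}$ is the simplicial set underlying the triangulated $(m-1)$-cube. Concretely, the poset $2^{\{1,\dots,m-1\}}$ is the product poset $\underbrace{[1]\times\cdots\times[1]}_{m-1}$ where $[1]=\{0<1\}$, and since $N_{Cat}$ carries products of categories to products of simplicial sets, one gets
\[
N_{Cat}(P_{0,m}) \simeq N_{Cat}([1])^{\times(m-1)} = (\Delta^1)^{\times (m-1)},
\]
which is precisely $|\underbrace{\Delta^1\times\cdots\times\Delta^1}_{m-1}| = I^{m-1}$. The vertices of this cube are the $2^{m-1}$ elements of the poset, i.e.\ the subsets $S$ with $0,m\in S$, and each such $S$ corresponds to the monotone edge path in $\Delta^m$ from $0$ to $m$ that visits exactly the vertices of $S$ in increasing order; this matches the claimed bijection between vertices and monotone paths.

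The third step is to match the simplicial decomposition with the symmetric group indexing. The nondegenerate top-dimensional simplices of the triangulated cube $(\Delta^1)^{\times(m-1)}$ are the maximal chains in the poset $2^{\{1,\dots,m-1\}}$, and a maximal chain is exactly a sequence $\emptyset \subsetneq \{a_1\} \subsetneq \{a_1,a_2\} \subsetneq \dots \subsetneq \{1,\dots,m-1\}$ obtained by adjoining one new element at a time; such chains are in bijection with total orderings $\sigma$ of $\{1,\dots,m-1\}$, of which there are $(m-1)!$. Translating back through the isomorphism $P_{0,m}\cong 2^{\{1,\dots,m-1\}}$, the chain associated to $\sigma$ is precisely the chain of inclusions $\{0,m\}\hookrightarrow\{0,\sigma(1),m\}\hookrightarrow\cdots\hookrightarrow\{0,1,\dots,m\}$ defining $\Delta_\sigma^{m-1}$, so the top simplices are exactly the $\Delta_\sigma^{m-1}$ and they triangulate the cube in the standard (Coxeter) way. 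I would then finish by recording the reduction $Map_{\mathcal{C}[\Delta^n]}(i,j)\simeq Map_{\mathcal{C}[\Delta^{j-i}]}(0,j-i)$ noted just before the statement, which lets the general $(i,j)$ case follow from the $(0,m)$ case with $m=j-i$.

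The step I expect to require the most care is not any single isomorphism but the bookkeeping that these identifications are \emph{compatible}: that the simplicial structure on $N_{Cat}(P_{0,m})$ genuinely agrees with the product cell structure on $(\Delta^1)^{\times(m-1)}$ under the standard triangulation, and that the face relations among the $\Delta_\sigma^{m-1}$ (the shared lower-dimensional faces where two orderings agree on an initial segment) coincide with the facets shared by adjacent simplices in the triangulated cube. For $m$ small this is the content of the worked examples $C[\Delta^2]$ and $C[\Delta^3]$ already displayed, and for general $m$ it amounts to the standard combinatorics of the order complex of a Boolean lattice; the main obstacle is thus purely organizational rather than conceptual, namely verifying that the bijection on top simplices respects all lower-dimensional incidences.
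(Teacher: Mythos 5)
Your proof is correct, but it takes a genuinely different route from the paper's. The paper works directly with the disjoint simplices $\Delta_{\sigma}^{m-1}$: it checks by hand that simplices whose orderings are related by a transposition share an inner face, counts that $(m-2)$ of the $m$ faces of each $\Delta_{\sigma}^{m-1}$ get identified, and then argues that the resulting closed space is an $(m-1)$-cube by analyzing its boundary, reorganizing the $2(m-1)!$ unidentified faces into $2(m-1)$ cubical facets of dimension $m-2$. You instead identify the mapping space at the source: $Map_{\mathcal{C}[\Delta^m]}(0,m)=N_{Cat}(P_{0,m})$, with $P_{0,m}\cong 2^{\{1,\dots,m-1\}}\cong [1]^{\times(m-1)}$ as posets, and since $N_{Cat}$ is a right adjoint it preserves products, so the mapping space is literally $(\Delta^1)^{\times(m-1)}$; the decomposition into the $\Delta_{\sigma}^{m-1}$ then falls out of the standard fact that the nondegenerate top-dimensional simplices of the nerve of a Boolean lattice are its maximal chains, indexed by $\Sigma_{m-1}$. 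Your argument is shorter, less error-prone, and produces the cube structure canonically rather than by inspection, and it makes the bijection between vertices and monotone edge paths immediate. What the paper's hands-on gluing buys is the explicit list of face identifications $d_i^{m-1}(\Delta_{\sigma}^{m-1})\simeq d_i^{m-1}(\Delta_{\sigma'}^{m-1})$ for $1\le i\le m-2$, which is precisely the combinatorial input reused later in the comparison of $N_{dg}^{big}$ and $N_{dg}^{sm}$ (the cancellation of the inner-face terms $\pi_{k-2}(d_j^{k-1}(g_{\sigma}))$ for $0<j<k-1$). If you adopt your route you should therefore record, as you yourself flagged in your last paragraph, that these identifications follow from your description too: two maximal chains share the codimension-one face obtained by deleting the $j$-th subset exactly when the corresponding orderings differ by the transposition of the $j$-th and $(j+1)$-st letters, so the incidence structure of the $\Delta_{\sigma}^{m-1}$ inside $(\Delta^1)^{\times(m-1)}$ is the one the paper exhibits by hand.
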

\begin{proof}
It is a straightforward to check that the $(m-1)$-simplicies $\Delta_{\sigma}^{m-1}$ have $(m-2)$ out of $m$ faces identified within each other, given by
\[
\{d_i^{m-1}(\Delta_{\sigma}^{m-1})\}_{i=1,\dots,m-2}
\] 
Identifying those faces, we get a closed $(m-1)$-dimensional space that is isomorphic to an $(m-1)$-dimensional cube. Its boundary is given by the $m(m-1)!-(m-2)(m-1)!$=$2(m-1)!$ faces of the simplicies $\Delta_{\sigma}^{m-1}$ that were not identified which are of the type $d_0^{m-1}(\Delta_{\sigma}^{m-1})$ or $d_{m-1}^{m-1}(\Delta_{\sigma}^{m-1})$. It can be rearranged in $2(m-1)$ components each of them having the shape of an $(m-2)$-dimensional cube. Namely, the faces that were not identified can be organized in $2(m-1)$ sets, each of them having $(m-2)!$ elements according to the following equivalence relation
\begin{align*}
&d_0^{m-1}(\Delta_{\sigma}^{m-1})\simeq d_0^{m-1}(\Delta_{\sigma'}^{m-1}) \iff \sigma(1)=i=\sigma'(1), & 1\le i \le m \\
&d_{m-1}^{m-1}(\Delta_{\tau}^{m-1})\simeq d_{m-1}^{m-1}(\Delta_{\tau'}^{m-1}) \iff \tau(m-1)=i=\tau'(m-1), & 1\le i \le m \,
\end{align*}
those $(m-2)$-dimensional simplicies can be glued along $(m-3)(m-2)!$ $(m-3)$-simplicies, in order to get an $(m-2)$-dimensional cube, whose external faces are of  type 
\[
\{d_0^{m-2}(d_0^{m-1})(\Delta_{\sigma}^{m-1}), d_{m-1}^{m-2}(d_0^{m-1})(\Delta_{\sigma}^{m-1})\}
\] 
or of type 
\[
\{d_0^{m-2}(d_{m-1}^{m-1})(\Delta_{\tau}^{m-1}), d_{m-1}^{m-2}(d_{m-1}^{m-1})(\Delta_{\tau}^{m-1})\}
\] 
according to the relation above defined.
\end{proof}
\begin{cor}
Let $n>0$ and $0\le i\le j\le n$. Then the simplicial sets $Map_{\mathcal{C}[\Delta^n]}(i,j)$ are given by
\begin{align*}
&Map_{\mathcal{C}[\Delta^n]}(i,i) \simeq \ast \\
&Map_{\mathcal{C}[\Delta^n]}(i,i+1) \simeq \ast \\
&Map_{\mathcal{C}[\Delta^n]}(i,j) \simeq \coprod_{\sigma \in \Sigma_{j-i-1}} \Delta_{\sigma}^{j-i-1}, & i+1<j  \, 
\end{align*} 
\qed
\end{cor}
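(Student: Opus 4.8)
The plan is to deduce all three lines from the single reduction isomorphism
\[
Map_{\mathcal{C}[\Delta^n]}(i,j)\simeq Map_{\mathcal{C}[\Delta^{j-i}]}(0,j-i)
\]
recorded just above the Proposition, together with the Proposition itself and the two base computations $Map_{\mathcal{C}[\Delta^0]}(0,0)\simeq \ast$ and $Map_{\mathcal{C}[\Delta^1]}(0,1)\simeq \ast$. First I would justify the reduction isomorphism. Since $Map_{\mathcal{C}[\Delta^n]}(i,j)=N_{Cat}(P_{i,j})$, where $P_{i,j}$ is the poset of subsets of $\{0,\dots,n\}$ with minimum $i$ and maximum $j$, the order-preserving bijection $k\mapsto k-i$ from $\{i,i+1,\dots,j\}$ onto $\{0,1,\dots,j-i\}$ carries such subsets bijectively and monotonically onto the subsets of $\{0,\dots,j-i\}$ with minimum $0$ and maximum $j-i$. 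This gives an isomorphism of posets $P_{i,j}\cong P_{0,j-i}$, and applying the nerve functor $N_{Cat}$ produces the stated isomorphism of simplicial sets.

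With this in hand I would set $m=j-i$ and split into three cases according to the value of $m$. When $m=0$, i.e.\ $j=i$, the isomorphism together with $Map_{\mathcal{C}[\Delta^0]}(0,0)\simeq \ast$ gives the first line. When $m=1$, i.e.\ $j=i+1$, the isomorphism together with $Map_{\mathcal{C}[\Delta^1]}(0,1)\simeq \ast$ gives the second line. Finally, when $m>1$, i.e.\ $i+1<j$, the Proposition identifies $Map_{\mathcal{C}[\Delta^m]}(0,m)$ with $\coprod_{\sigma\in\Sigma_{m-1}}\Delta_\sigma^{m-1}$; substituting $m=j-i$ and transporting along the reduction isomorphism yields the third line $Map_{\mathcal{C}[\Delta^n]}(i,j)\simeq \coprod_{\sigma\in\Sigma_{j-i-1}}\Delta_\sigma^{j-i-1}$.

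There is essentially no obstacle here: all of the content lies in the preceding Proposition, and the corollary merely organizes its consequence over every pair $(i,j)$ by means of the translation isomorphism. The only point deserving a word of care is that the reduction should be read as an isomorphism of simplicial sets rather than a mere bijection of simplices; this is automatic, since $k\mapsto k-i$ is an isomorphism of the underlying posets and hence induces a simplicial isomorphism on nerves. Because the statement concerns only the mapping spaces and not the composition law of $\mathcal{C}[\Delta^n]$, no compatibility with composition needs to be checked.
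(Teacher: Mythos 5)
Your proposal is correct and matches the paper's intended argument: the corollary is stated with no separate proof precisely because it follows immediately from the translation isomorphism $Map_{\mathcal{C}[\Delta^n]}(i,j)\simeq Map_{\mathcal{C}[\Delta^{j-i}]}(0,j-i)$, the two base cases $m=0,1$, and the preceding Proposition, exactly as you organize it. Your added justification of the translation isomorphism via the poset isomorphism $P_{i,j}\cong P_{0,j-i}$ is a worthwhile detail the paper leaves implicit, but it does not change the route.
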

\begin{ex}\normalfont
Let us describe $Map_{\mathcal{C}[\Delta^m]}(0,m)$ in the case $m=4$. We have $3!=6$  $3$-simplicies corresponding to the elements of $\Sigma_{3}$
\[
\begin{tikzpicture}
[scale=2, vertices/.style={draw, fill=black, circle, inner sep=0.5pt}]
\node[vertices, label=below:{$\{04\}$}] (a) at (0,0) {};
\node[vertices, label=right:{$\{0\sigma(1)4\}$}] (b) at (1.2,1.2) {};
\node[vertices, label=left:{$\{0\sigma(1)\sigma(2)4\}$}] (c) at (-1.5,0.7) {};
\node[vertices, label=above:{$\{0\sigma(1)\sigma(2)\sigma(3)4\}$}] (d) at (-0.1,2.2) {};
\foreach \to/\from in {a/b,a/c,a/d,b/d,c/d}
\draw [-] (\to)--(\from);
\draw [dashed] (b)--(c);
\end{tikzpicture}
\]
and we have the identifications of the $2$-faces of those $3$-simplicies 
\begin{align*}
&d_1^3(\Delta_{Id}^3) \simeq d_1^3(\Delta_{(12)}^3) \\
&d_2^3(\Delta_{Id}^3) \simeq d_2^3(\Delta_{(23)}^3) \\
&d_2^3(\Delta_{(12)}^3) \simeq d_2^3(\Delta_{(123)}^3) \\
&d_1^3(\Delta_{(23)}^3) \simeq d_1^3(\Delta_{(132)}^3) \\
&d_1^3(\Delta_{(13)}^3) \simeq d_1^3(\Delta_{(123)}^3) \\
&d_2^3(\Delta_{(13)}^3) \simeq d_2^3(\Delta_{(132)}^3) \, 
\end{align*} 
\newpage
after this identification, we obtain a $3$-dimensional cube 
\[
\begin{tikzpicture}
[scale=1, vertices/.style={draw, fill=black, circle, inner sep=0.5pt}]
\node[vertices, label=below:{$\{014\}$}] (a) at (0,2.7) {};
\node[vertices, label=right:{$\{024\}$}] (b) at (2,2) {};
\node[vertices, label=left:{$\{034\}$}] (c) at (-2,2) {};
\node[vertices, label=below:{$\{04\}$}] (d) at (0,0) {};
\node[vertices, label=left:{$\{0134\}$}] (e) at (-2,4.7) {};
\node[vertices, label=right:{$\{0124\}$}] (f) at (2,4.7) {};
\node[vertices, label=above:{$\{0234\}$}] (g) at (0,4) {};
\node[vertices, label=above:{$\{01234\}$}] (h) at (0,6.7) {};
\begin{scope}[decoration={markings,mark=at position 0.5 with {\arrow{>}}}] 
\draw[postaction={decorate}] (d) -- node[auto,swap] {$$} (a);
\draw[postaction={decorate}] (d) -- node[auto] {$$} (b);
\draw[postaction={decorate}] (d) -- node[auto,swap] {$$} (c);
\draw[postaction={decorate}] (c) -- node[auto] {$$} (e);
\draw[postaction={decorate},dashed] (c) -- node[auto] {$$} (g);
\draw[postaction={decorate}] (b) -- node[auto] {$$} (f);
\draw[postaction={decorate},dashed] (b) -- node[auto] {$$} (g);
\draw[postaction={decorate}] (a) -- node[auto] {$$} (e);
\draw[postaction={decorate}] (a) -- node[auto] {$$} (f);
\draw[postaction={decorate}] (e) -- node[auto] {$$} (h);
\draw[postaction={decorate},dashed] (g) -- node[auto] {$$} (h);
\draw[postaction={decorate}] (f) -- node[auto] {$$} (h);
\draw[postaction={decorate}] (d) -- node[auto,swap] {$$} (b);
\end{scope}
\end{tikzpicture}
\]
whose $2$-dimensional cubical faces are
\[
\begin{tikzpicture}
[scale=1, vertices/.style={draw, fill=black, circle, inner sep=0.5pt}]
\node[vertices, label=above:{$\{0124\}$}] (a) at (0,4) {};
\node[vertices, label=right:{$\{01234\}$}] (b) at (2,2) {};
\node[vertices, label=left:{$\{014\}$}] (c) at (-2,2) {};
\node[vertices, label=below:{$\{0134\}$}] (d) at (0,0) {};
\node[vertices, label=above:{$d_0^3(\Delta_{Id}^3)$}] (p) at (0,1) {};
\node[vertices, label=below:{$d_0^3(\Delta_{(23)}^3)$}] (q) at (0,3) {};
\node[vertices, label=above:{$\{014\}$}] (e) at (6.5,4) {};
\node[vertices, label=right:{$\{0124\}$}] (f) at (8.5,2) {};
\node[vertices, label=left:{$\{04\}$}] (g) at (4.5,2) {};
\node[vertices, label=below:{$\{024\}$}] (h) at (6.5,0) {};
\node[vertices, label=above:{$d_3^3(\Delta_{Id}^3)$}] (s) at (6.5,1) {};
\node[vertices, label=below:{$d_3^3(\Delta_{(12)}^3)$}] (t) at (6.5,3) {};
\begin{scope}[decoration={markings,mark=at position 0.5 with {\arrow{>}}}] 
\draw[postaction={decorate}] (c) -- node[auto,swap] {$$} (a);
\draw[postaction={decorate}] (a) -- node[auto] {$$} (b);
\draw[postaction={decorate}] (c) -- node[auto,swap] {$$} (b);
\draw[postaction={decorate}] (c) -- node[auto] {$$} (d);
\draw[postaction={decorate}] (d) -- node[auto,swap] {$$} (b);
\draw[postaction={decorate}] (g) -- node[auto,swap] {$$} (e);
\draw[postaction={decorate}] (e) -- node[auto] {$$} (f);
\draw[postaction={decorate}] (g) -- node[auto,swap] {$$} (f);
\draw[postaction={decorate}] (g) -- node[auto] {$$} (h);
\draw[postaction={decorate}] (h) -- node[auto,swap] {$$} (f);
\end{scope}
\end{tikzpicture}
\]
\[
\begin{tikzpicture}
[scale=1, vertices/.style={draw, fill=black, circle, inner sep=0.5pt}]
\node[vertices, label=above:{$\{0124\}$}] (a) at (0,4) {};
\node[vertices, label=right:{$\{01234\}$}] (b) at (2,2) {};
\node[vertices, label=left:{$\{024\}$}] (c) at (-2,2) {};
\node[vertices, label=below:{$\{0234\}$}] (d) at (0,0) {};
\node[vertices, label=above:{$d_0^3(\Delta_{(12)}^3)$}] (p) at (0,1) {};
\node[vertices, label=below:{$d_0^3(\Delta_{(123)}^3)$}] (q) at (0,3) {};
\node[vertices, label=above:{$\{0234\}$}] (e) at (6.5,4) {};
\node[vertices, label=right:{$\{01234\}$}] (f) at (8.5,2) {};
\node[vertices, label=left:{$\{034\}$}] (g) at (4.5,2) {};
\node[vertices, label=below:{$\{0134\}$}] (h) at (6.5,0) {};
\node[vertices, label=above:{$d_0^3(\Delta_{(13)}^3)$}] (s) at (6.5,1) {};
\node[vertices, label=below:{$d_0^3(\Delta_{(132)}^3)$}] (t) at (6.5,3) {};
\begin{scope}[decoration={markings,mark=at position 0.5 with {\arrow{>}}}] 
\draw[postaction={decorate}] (c) -- node[auto,swap] {$$} (a);
\draw[postaction={decorate}] (a) -- node[auto] {$$} (b);
\draw[postaction={decorate}] (c) -- node[auto,swap] {$$} (b);
\draw[postaction={decorate}] (c) -- node[auto] {$$} (d);
\draw[postaction={decorate}] (d) -- node[auto,swap] {$$} (b);
\draw[postaction={decorate}] (g) -- node[auto,swap] {$$} (e);
\draw[postaction={decorate}] (e) -- node[auto] {$$} (f);
\draw[postaction={decorate}] (g) -- node[auto,swap] {$$} (f);
\draw[postaction={decorate}] (g) -- node[auto] {$$} (h);
\draw[postaction={decorate}] (h) -- node[auto,swap] {$$} (f);
\end{scope}
\end{tikzpicture}
\]
\[
\begin{tikzpicture}
[scale=1, vertices/.style={draw, fill=black, circle, inner sep=0.5pt}]
\node[vertices, label=above:{$\{034\}$}] (a) at (0,4) {};
\node[vertices, label=right:{$\{0234\}$}] (b) at (2,2) {};
\node[vertices, label=left:{$\{04\}$}] (c) at (-2,2) {};
\node[vertices, label=below:{$\{024\}$}] (d) at (0,0) {};
\node[vertices, label=above:{$d_3^3(\Delta_{(13)}^3)$}] (p) at (0,1) {};
\node[vertices, label=below:{$d_3^3(\Delta_{(123)}^3)$}] (q) at (0,3) {};
\node[vertices, label=above:{$\{034\}$}] (e) at (6.5,4) {};
\node[vertices, label=right:{$\{0134\}$}] (f) at (8.5,2) {};
\node[vertices, label=left:{$\{04\}$}] (g) at (4.5,2) {};
\node[vertices, label=below:{$\{014\}$}] (h) at (6.5,0) {};
\node[vertices, label=above:{$d_3^3(\Delta_{(132)}^3)$}] (s) at (6.5,1) {};
\node[vertices, label=below:{$d_3^3(\Delta_{(23)}^3)$}] (t) at (6.5,3) {};
\begin{scope}[decoration={markings,mark=at position 0.5 with {\arrow{>}}}] 
\draw[postaction={decorate}] (c) -- node[auto,swap] {$$} (a);
\draw[postaction={decorate}] (a) -- node[auto] {$$} (b);
\draw[postaction={decorate}] (c) -- node[auto,swap] {$$} (b);
\draw[postaction={decorate}] (c) -- node[auto] {$$} (d);
\draw[postaction={decorate}] (d) -- node[auto,swap] {$$} (b);
\draw[postaction={decorate}] (g) -- node[auto,swap] {$$} (e);
\draw[postaction={decorate}] (e) -- node[auto] {$$} (f);
\draw[postaction={decorate}] (g) -- node[auto,swap] {$$} (f);
\draw[postaction={decorate}] (g) -- node[auto] {$$} (h);
\draw[postaction={decorate}] (h) -- node[auto,swap] {$$} (f);
\end{scope}
\end{tikzpicture}
\]
\end{ex}
\newpage
\subsection{The big dg-nerve and comparison with the small dg-nerve} 
Given a simplicial category $\mathcal{C}$, its simplicial nerve is the simplicial set $N_{SCat}(\mathcal{C})$ whose $n$-simplicies are determined by the adjunction (\ref{adj})
\[
N_{SCat}(\mathcal{C})_{n}=Hom_{SCat}(\mathcal{C}[\Delta^n], \mathcal{C})
\]
Hence, an $n$-simplex of $N_{SCat}(\mathcal{C})$ is a $\mathcal{C}[\Delta^n]$ shaped simplicial diagram in $\mathcal{C}$. In particular
\[
N_{SCat}(\mathcal{C})_{0}=Ob(\mathcal{C})
\]
and for $n>0$ an element of $N_{SCat}(\mathcal{C})_{n}$ is given by the list of data 
\[
 \begin{cases} x_0, x_1, \dots , x_n \in Ob(\mathcal{C}) \\ 
 g_{I_0}  \in Map_{\mathcal{C}}(x_i, x_j)_0, I_0 \subseteq \{i,\dots , j\}, max(I_0)=i, min(I_0)=j\\ 
 g_{I_0,I_1}  \in Map_{\mathcal{C}}(x_i, x_j)_1, I_0 \hookrightarrow I_1 \subseteq \{i,\dots , j\}\\ 
 \dots \\
 g_{I_0,I_1,\dots,I_{n-1}}  \in Map_{\mathcal{C}}(x_i, x_j)_1, I_0 \hookrightarrow \dots \hookrightarrow I_{n-1} \subseteq \{i,\dots , j\}\\ 
\end{cases}
\]  
where, fixed $0\le i<j\le n$, $I_p$ is an ordered subset of $\{i,\dots , j\}$ with minimum $i$ and maximum $j$. Those data have to satisfy to the relations
\begin{align*}
& d^{k}_j(g_{I_0, \dots, I_k})=g_{I_0, \dots, \hat{I_j}, \dots, I_{k}}, & 0\le k\le n-1, 0\le j\le k \\
&s^{k}_j(g_{I_0, \dots, I_k})=g_{I_0, \dots, I_j, I_j, \dots, I_{k}}, & 0\le k\le n-1, 0\le j< k \,
\end{align*}
where $d^{k}_j$ and $s^{k}_j$ are the simplicial maps of the simplicial category $\mathcal{C}$, and the relations given by the composition
\[
g_{I_0 \cup J_0, \dots, I_k \cup J_k} = g_{I_0, \dots, I_k} \circ g_{J_0, \dots, J_k} 
\]
\vspace{5 mm}
\begin{defi}[Big dg-nerve, \cite{Lurie 2}]
Let $\mathcal{D}$ be a dg-category, the big dg-nerve is the simplicial nerve of the simplicial category $\mathcal{D}_{\Delta}$
\[
N_{dg}^{big}(\mathcal{D}):=N_{SCat}(\mathcal{D}_{\Delta})
\]
\end{defi}
The simplicial category $\mathcal{D}_{\Delta}$ is a fibrant simplicial category because the mapping spaces are the underlying simplicial sets of a simplicial object in the abelian category $Mod_{\mathbb{K}}$, and it is a general fact that those are always Kan complexes \cite{Wei}. In particular simplicial nerves of fibrant simplicial categories are always $\infty$-category \cite{Lurie 1} and so it is the big dg-nerve. We describe in the following proposition how to construct an equivalence of $\infty$-categories between $N_{dg}^{big}(\mathcal{D})$ and $N_{dg}^{sm}(\mathcal{D})$ using the cubical interpretation of the simplicial nerve of a simplicial category given in the previous section.
\newpage
\begin{prop}
The big and the small nerve of a dg-category are equivalent $\infty$-categories.
\begin{proof}
Let $\mathcal{D}$ be a dg-category and $n\ge0$. Define the map of sets
\[
N_{dg}^{big}(\mathcal{D})_n \to N_{dg}^{sm}(\mathcal{D})_n
\] 
in the following way: let $l\ge 0$, consider 
\[
\pi_l : Map_{\mathcal{D}_{\Delta}}(x_{i_0},x_{i_k})_l \to Hom^{-l}_{\mathcal{D}}(x_{i_0},x_{i_k})
\] 
the projection onto the highest degree component given by the construction of the mapping space of $\mathcal{D}_{\Delta}$ via Dold-Kan. Given an $n$-simplex of $N_{dg}^{big}(\mathcal{D})=N_{SCat}(\mathcal{D}_{\Delta})$
\[
(\{x_i\}^n_{i=0},\{g_{I_0}\}, \dots, \{g_{I_0,I_1,\dots,I_{n-1}} \}_{I_0 \hookrightarrow \dots \hookrightarrow I_{n-1}} )
\] 
we produce an $n$-simplex of $N_{dg}^{sm}(\mathcal{D})$ 
\[
(\{x_{i_0}\}, \{f_{i_0i_1}\}, \dots, \{f_{i_0\dots i_n}\}))
\] 
using the following inductive definition: objects $\{x_{i_0}\}$ are the same. For $0\le i_0<i_1<i_2\le n$
\[
f_{i_0i_1}=g_{\{i_0,i_1\}}
\]
\[
f_{i_0i_1i_2}=-\pi_1(g_{\{i_0,i_2\} \hookrightarrow \{i_0,i_1,i_2\}}) 
\]
Now, assume we defined $f_{i_0i_1\dots i_{k-1}}$ for any $0\le i_0<i_1<\dots < i_{k-1}\le n $ then we define $f_{i_0i_1\dots i_k}$ for a fixed $0\le i_0<i_1<\dots < i_k\le n$. Let $\sigma \in \Sigma_{k-1}$, consider the chain of inclusions 
\[
\{i_0,i_k\} \hookrightarrow \{i_0,i_{\sigma(1)},i_k\} \hookrightarrow \{i_0,i_{\sigma(1)}, i_{\sigma(2)},i_k\} \hookrightarrow \dots \hookrightarrow \{i_0,i_{\sigma(1)}, \dots,i_k\}
\] 
and let $g_{\sigma}=g_{\{i_0,i_k\},\{i_0,i_{\sigma(1)},i_k\},\dots,\{i_0,i_{\sigma(1)}, \dots,i_k\}}\in Map_{\mathcal{D}_{\Delta}}(x_{i_0},x_{i_k})_{k-1}$. Define
\[
f_{i_0i_1\dots i_k}=(-1)^{k-1}\sum_{\sigma \in \Sigma_{k-1}} sgn(\sigma) \pi_{k-1}(g_{\sigma})
\]
this is a well defined element of $Hom^{1-k}_{\mathcal{D}}(x_{i_0},x_{i_k})$. We want to show that the its boundary is given by the formula (\ref{dgeq}). According to (\ref{DK}), we have that
\[
d(\pi_{k-1}(g_{\sigma}))=\sum_{j=0}^{k-1}(-1)^j\pi_{k-2}(d_j^{k-1}(g_{\sigma}))
\]
The description via cubes induces a simplification of the terms of the form $\pi_{k-2}(d_j^{k-1}(g_{\sigma}))$ for $0< j <k-1$. We find hence that 
\[
d(f_{i_0i_1\dots i_k})=\sum_{1\le j\le k-1} \sum_{\sigma(1)=j} (-1)^{k-1}sgn(\sigma) \pi_{k-2}(d_0^{k-1}(g_{\sigma})) + 
\]
\[
+\sum_{1\le j\le k-1} \sum_{\tau(k-1)=j} sgn(\tau) \pi_{k-2}(d_{k-1}^{k-1}(g_{\tau}))
\]
Consider the equivalence class of permutations $\tau$, with $\tau(k-1)=k-1$. There are exactly $(k-2)!$ permutation of this type and this is a subgroup of $\Sigma_{k-1}$ isomorphic to $\Sigma_{k-2}$. If $\tau'$ is the permutation in $\Sigma_{k-2}$ associated to $\tau \in \Sigma_{k-1}$ under this isomorphism, we have that
\[
d_{k-1}^{k-1}(g_{\tau})= g_{\{i_0i_k\},\dots,\{i_0,i_{\tau(1)}, \dots,i_{\tau(k-2)},i_k\}}=g_{\{i_0i_k\},\dots,\{i_0,i_{\tau'(1)}, \dots,i_{\tau'(k-1)},i_k\}}
\]
and hence
\[ 
\sum_{\tau(k-1)=k-1} sgn(\tau) \pi_{k-2}(d_{k-1}^{k-1}(g_{\tau}))=\sum_{\tau' \in \Sigma_{k-2}} sgn(\tau') \pi_{k-2}(g_{\{i_0i_k\},\dots,\{i_0,i_{\tau'(1)}, \dots,i_{\tau'(k-1)},i_k\}})=
\]
\[
=(-1)^{k-2} f_{i_0i_1\dots \hat{i}_{k-1} i_k}
\]
and for a general equivalence $\tau(k-1)=j$, multiplication on the right by the permutation $(k-1\dots j)$, induces a bijection with the equivalence class $\tau'(k-1)=k-1$, hence we find
\[
\sum_{\tau(k-1)=j} sgn(\tau) \pi_{k-2}(d_{k-1}^{k-1}(g_{\tau}))=\sum_{\tau'(k-1)=1} (-1)^{k-1-j} sgn(\tau') \pi_{k-2}(d_{k-1}^{k-1}(g_{\tau' \circ (j\dots k-1)}))=
\]
\[
=(-1)^{j-1} f_{i_0\dots \hat{i}_{j} \dots i_k}
\]
Consider now the class of permutations $\sigma \in \Sigma_{k-1}$ with $\sigma(1)=j$. Given such $\sigma$, define
\[
\alpha_{\sigma}=\{p\mid \sigma(p)>j, 2\le p\le k-1\}=\{p_1<\dots < p_n\}
\]
\[
\beta_{\sigma}=\{q\mid \sigma(q)<j, 2\le q\le k-1\}=\{q_1<\dots < q_m\}
\]
to those, we can associate the $n$-simplex
\[
g_{\{i_0i_j\}, \{i_0i_{\sigma(p_1)}i_j\},\dots,\{i_0,i_1, \dots ,i_j\}}
\]
and the $m$-simplex
\[
g_{\{i_ji_k\}, \{i_0i_{\sigma(q_1)}i_j\},\dots,\{i_j,i_{j+1}, \dots ,i_k\}}
\]
It is easy to see that
\[
d_0^{k-1}(g_{\sigma})=s_{\bar{p}}(g_{\{i_ji_k\}, \{i_ji_{\sigma(q_1)}i_k\},\dots,\{i_j,i_{j+1}, \dots ,i_k\}})\circ s_{\bar{q}}(g_{\{i_0i_j\}, \{i_0i_{\sigma(p_1)}i_j\},\dots,\{i_0,i_1, \dots ,i_j\}}) 
\]
where
\[
s_{\bar{q}}=s_{q_m-2}\circ \dots \circ s_{q_1-2} 
\]
\[
s_{\bar{p}}=s_{p_n-2}\circ \dots \circ s_{p_1-2}
\]
By the definition of the Alexander-Whitney map (\ref{AW}), we have that $\pi_{k-2}(d_0^{k-1}(g_{\sigma}))$ equals to
\[
\sum_{p+q=k-2}(-1)^{(k-2)p+1}\pi_q(\tilde{d}^p(s_{\bar{p}}(g_{\{i_ji_k\}, \dots,\{i_j,\dots ,i_k\}})))\circ \pi_p(d_0^q(s_{\bar{q}}(g_{\{i_0i_j\},\dots,\{i_0, \dots ,i_j\}})))
\] 
but 
\[
\pi_q(\tilde{d}^p(s_{\bar{p}}(g_{\{i_ji_k\}, \dots,\{i_j,\dots ,i_k\}})))
\]
is non zero if and only if $p\ge (k-2)-m$ and similarly
\[
\pi_p(d_0^q(s_{\bar{q}}(g_{\{i_0i_j\},\dots,\{i_0, \dots ,i_j\}})))
\]
is non zero if and only if $q\ge m$ and because $p+q=k-2$, both are non zero only when $p=(k-2)-m$ and $q=m$. Hence we have that  $\pi_{k-2}(d_0^{k-1}(g_{\sigma}))$ equals to
\[
(-1)^{(k-2)n+1}\pi_m(\tilde{d}^n(s_{\bar{p}}(g_{\{i_ji_k\}, \dots,\{i_j,\dots ,i_k\}})))\circ \pi_n(d_0^m(s_{\bar{q}}(g_{\{i_0i_j\},\dots,\{i_0, \dots ,i_j\}})))
\]
Now, those components may still be zero, depending on $\sigma$. In particular the left hand side is non zero if and only if
\[
p_n=k-1, p_{n-1}=k-2, \dots, p_1=k-n
\]
and the right hand side is non zero if and only if
\[
q_m=2, q_{m-1}=3, \dots, q_1=m+1
\]
Those conditions impose that $\sigma$ has to satisfy
\begin{align*}
&\sigma(1)=j \\
&\sigma(\{2,\dots,m+1\})=\{j+1,\dots,k-1\} \\
&\sigma(\{m+2,\dots,k-1\})=\{1,\dots,j-1\} \, 
\end{align*} 
In particular $m=k-j-1$ and $n=j-1$. Now, taking restrictions of $\sigma$ to those subsets, we can associate to it two permutations, $\tau \in \Sigma_{j-1}$ and $\rho \in \Sigma_{k-j-1}$. This association is bijective and
\[
sgn(\sigma)=-sgn(\tau)sgn(\rho)
\]
Hence we get the final equation given by
\[
\sum_{\sigma(1)=j} (-1)^{k-1}sgn(\sigma) \pi_{k-2}(d_0^{k-1}(g_{\sigma}))= 
\]
\[
=(-1)^{k+k(j-1)+1}(\sum_{\rho \in \Sigma_{k-j-1}} sgn(\rho) \pi_{k-j-1}(g_{\rho}))\circ (\sum_{\tau \in \Sigma_{j-1}} sgn(\tau) \pi_{k-j-1}(g_{\tau}))=
\]
\[
=(-1)^{k(j-1)+1} f_{i_j\dots i_k}\circ f_{i_0\dots i_j}
\]
Hence we found that
\[
d(f_{i_0\dots i_k})=\sum_{0<j<n}(-1)^{j-1} f_{i_0\dots \hat{i_j}\dots i_k} + \sum_{0<j<n}(-1)^{1+k(j-1)}f_{i_j\dots i_k}\circ f_{i_0\dots i_j}
\]
This construction clearly respects the simplicial structure and hence defines a morphism of simplicial sets $N_{dg}^{big}(\mathcal{D}) \to N_{dg}^{sm}(\mathcal{D})$. According to J.Lurie \cite{Lurie 2}, this morphism is an equivalence of $\infty$-categories.
\end{proof}
\end{prop}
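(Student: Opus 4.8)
The plan is to build an explicit morphism of simplicial sets
\[
\Phi: N_{dg}^{big}(\mathcal{D}) \longrightarrow N_{dg}^{sm}(\mathcal{D})
\]
degreewise, and then argue that it is an equivalence. An $n$-simplex of $N_{dg}^{big}(\mathcal{D})=N_{SCat}(\mathcal{D}_\Delta)$ is a $\mathcal{C}[\Delta^n]$-shaped diagram $(\{x_i\},\{g_{I_0,\dots,I_k}\})$ in $\mathcal{D}_\Delta$, whereas an $n$-simplex of $N_{dg}^{sm}(\mathcal{D})$ is a collection $(\{x_{i_0}\},\{f_{i_0\dots i_k}\})$ with $f_{i_0\dots i_k}\in Hom^{1-k}_{\mathcal{D}}(x_{i_0},x_{i_k})$ subject to the differential identity (\ref{dgeq}). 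The cubical description of $Map_{\mathcal{C}[\Delta^n]}(i,j)$ from the previous section is the crucial bridge: it exhibits the top cells of this mapping space as indexed by permutations $\sigma\in\Sigma_{k-1}$, each carrying a $(k-1)$-simplex $g_\sigma$. I would therefore set
\[
f_{i_0\dots i_k}=(-1)^{k-1}\sum_{\sigma\in\Sigma_{k-1}} sgn(\sigma)\,\pi_{k-1}(g_\sigma),
\]
where $\pi_{k-1}$ is the projection onto the top Dold-Kan component; this lands in $Hom^{1-k}_{\mathcal{D}}$ by the very construction of $Map_{\mathcal{D}_\Delta}$.

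The main task is to verify that these $f_{i_0\dots i_k}$ satisfy (\ref{dgeq}). Applying the Dold-Kan face formula (\ref{DK}) to each $\pi_{k-1}(g_\sigma)$ rewrites $d(f_{i_0\dots i_k})$ as an alternating sum over the faces $d_j^{k-1}(g_\sigma)$. Here the geometry of the cube does the decisive work: the inner faces with $0<j<k-1$ are identified in pairs between adjacent permutations, so after the signs are taken into account they cancel, leaving only the two boundary faces $d_0^{k-1}$ and $d_{k-1}^{k-1}$. Grouping the $d_{k-1}^{k-1}$-terms by the value $\sigma(k-1)=j$ and reindexing through the cyclic permutation $(j\,\dots\,k-1)$ reassembles them into the simplicial part $\sum_{0<j<k}(-1)^{j-1}f_{i_0\dots\hat{i_j}\dots i_k}$. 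For the $d_0^{k-1}$-terms I would use that the composition in $\mathcal{D}_\Delta$ is governed by the Alexander-Whitney map (\ref{AW}): each such face factors as a product, and (\ref{AW}) forces all shuffle components except one to vanish, leaving precisely the compositions $f_{i_j\dots i_k}\circ f_{i_0\dots i_j}$ with sign $(-1)^{1+k(j-1)}$.

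I expect the delicate point to be exactly this $d_0$-analysis. One must pin down the single nonvanishing Alexander-Whitney term (the one with $p=(k-2)-m$ and $q=m$), characterize the permutations $\sigma$ that actually contribute --- those that send an increasing block of $\{2,\dots,k-1\}$ onto $\{j+1,\dots,k-1\}$ and the complementary increasing block onto $\{1,\dots,j-1\}$ --- and verify that the resulting bijection with $\Sigma_{j-1}\times\Sigma_{k-j-1}$ obeys the sign law $sgn(\sigma)=-sgn(\tau)sgn(\rho)$. Carrying the signs correctly through the suspension conventions is where the argument is most error-prone. Once (\ref{dgeq}) is established, compatibility of $\Phi$ with faces and degeneracies is a direct check, so $\Phi$ is a genuine map of simplicial sets; it is then an equivalence of $\infty$-categories by Lurie's comparison result \cite{Lurie 2}.
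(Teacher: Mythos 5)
Your proposal follows essentially the same route as the paper: the same formula $f_{i_0\dots i_k}=(-1)^{k-1}\sum_{\sigma\in\Sigma_{k-1}}sgn(\sigma)\,\pi_{k-1}(g_\sigma)$, the same verification of (\ref{dgeq}) via the Dold--Kan face identity (\ref{DK}), cancellation of inner cube faces, cyclic reindexing of the $d_{k-1}$-faces, and the Alexander--Whitney analysis of the $d_0$-faces (including the single surviving shuffle term and the sign law $sgn(\sigma)=-sgn(\tau)sgn(\rho)$), concluding with Lurie's comparison result. The plan is correct and matches the paper's proof in both structure and all key details.
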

\newpage
\section{Dg-nerve of pretriangulated dg-categories}
We recall in this section the definition of a pretriangulated dg-category and the one of stable $\infty$-category, the notion of homotopy limits and colimits in model categories and how to compute them when the indexing category is Reedy. We prove that $N^{big}_{dg}(\mathcal{D})$, and hence $N^{sm}_{dg}(\mathcal{D})$,  of a pretriangulated dg-category $\mathcal{D}$ is a stable $\infty$-category.
\subsection{Pretriangulated dg-categories and stable $\infty$-categories}
\begin{defi}[dg-category of twisted complexes, \cite{MK}]
Let $\mathcal{D}$ be a dg-category. A twisted complex in $\mathcal{D}$ is the data consisting of a pair $K=(K_i, q_{ij})_{i,j\in \mathbb{Z}}$, where:
\vspace{3 mm}
\begin{itemize}
\item $K_i\in Ob(\mathcal{D})$ are $0$ for almost all $i\in \mathbb{Z}$
\item $q_{ij}\in Hom^{i-j+1}_{\mathcal{D}}(K_i,K_j)$ such that 
\[
d(q_{ij})+\sum_{k\in \mathbb{Z}} q_{kj}q_{ik}=0
\]
\end{itemize}
The dg-category $PreTr(\mathcal{D})$ of twisted complexes over $\mathcal{D}$ is the differential category whose objects are twisted complexes over $\mathcal{D}$ and cochain complex of morphisms
\[
Hom^{k}_{PreTr(\mathcal{D})}(K,K')=\underset{l+j-i=k}{\bigoplus}Hom^{l}_{\mathcal{D}}(K_i,K'_j)
\]
with differential $d$ given for $f\in Hom^{l}_{\mathcal{D}}(K_i,K'_j)$ by
\[
d(f)=d(f)+\sum_m (q'_{jm}f+(-1)^{l(i-m+1)}fq_{mi})
\]
\end{defi}
\begin{rem}\normalfont
The dg-category $PreTr(\mathcal{D})$ is the "smallest" dg category in which $\mathcal{D}$ embeds and for which it is possible to define the shift functor and functorial cones. 

To make this precise, consider the category of dg-functors $dgFun(\mathcal{D},Ch^{\bullet}(Vect_{\mathbb{K}}))$. This category as a dg-enrichment, as described in [Ex.\ref{ex1}], with respect to which the diagram of dg-functors
 \[
  \begin{tikzpicture}
    \def\x{1.5}
    \def\y{-1.2}
    \node (A1_1) at (0*\x, 2*\y) {$\mathcal{D}$};
    \node (A2_2) at (3*\x, 2*\y) {$dgFun(\mathcal{D},Ch^{\bullet}(Vect_k))$};
    \node (A1_2) at (1.5*\x, 3.5*\y) {$PreTr(\mathcal{D})$};
   \path (A1_1) edge [right hook->] node [auto] {$\scriptstyle{\epsilon}$} (A1_2);
    \path (A1_2) edge [right hook->] node [auto] {$\scriptstyle{\alpha}$} (A2_2);
    \path (A1_1) edge [right hook->] node [auto] {$\scriptstyle{h}$} (A2_2);
     \end{tikzpicture}
  \]
is commutative, where $h$ is the dg-Yoneda functor
\[
h(X)(Y)=Hom^{\bullet}_{C}(X,Y)
\]
$\epsilon$ is the dg-functor sending an object $X\in Ob(C)$ to the twisted complex concetrated in degree $0$ and $\alpha$ is the dg-functor that to a twisted complex $K=(K_i, q_{ij})_{i,j\in \mathbb{Z}}$ associates the dg-functor
\[
\alpha(K)(Y)=\bigoplus_{i\in \mathbb{Z}} Hom^{\bullet}_{C}(Y,K_i)[-i]
\]
with twisted differential $d + q$. Moreover, both dg-categories $PreTr(\mathcal{D})$ and $dgFun(\mathcal{D},Ch^{\bullet}(Vect_{\mathbb{K}}))$ have shift functors and functorial cones which are preserved by the dg-functor $\alpha$. Namely, on the dg-category $dgFun(\mathcal{D},Ch^{\bullet}(Vect_{\mathbb{K}}))$ we define the shift according to the formula (\ref{sh}). If $\eta: F\to G$ is a dg-natural transformation of dg-functors, then its cone $Cone(\eta)$ is defined objectwise by 
\[
Cone(\eta)(X)=Cone(\eta(X))
\] 
Similarly, if $K$ is a twisted complex, its shift by $1$ is given by
\[
K[1]_i=K_{i+1}
\] 
\[
q[1]_{ij}=q_{i+1,j+1}
\]
If $f:K\to K'$ is a morphism of twisted complexes, where $K=(K_i, q_{ij})$ and $K'=(K'_i, q'_{ij})$, its cone is given by 
\[
Cone(f)=(K_{i+1}\bigoplus K'_i, q''_{ij})
\]
where $q''_{ij}$ is given by the matrix
\[
q''_{ij}=\begin{vmatrix} q_{i+1,j+1} & f_{i+1,j} \\ 0 & q'_{ij} \end{vmatrix}
\]
Those constructions on the dg-category $PreTr(\mathcal{D})$ are compatible with the dg-functor $\alpha$. The remarkable fact is that shift and cones so defined on $PreTr(\mathcal{D})$ induce a triangulated structure on the category $H^0(PreTr(\mathcal{D}))$ \cite{MK}, where exact triangles are given by sequences in $H^0(PreTr(\mathcal{D}))$
\[
\begin{tikzpicture}
    \def\x{1.5}
    \def\y{-1.2}
    \node (A1_1) at (0*\x, 2*\y) {$K$};
    \node (A2_2) at (1*\x, 2*\y) {$K'$};
    \node (A3_3) at (2.2*\x, 2*\y) {$Cone(f)$};
   \node (A4_4) at (3.7*\x, 2*\y) {$K[1]$};
   \path (A1_1) edge [->] node [auto] {$\scriptstyle{f}$} (A2_2);
    \path (A2_2) edge [->] node [auto] {$\scriptstyle{}$} (A3_3);
    \path (A3_3) edge [->] node [auto] {$\scriptstyle{}$} (A4_4);
     \end{tikzpicture}
\]
\end{rem}
We recall now the definition of pretriangulated dg-category.
\begin{defi}[Pretriangulated dg-category, \cite{MK}]
A dg-category $\mathcal{D}$ is called pretriangulated if, for every twisted complex $K\in PreTr(\mathcal{D})$, the dg-functor $\alpha(K)$ is representable, i.e. it is isomorphic, as a dg-functor, to $h(X)$ for some object $X\in \mathcal{D}$.
\end{defi}
A consequence of this definition is the following proposition \cite{MK}.
\begin{prop}
If $\mathcal{D}$ is a pretriangulated dg-category, then the dg-functor $\epsilon: \mathcal{D}\to PreTr(\mathcal{D})$ is a quasi-equivalence of dg-categories. 
\end{prop}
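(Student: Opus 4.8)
The plan is to verify the two defining conditions of a quasi-equivalence for $\epsilon\colon\mathcal{D}\to PreTr(\mathcal{D})$: that it is quasi-fully faithful, i.e. induces quasi-isomorphisms $Hom^{\bullet}_{\mathcal{D}}(X,Y)\to Hom^{\bullet}_{PreTr(\mathcal{D})}(\epsilon(X),\epsilon(Y))$ on all mapping complexes, and that the induced functor $H^0(\epsilon)\colon H^0(\mathcal{D})\to H^0(PreTr(\mathcal{D}))$ is essentially surjective. The first condition holds for an arbitrary dg-category and does not use pretriangulatedness; all the content sits in the second, where the hypothesis enters.

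For the first condition I would argue by a direct computation. The twisted complex $\epsilon(X)$ is concentrated in degree $0$, with $(\epsilon(X))_0=X$ and vanishing twist $q=0$. Hence in the formula $Hom^{k}_{PreTr(\mathcal{D})}(K,K')=\bigoplus_{l+j-i=k}Hom^{l}_{\mathcal{D}}(K_i,K'_j)$ only the summand $i=j=0$ survives, giving $Hom^{k}_{PreTr(\mathcal{D})}(\epsilon(X),\epsilon(Y))=Hom^{k}_{\mathcal{D}}(X,Y)$. Moreover, since the twists of $\epsilon(X)$ and $\epsilon(Y)$ vanish, the twisted differential of $PreTr(\mathcal{D})$ collapses to the internal differential $d$ of $\mathcal{D}$. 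Thus $\epsilon$ is not merely a quasi-isomorphism but an honest isomorphism of mapping complexes, so it is in particular quasi-fully faithful.

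For essential surjectivity I would exploit the commutative triangle $\alpha\circ\epsilon=h$ from the preceding remark, in which $h$ is the dg-Yoneda functor and both $h$ and $\alpha$ are fully faithful dg-embeddings; in particular $H^0(\alpha)$ is a fully faithful functor and therefore reflects isomorphisms. Now fix an arbitrary object $K\in PreTr(\mathcal{D})$. Pretriangulatedness of $\mathcal{D}$ says precisely that $\alpha(K)$ is representable, so there is an object $X\in\mathcal{D}$ and an isomorphism of dg-functors $\alpha(K)\cong h(X)$. Using $h(X)=\alpha(\epsilon(X))$, this reads $\alpha(K)\cong\alpha(\epsilon(X))$ in $H^0$ of the functor category. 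Since $H^0(\alpha)$ reflects isomorphisms, we obtain an isomorphism $K\cong\epsilon(X)$ in $H^0(PreTr(\mathcal{D}))$. As $K$ was arbitrary, $H^0(\epsilon)$ is essentially surjective, and combined with the previous step this shows $\epsilon$ is a quasi-equivalence.

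The main obstacle is the full faithfulness of $\alpha$, which is the dg-Yoneda lemma in the setting of twisted complexes. To justify it I would write $\alpha(K)=\bigoplus_i h(K_i)[-i]$ as a graded dg-functor carrying the twisted differential $d+q$, compute $Hom^{\bullet}_{dgFun}(\alpha(K),\alpha(K'))$ by applying ordinary dg-Yoneda to each representable summand $h(K_i)[-i]$, and then check that the resulting graded space $\bigoplus_{i,j}Hom^{\bullet}_{\mathcal{D}}(K_i,K'_j)$ with its induced differential matches $Hom^{\bullet}_{PreTr(\mathcal{D})}(K,K')$ with the twist built from $q$ and $q'$. This is the only step involving genuine bookkeeping of signs, shifts, and the twisting term; once it is in place the essential surjectivity argument above is purely formal, with the representability hypothesis doing exactly the work of producing the object $X\in\mathcal{D}$ that realizes each twisted complex up to isomorphism in $H^0$.
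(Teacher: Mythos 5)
Your proposal is correct, but note that the paper itself contains no proof of this proposition: it is stated as a quoted result of Bondal--Kapranov with a citation to \cite{MK}, so there is no internal argument to compare against, and what you have written is essentially the argument of that reference. Both of your reductions are sound. For quasi-full faithfulness you correctly observe that $\epsilon$ induces an honest isomorphism of Hom complexes, since for $\epsilon(X)$, $\epsilon(Y)$ only the $i=j=0$ summand of $Hom^{k}_{PreTr(\mathcal{D})}$ survives and the vanishing twists collapse the twisted differential to the internal $d$. For essential surjectivity, the deduction is formal once $\alpha$ is known to be fully faithful: an isomorphism of dg-functors $\alpha(K)\cong h(X)=\alpha(\epsilon(X))$ descends to $H^0$ of the functor category, and a fully faithful functor reflects isomorphisms, giving $K\cong\epsilon(X)$ in $H^0(PreTr(\mathcal{D}))$. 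The one step carrying real content is exactly the one you flag, namely the twisted-complex dg-Yoneda lemma identifying $Hom^{\bullet}_{dgFun}(\alpha(K),\alpha(K'))$ with $Hom^{\bullet}_{PreTr(\mathcal{D})}(K,K')$; your outline (graded Yoneda on each summand $h(K_i)[-i]$, which is legitimate because a twisted complex has only finitely many nonzero entries, followed by matching the commutator differential built from $d+q$ and $d+q'$ against the $PreTr$ differential) is the standard proof of it, and within this paper it is implicitly asserted by the remark preceding the proposition, where $\alpha$ is drawn as a dg-embedding making the triangle commute. One caveat you inherit from the paper rather than introduce yourself: with the paper's formula $h(X)(Y)=Hom^{\bullet}_{\mathcal{D}}(X,Y)$ the triangle $\alpha\circ\epsilon=h$ does not actually commute, because $\alpha(\epsilon(X))(Y)=Hom^{\bullet}_{\mathcal{D}}(Y,X)$; the Yoneda functor must be read contravariantly, $h(X)(Y)=Hom^{\bullet}_{\mathcal{D}}(Y,X)$, and your argument tacitly (and correctly) uses this corrected reading.
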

\vspace{3 mm}
This proposition allows to transfer the shift and the cone construction to any pretriangulated dg-category $\mathcal{D}$. Namely, let $T$ the inverse equivalence of $\epsilon$, then for $X\in Ob(C)$ define its shift by $+1$ by
\begin{equation}\label{sh1}
X[1]=T(\epsilon(X)[1])
\end{equation}
where $T(\epsilon(X)[1])$ is an object of $\mathcal{D}$ representing the functor $\alpha(T(\epsilon(X)[1]))$. Similarly, if $f\in Hom^{\bullet}_{\mathcal{D}}(X,Y)$ of degree $0$, then its cone is given by 
\[
Cone(f)=T(Cone(\epsilon(f)))
\]
As twisted complexes those are given by
\[
\epsilon(X)[1]_i=
\begin{cases}
X, & \text{if $i=-1$} \\
0, & \text{otherwise}
\end{cases}
\] 
and $Cone(\epsilon(f))$ is given by
\[
 \begin{tikzpicture}
    \def\x{3}
    \def\y{-1.5}
    \node (A2_2) at (1.7*\x, 2*\y) {$(Y)_0$};
    \node (A2_1) at (1*\x, 2*\y) {$(X)_{-1}$};
   \path (A2_1) edge [->] node [auto] {$\scriptstyle{f}$} (A2_2);
    \end{tikzpicture}
\]
We have the following proposition \cite{MK}.
\begin{prop}
Let $\mathcal{D}$ a pretriangulated dg-category, then its $0$-th cohomology category $H^0(\mathcal{D})$ is triangulated with the shift functor defined in \textnormal{(\ref{sh1})}
\[
[1]: H^0(\mathcal{D})\to H^0(\mathcal{D})
\]
and class of exact triangles of the form
\[
\begin{tikzpicture}
    \def\x{1.5}
    \def\y{-1.2}
    \node (A1_1) at (0*\x, 2*\y) {$K$};
    \node (A2_2) at (1*\x, 2*\y) {$K'$};
    \node (A3_3) at (2.2*\x, 2*\y) {$Cone(f)$};
   \node (A4_4) at (3.7*\x, 2*\y) {$K[1]$};
   \path (A1_1) edge [->] node [auto] {$\scriptstyle{f}$} (A2_2);
    \path (A2_2) edge [->] node [auto] {$\scriptstyle{}$} (A3_3);
    \path (A3_3) edge [->] node [auto] {$\scriptstyle{}$} (A4_4);
     \end{tikzpicture}
\]
where $f$ is a closed morphism of degree $0$. Moreover the functor
\[
H^0(T): H^0(PreTr(\mathcal{D}))\to H^0(\mathcal{D})
\]
is an equivalence of triangulated categories.
\end{prop}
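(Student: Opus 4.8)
The plan is to transport the triangulated structure already present on $H^0(PreTr(\mathcal{D}))$ across the equivalence of homotopy categories induced by $\epsilon$. Recall from the preceding remark that, by \cite{MK}, the shift and cone constructions on $PreTr(\mathcal{D})$ make $H^0(PreTr(\mathcal{D}))$ into a triangulated category, whose distinguished triangles are (up to isomorphism) the sequences $K\to K'\to Cone(f)\to K[1]$ attached to closed degree-$0$ morphisms $f$. The previous proposition tells us that $\epsilon\colon \mathcal{D}\to PreTr(\mathcal{D})$ is a quasi-equivalence; by definition this means that $\epsilon$ induces quasi-isomorphisms on all Hom-complexes and is essentially surjective on $H^0$, which is exactly the statement that the induced functor $H^0(\epsilon)\colon H^0(\mathcal{D})\to H^0(PreTr(\mathcal{D}))$ is an equivalence of $\mathbb{K}$-linear categories, with quasi-inverse $H^0(T)$.

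The key point is to check that $H^0(\epsilon)$ intertwines the two shift functors and carries cones to cones. By the definition (\ref{sh1}) and by $Cone(f)=T(Cone(\epsilon(f)))$, the objects $X[1]$ and $Cone(f)$ in $\mathcal{D}$ become, under $\epsilon$, canonically isomorphic in $H^0(PreTr(\mathcal{D}))$ to $\epsilon(X)[1]$ and $Cone(\epsilon(f))$, since $\epsilon\circ T$ is naturally isomorphic to the identity on $H^0(PreTr(\mathcal{D}))$. Using that $\epsilon$ is quasi-fully-faithful, these object-level isomorphisms assemble into natural isomorphisms $H^0(\epsilon)\circ[1]\cong[1]\circ H^0(\epsilon)$ and $H^0(\epsilon)(Cone(f))\cong Cone(H^0(\epsilon)(f))$; in particular the operation $[1]$ on $H^0(\mathcal{D})$ is a well-defined self-equivalence.

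With these compatibilities in hand I would invoke the standard transport-of-structure principle: if $G\colon \mathcal{C}\to \mathcal{T}$ is an equivalence of additive categories, $\mathcal{T}$ is triangulated, $\mathcal{C}$ is equipped with a self-equivalence commuting with the shift of $\mathcal{T}$ under $G$, and one declares a triangle in $\mathcal{C}$ distinguished precisely when $G$ sends it to a distinguished triangle of $\mathcal{T}$, then $\mathcal{C}$ inherits a triangulated structure and $G$ becomes an exact equivalence. Applying this with $G=H^0(\epsilon)$ and $\mathcal{T}=H^0(PreTr(\mathcal{D}))$, the previous paragraph shows that the triangles $K\to K'\to Cone(f)\to K[1]$ in $H^0(\mathcal{D})$ are exactly those carried by $H^0(\epsilon)$ to distinguished triangles of $H^0(PreTr(\mathcal{D}))$. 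Hence they constitute the distinguished triangles of a triangulated structure on $H^0(\mathcal{D})$, and the quasi-inverse $H^0(T)$ is automatically an equivalence of triangulated categories.

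I expect the main obstacle to lie in the second paragraph. Because the shift and the cone on $\mathcal{D}$ are defined only through representability, hence only up to canonical isomorphism, the real content is to verify that these choices genuinely organize into a functor $[1]$ and into distinguished triangles matching, term by term and compatibly with the connecting maps, the cone sequences in $PreTr(\mathcal{D})$. This is precisely where quasi-full-faithfulness of $\epsilon$ is used; once the naturality is pinned down, the triangulated axioms (TR1)--(TR4) for $H^0(\mathcal{D})$ follow formally from those for $H^0(PreTr(\mathcal{D}))$.
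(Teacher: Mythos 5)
The paper itself offers no proof of this proposition: it is quoted directly from Bondal--Kapranov \cite{MK}, so there is no argument in the text to compare yours against. Judged on its own terms, your transport-of-structure strategy is the right (and the standard) one: $H^0(\epsilon)$ is an equivalence because $\epsilon$ is a quasi-equivalence; the definitions (\ref{sh1}) and $Cone(f)=T(Cone(\epsilon(f)))$, together with $\epsilon\circ T\cong \mathrm{Id}$ on $H^0(PreTr(\mathcal{D}))$, give the compatibilities $\epsilon(X[1])\cong\epsilon(X)[1]$ and $\epsilon(Cone(f))\cong Cone(\epsilon(f))$; and transporting the Bondal--Kapranov triangulated structure along this equivalence yields the triangulation of $H^0(\mathcal{D})$, with $H^0(T)$ automatically an exact equivalence.

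One step is genuinely incomplete. In your third paragraph you assert that the triangles $K\to K'\to Cone(f)\to K[1]$ are \emph{exactly} the ones carried by $H^0(\epsilon)$ to distinguished triangles, citing ``the previous paragraph''; but that paragraph only gives one inclusion (stated-form triangles have distinguished image). Since the transported structure declares a triangle distinguished precisely when its image is distinguished, while the proposition describes the distinguished class as (the isomorphism closure of) the cone triangles, you also need the converse: if a triangle $X\xrightarrow{u} Y\to Z\to X[1]$ in $H^0(\mathcal{D})$ has distinguished image, then it is isomorphic to a cone triangle. This is standard but not free: lift $u$ to a closed degree-$0$ morphism $\tilde u$ in $\mathcal{D}$, form the cone triangle $X\to Y\to Cone(\tilde u)\to X[1]$, note that both image triangles in $H^0(PreTr(\mathcal{D}))$ are distinguished with the same base morphism $\epsilon(u)$, apply TR3 and the five lemma there to get an isomorphism of triangles extending $(\mathrm{id},\mathrm{id})$, and pull it back to $H^0(\mathcal{D})$ by full faithfulness of $H^0(\epsilon)$. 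With that argument inserted, your proof is complete.
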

Let $\mathcal{D}$ be a pretriangulated dg-category. Then there are quasi-isomorphims of complexes
\[
Hom^{\bullet}_{\mathcal{D}}(X[1],Y)\simeq Hom^{\bullet}_{\mathcal{D}}(X,Y[-1])\simeq Hom^{\bullet}_{\mathcal{D}}(X,Y)[-1]
\]
and, if $f: X \to Y$ is a closed degree 0 morphism and $Z$ is any other object of $\mathcal{D}$ then 
\[
Hom^k_{\mathcal{D}}(Cone(f),Z)=Hom^k_{\mathcal{D}}(Y,Z)\oplus Hom^{k-1}_{\mathcal{D}}(X,Z)
\]
with differential
\[
d^k=\begin{vmatrix} d^k_{(Y,Z)} & 0 \\ (-1)^k(-\circ f) & d^{k-1}_{(X,Z)} \end{vmatrix}
\]
where $d^k_{(X,Y)}=d_{Hom ^k_{\mathcal{D}}(X,Y)}$, for any $X,Y\in \mathcal{D}$. Similarly
\[
Hom ^k_{\mathcal{D}}(Z,Cone(f))=Hom ^k_{\mathcal{D}}(Z,Y)\oplus Hom^{k+1}_{\mathcal{D}}(Z,X)
\]
with differential
\[
d^k=\begin{vmatrix} d^k_{(Z,Y)} & (-\circ f) \\ 0 & d^{k+1}_{(Z,X)} \end{vmatrix}
\]
\newpage
We recall now the notion of stable $\infty$-category.
\begin{defi} [Pointed $\infty$-category]
An $\infty$-category $X$ is pointed if it has a zero object, that is an object $0 \in X_0$ such that, for every object $A\in X_0$
\[
Map_{X}(A,0)\simeq \ast \simeq Map_{X}(0,A)
\]
\end{defi}
 \begin{defi}[Fiber and cofiber sequence]
Given a pointed $\infty$-category $X$, a triangle is a diagram $\Delta^1 \times \Delta^1\to X$ of the form 
\[
 \begin{tikzpicture}
    \def\x{1.5}
    \def\y{-1.2}
    \node (A2_2) at (2*\x, 2*\y) {$C$};
    \node (A2_1) at (2*\x, 1*\y) {$B$};
    \node (A1_2) at (1*\x, 2*\y) {$0$};
   \node (A1_1) at (1*\x, 1*\y) {$A$};
   \path (A2_1) edge [->] node [auto] {$\scriptstyle{g}$} (A2_2);
    \path (A1_2) edge [->] node [auto,swap] {$\scriptstyle{}$} (A2_2);
       \path (A1_1) edge [->] node [auto] {$\scriptstyle{f}$} (A2_1);
   \path (A1_1) edge [->] node [auto] {$\scriptstyle{}$} (A1_2);

    \end{tikzpicture}
\]
We say that a triangle is a fiber sequence, the fiber of $g$, if it it homotopy cartesian and, dually, a cofiber sequence, the cofiber of $f$, if it is homotopy cocartesian.
\end{defi}
\begin{defi}[Stable $\infty$-category, \cite{Lurie 2}]
An $\infty$-category $X$ is stable if
\begin{itemize}
\item $X$ is pointed
\item Every morphism admits fiber and cofiber
\item A triangle is a fiber sequence iff it is a cofiber sequence
\end{itemize}
\end{defi}
\begin{rem}\normalfont
Let $X$ be a stable $\infty$-category, then for an object $A$ we can consider the diagram
\[
 \begin{tikzpicture}
    \def\x{1.5}
    \def\y{-1.2}
    \node (A2_2) at (2*\x, 2*\y) {$A$};
    \node (A2_1) at (2*\x, 1*\y) {$0$};
    \node (A1_2) at (1*\x, 2*\y) {$0$};
   \path (A2_1) edge [->] node [auto] {$\scriptstyle{}$} (A2_2);
    \path (A1_2) edge [->] node [auto,swap] {$\scriptstyle{}$} (A2_2);

    \end{tikzpicture}
\]
because $X$ is stable we can take the homotopy fiber of such
\[
 \begin{tikzpicture}
    \def\x{1.5}
    \def\y{-1.2}
    \node (A2_2) at (2*\x, 2*\y) {$A$};
    \node (A2_1) at (2*\x, 1*\y) {$0$};
    \node (A1_2) at (1*\x, 2*\y) {$0$};
   \node (A1_1) at (1*\x, 1*\y) {$\Omega A$};
   \path (A2_1) edge [->] node [auto] {$\scriptstyle{}$} (A2_2);
    \path (A1_2) edge [->] node [auto,swap] {$\scriptstyle{}$} (A2_2);
       \path (A1_1) edge [->] node [auto] {$\scriptstyle{}$} (A2_1);
   \path (A1_1) edge [->] node [auto] {$\scriptstyle{}$} (A1_2);

    \end{tikzpicture}
\]
Dually, we can consider the diagram
\[
 \begin{tikzpicture}
    \def\x{1.5}
    \def\y{-1.2}
    \node (A2_1) at (2*\x, 1*\y) {$0$};
    \node (A1_2) at (1*\x, 2*\y) {$0$};
   \node (A1_1) at (1*\x, 1*\y) {$A$};
       \path (A1_1) edge [->] node [auto] {$\scriptstyle{}$} (A2_1);
   \path (A1_1) edge [->] node [auto] {$\scriptstyle{}$} (A1_2);
    \end{tikzpicture}
\]
and take its homotopy cofiber
\[
 \begin{tikzpicture}
    \def\x{1.5}
    \def\y{-1.2}
    \node (A2_2) at (2*\x, 2*\y) {$\Sigma A$};
    \node (A2_1) at (2*\x, 1*\y) {$0$};
    \node (A1_2) at (1*\x, 2*\y) {$0$};
   \node (A1_1) at (1*\x, 1*\y) {$X$};
   \path (A2_1) edge [->] node [auto] {$\scriptstyle{}$} (A2_2);
    \path (A1_2) edge [->] node [auto,swap] {$\scriptstyle{}$} (A2_2);
       \path (A1_1) edge [->] node [auto] {$\scriptstyle{}$} (A2_1);
   \path (A1_1) edge [->] node [auto] {$\scriptstyle{}$} (A1_2);
    \end{tikzpicture}
\]
\newpage
Those construction allow to define $\infty$-functors
\[
\Sigma ,\Omega : X\to X 
\]
that are equivalences of $\infty$-categories \cite{Lurie 2}. Moreover, the axioms of stable $\infty$-category allow to define a notion of exact triangle in the homotopy category $h(X)$. Namely, a diagram in $h(X)$
\[
 \begin{tikzpicture}
    \def\x{1.5}
    \def\y{-1.2}
    \node (A1_1) at (2*\x, 2*\y) {$A$};
    \node (A2_2) at (3*\x, 2*\y) {$B$};
    \node (A3_3) at (4*\x, 2*\y) {$C$};
   \node (A4_4) at (5.5*\x, 2*\y) {$A[1]=\Sigma A$};
   \path (A1_1) edge [->] node [auto] {$\scriptstyle{f}$} (A2_2);
    \path (A2_2) edge [->] node [auto] {$\scriptstyle{g}$} (A3_3);
       \path (A3_3) edge [->] node [auto] {$\scriptstyle{h}$} (A4_4);
    \end{tikzpicture}
\]
 is a distinguished triangle if there exists a diagram $\Delta^1 \times \Delta^2 \to X$ of the type
 \[
 \begin{tikzpicture}
    \def\x{1.5}
    \def\y{-1.2}
    \node (A2_2) at (2*\x, 2*\y) {$D$};
    \node (A2_1) at (2*\x, 1*\y) {$0$};
    \node (A1_2) at (1*\x, 2*\y) {$C$};
   \node (A1_1) at (1*\x, 1*\y) {$B$};
  \node (A0_2) at (0*\x, 2*\y) {$0'$};
   \node (A0_1) at (0*\x, 1*\y) {$A$};
   \path (A2_1) edge [->] node [auto] {$\scriptstyle{}$} (A2_2);
   \path (A1_2) edge [->] node [auto,swap] {$\scriptstyle{\tilde{h}}$} (A2_2);
   \path (A1_1) edge [->] node [auto] {$\scriptstyle{}$} (A2_1);
   \path (A1_1) edge [->] node [auto] {$\scriptstyle{\tilde{g}}$} (A1_2);
      \path (A0_2) edge [->] node [auto] {$\scriptstyle{}$} (A1_2);
   \path (A0_1) edge [->] node [auto] {$\scriptstyle{\tilde{f}}$} (A1_1);
   \path (A0_1) edge [->] node [auto] {$\scriptstyle{}$} (A0_2);
    \end{tikzpicture}
\]
Such that
\begin{itemize}
\item 0 and 0' are both zero objects
\item Both squares are pushout diagrams in $X$
\item The morphisms $\tilde{f}$ an $\tilde{g}$ represent $f$ and $g$ respectively
\item The map $h$ is the composition with the homotopy class of $\tilde{h}$ with an equivalence $D\simeq A[1]$.
\end{itemize} 
We have the following proposition \cite{Lurie 2} 
\begin{prop}
Let $X$ be a stable $\infty$-category, then its homotopy category $h(X)$ with the shift functor
\[
\Sigma : h(X) \to h(X)
\]
and the class of distinguished triangles described in the previous remark is a triangulated category in the sense of Verdier \normalfont \cite{Ver}.
\end{prop}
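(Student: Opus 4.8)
The plan is to verify that $h(X)$ satisfies Verdier's axioms \cite{Ver}, using three structural facts about the stable $\infty$-category $X$: that $\Sigma$ and $\Omega$ are mutually inverse equivalences (as recalled in the previous remark), that every mapping space in $X$ is canonically an infinite loop space, and that a square $\Delta^1\times\Delta^1\to X$ is a pushout if and only if it is a pullback. This last fact is the basic consequence of the stability axioms that makes fiber and cofiber sequences interchangeable and lets the pasting lemmas for pushouts and pullbacks be applied simultaneously; I would record it at the outset, since it underlies every subsequent step.

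First I would equip $h(X)$ with the structure of an additive category. The zero object of $X$ is a zero object of $h(X)$, and since $X$ admits finite products and coproducts which coincide (the biproduct being assembled from the zero maps and the pushout/pullback interchange), $h(X)$ has biproducts. The abelian group structure on $\mathrm{Hom}_{h(X)}(A,B)=\pi_0\,\mathrm{Map}_X(A,B)$ comes from the natural equivalence
\[
\mathrm{Map}_X(A,B)\simeq \mathrm{Map}_X(A,\Omega\Sigma B)\simeq \Omega\,\mathrm{Map}_X(A,\Sigma B),
\]
obtained from $\Omega\Sigma B\simeq B$ and from the fact that mapping out of $A$ carries the pullback defining $\Omega$ to the corresponding pullback of spaces; iterating exhibits every mapping space as a double (hence infinite) loop space, so $\pi_0$ is an abelian group and composition is bilinear. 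The shift is $[1]:=\Sigma$, which is an autoequivalence of $h(X)$ with inverse induced by $\Omega$, precisely because $\Sigma$ and $\Omega$ are inverse equivalences of $X$.

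Next I would check the triangle axioms (TR1)--(TR3). Invariance of distinguished triangles under isomorphism is immediate from the definition. The identity triangle $A\xrightarrow{\mathrm{id}}A\to 0\to\Sigma A$ is witnessed by the horizontal pasting, in the defining $\Delta^1\times\Delta^2$ diagram, of the pushout square on $A\xrightarrow{\mathrm{id}}A$ (whose cofiber is $0$) with the pushout square computing $\Sigma A$. Completion of an arbitrary $f\colon A\to B$ to a distinguished triangle is exactly the existence of the cofiber, followed by a second pushout producing the connecting map to $\Sigma A$, the pasting lemma identifying the terminal corner with $\Sigma A$. For the rotation axiom (TR2) I would observe that the right-hand pushout square of the defining diagram already exhibits $\Sigma A$ as $\mathrm{cofib}(g)$; appending one further pushout square to the right produces the diagram witnessing that $B\xrightarrow{g}C\xrightarrow{h}\Sigma A\to\Sigma B$ is distinguished, the sign $-\Sigma f$ arising from comparing the two canonical identifications of the new corner with $\Sigma B$. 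For (TR3) the filling morphism is produced from the universal property of pushouts: a map of the left-hand squares of two distinguished triangles induces a map on cofibers and then on suspensions, giving the required morphism of triangles.

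The principal obstacle is the octahedral axiom (TR4). Given composable $f\colon A\to B$ and $g\colon B\to C$, I would construct a $3\times 3$ diagram (a functor $\Delta^2\times\Delta^2\to X$) all of whose small squares are pushouts and whose relevant corners are zero; the outer edges then realize the cofiber sequences of $f$, $g$ and $g\circ f$, and repeated use of the pasting lemma for pushouts---that a rectangle built from a pushout square is a pushout iff the remaining square is---forces all the intermediate squares to be pushouts, so that the induced maps among $\mathrm{cofib}(f)$, $\mathrm{cofib}(g)$ and $\mathrm{cofib}(g\circ f)$ assemble into the octahedron, with the concluding distinguished triangle $\mathrm{cofib}(f)\to\mathrm{cofib}(g\circ f)\to\mathrm{cofib}(g)\to\Sigma\,\mathrm{cofib}(f)$ read off from the diagram. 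The delicate points are the coherent construction of this diagram inside the $\infty$-category, rather than merely up to homotopy, and the bookkeeping of the identifications with $\Sigma A$ so that the signs match Verdier's conventions; everything else reduces to the pushout/pullback interchange and the universal properties already used for (TR1)--(TR3). The final result is then Lurie's \cite{Lurie 2}.
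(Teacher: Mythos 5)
Your proposal is correct in outline, but note that the paper does not prove this proposition at all: it is stated as a recalled result, with the proof deferred entirely to Lurie \cite{Lurie 2} (it is Theorem 1.1.2.14 of \emph{Higher Algebra}), since the paper only needs it as background for Section 4. What you have written is essentially an outline of Lurie's own argument: additivity of $h(X)$ from the identification $\mathrm{Map}_X(A,B)\simeq \Omega\,\mathrm{Map}_X(A,\Sigma B)$ and its iterates, (TR1)--(TR3) from existence of cofibers plus the pasting lemma for pushout squares, and (TR4) from a grid of pushout squares, all driven by the pushout/pullback interchange that characterizes stability. This is the right decomposition, and each step you describe is sound. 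The honest caveat is that the two points you explicitly defer --- constructing the (TR4) diagram coherently as a functor into $X$ rather than square-by-square in $h(X)$, and the sign bookkeeping in the rotation axiom (TR2), where $-\Sigma f$ arises from comparing the two identifications of the cofiber of $C\to 0$ with $\Sigma B$ --- are precisely where the real labor of the complete proof lies; a referee would not accept the sketch as a substitute for carrying them out. So relative to the paper your proposal buys an exposition of the mechanism behind the cited theorem, at the cost of stopping exactly where the citation does the work; as a blind reconstruction of the standard proof strategy it is accurate.
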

\end{rem}
\newpage
\subsection{Homotopy limits and model theoretic properties of the Dold-Kan correspondence}
Let $\mathcal{C}$ be a small category and $I$ any (index) category. There is a functor
\[
(-)_{\ast} : \mathcal{C} \to Fun(I,\mathcal{C})
\] 
taking an object $x\in \mathcal{C}$ to the constant functor with value $x$. We say that $\mathcal{C}$ has $I$-shaped limits if the functor $(-)_{\ast}$ has  a right adjoint
\[
lim : Fun(I,\mathcal{C}) \to \mathcal{C}
\]
Dually, $\mathcal{C}$ has $I$-shaped colimits if $(-)_{\ast}$ has a left adjoint
\[
colim : Fun(I,\mathcal{C}) \to \mathcal{C}
\]
Assume that $\mathcal{C}$ has a structure of model category. We want to have a model structure of the category of functors $Fun(I,\mathcal{C})$ in a way that the limit-colimit adjunctions become Quillen adjunctions. In this situation, we can consider the total derived functors of the adjunction and define the homotopy limit of an $I$-shaped diagram as
\[
holim(X)=\mathbb{R}(lim)(X)=lim(P(X))
\]
where $P(X)$ is the fibrant replacement for the functor $X$ in the model structure on $Fun(I,\mathcal{C})$. A model structure with those properties on the functor category $Fun(I,\mathcal{C})$ always exists if the indexing category $I$ is Reedy \cite{Hir}. 
A Reedy category is a category together with a choice of two subcategories $\overrightarrow{I}$ and $\overleftarrow{I}$ called the direct subcategory and the inverse subcategory satisfying certain axioms. Once those subcategories are fixed, if $X\in Fun(I,\mathcal{C})$ is any functor and $i\in Ob(I)$ is any object, we can define the latching object of $X$ at $i$ 
\[
L_i(X)=\underset{d(\overrightarrow{I} \downarrow i)}{colim} X
\]
where $d(\overrightarrow{I} \downarrow i)$ is the over subcategory at $i$ in $\overrightarrow{I}$ with the identity at $i$ removed. Dually, the matching object at $i$ is defined
\[
M_i(X)=\underset{d(i \uparrow \overleftarrow{I})}{lim} X
\]
where $d(i \uparrow \overleftarrow{I})$ is the under subcategory at $i$ in $\overrightarrow{I}$ with the identity at $i$ removed. Now, given a morphism of functors $f:X\to Y$, the relative latching morphism at $i$ is defined as
\[
L_i(f): X_i \underset{L_i(X)}{\coprod} L_i(Y) \to Y_i
\]
and similarly the matching morphism at $i$
\[
M_i(f): X_i \to Y_i \underset{M_i(Y)}{\times} M_i(X)
\]
\newpage
\begin{defi}
Let $\mathcal{C}$ be a model category and $I$ an indexing Reedy category. The Reedy model structure on $Fun(I,\mathcal{C})$ is the model structure for which a morphism $f:X\to Y$ of functors is:
\begin{itemize}
\item A weak equivalence if it is an objectwise weak equivalence in $\mathcal{C}$.
\item A cofibration if the relative latching morphism is a cofibration in $\mathcal{C}$ for every object $i\in I$.
\item A fibration if the relative matching morphism is a fibration in $\mathcal{C}$, for every object $i\in I$.
\end{itemize}
\end{defi}
\begin{ex}\normalfont
We describe the Reedy model structure when $I$ is the category
\[
  \begin{tikzpicture}
    \def\x{1.5}
   \def\y{-1.2}
    \node (A2_2) at (2*\x, 2*\y) {$0$};
    \node (A2_1) at (2*\x, 1*\y) {$1$};
    \node (A1_2) at (1*\x, 2*\y) {$2$};
   \path (A2_1) edge [->] node [auto] {$\scriptstyle{}$} (A2_2);
    \path (A1_2) edge [->] node [auto,swap] {$\scriptstyle{}$} (A2_2);
    \end{tikzpicture}
  \]
to compute homotopy fibre products. We set $\overleftarrow{I}=I$ and $\overrightarrow{I}$ to be subcategory with the same objects as $I$ and morphisms given by just the identities. Then for a functor $X\in Fun(I,\mathcal{C})$,
\[
M_0(X)=\ast_{in}
\]
where $\ast_{in}$ is the initial object of $\mathcal{C}$ and that
\[
M_i(X)=X_0
\]
for $i=1,2$. The latching objects are all equal to the terminal object $\ast_{ter}$. If $f:X\to Y$ is a morphism of functors, then the relative matching morphisms are given by
\[
M_0(f): X_0 \to Y_0
\] 
and for $i=1,2$
\[
M_i(f): X_i \to Y_i \underset{Y_0}{\times}X_0
\]
The relative latching morphism are $L_i(f)=f_i$ for $i=0,1,2$. In particular a morphism $f$ is a weak equivalence (cofibration) if it is an objectwise weak equivalence (cofibration) and fibrations are morphisms for which $X_i\to M_i(f)$ are fibrations. In particular, cofibrant diagrams are diagrams in which every object is cofibrant and fibrant diagrams are diagrams for which $X_0$ is a fibrant object and the morphisms $X_1\to X_0$, $X_2\to X_0$ are fibrations. In particular, given a diagram
\[
 \begin{tikzpicture}
    \def\x{1.5}
    \def\y{-1.2}
    \node (A2_2) at (2*\x, 2*\y) {$X_1$};
    \node (A2_1) at (2*\x, 1*\y) {$X_0$};
    \node (A1_2) at (1*\x, 2*\y) {$X_2$};
   \path (A2_1) edge [->] node [auto] {$\scriptstyle{f_{01}}$} (A2_2);
    \path (A1_2) edge [->] node [auto,swap] {$\scriptstyle{f_{21}}$} (A2_2);
    \end{tikzpicture}
\]
then a fibrant replacement of it is the diagram
\[
 \begin{tikzpicture}
    \def\x{1.5}
    \def\y{-1.2}
    \node (A2_2) at (2*\x, 2*\y) {$P(X_1)$};
    \node (A2_1) at (2*\x, 1*\y) {$P(X_0)$};
    \node (A1_2) at (0.8*\x, 2*\y) {$P(X_2)$};
   \path (A2_1) edge [->] node [auto] {$\scriptstyle{\tilde{f}_{01}}$} (A2_2);
    \path (A1_2) edge [->] node [auto,swap] {$\scriptstyle{\tilde{f}_{21}}$} (A2_2);
    \end{tikzpicture}
\]
where $P$ is the fibrant replacement on $\mathcal{C}$. If $X_1$ is already a fibrant object of $\mathcal{C}$, then a fibrant replacement is given by 
\[
\begin{tikzpicture}
    \def\x{1.5}
    \def\y{-1.2}
    \node (A2_2) at (2*\x, 2*\y) {$X_1$};
    \node (A2_1) at (2*\x, 1*\y) {$P(f_{01})$};
    \node (A1_2) at (1*\x, 2*\y) {$P(f_{21})$};
   \path (A2_1) edge [->>] node [auto] {$\scriptstyle{}$} (A2_2);
    \path (A1_2) edge [->>] node [auto,swap] {$\scriptstyle{}$} (A2_2);
    \end{tikzpicture}
\]
where 
\[
  \begin{tikzpicture}
    \def\x{1.5}
    \def\y{-1.2}
    \node (A1_1) at (0.5*\x, 2*\y) {$X_i$};
    \node (A2_2) at (2*\x, 2*\y) {$X_1$};
    \node (A1_2) at (1.25*\x, 1*\y) {$P(f_{i1})$};
   \path (A1_1) edge [right hook->] node [above] {$\scriptstyle{\simeq}$} (A1_2);
    \path (A1_2) edge [->>] node [auto] {$\scriptstyle{}$} (A2_2);
    \path (A1_1) edge [->] node [auto] {$\scriptstyle{f_{i1}}$} (A2_2);
     \end{tikzpicture}
  \]
is a trivial cofibration-fibration factorization of the morphism $f_{i1}$, $i=0,2$. Hence, if $X_1$ is a fibrant object in $\mathcal{C}$, then
\[
X_2 \underset{X_1} {\overset{h} \times} X_0 \simeq  P(f_{21}) \underset{X_1} {\times} P(f_{01})
\]
\end{ex}
\begin{ex}\normalfont
Let $\mathcal{C}=Ch_{\bullet}^{\ge0}(Mod_{\mathbb{K}})$. This category has a model structure, called the projective model structure. Weak equivalences are quasi-isomorphisms of chain complexes, fibrations are chain maps which are surjective in positive degree and cofibrations are monomorphims of chain complexes with projective cokernel in every degree. In this model structure, every chain complex is fibrant. Moreover, given a chain map $f_{\bullet}: A_{\bullet} \to B_{\bullet}$, a trivial cofibration-fibration factorization of this map is given by
\[
  \begin{tikzpicture}
    \def\x{1.5}
    \def\y{-1.2}
    \node (A1_1) at (0.5*\x, 2*\y) {$A_{\bullet}$};
    \node (A2_2) at (2*\x, 2*\y) {$B_{\bullet}$};
    \node (A1_2) at (1.25*\x, 1*\y) {$P(f_{\bullet})$};
   \path (A1_1) edge [right hook->] node [above] {$\scriptstyle{i    \simeq}$} (A1_2);
    \path (A1_2) edge [->>] node [auto] {$\scriptstyle{p}$} (A2_2);
    \path (A1_1) edge [->] node [auto] {$\scriptstyle{f_{\bullet}}$} (A2_2);
     \end{tikzpicture}
  \]
where $P(f_{\bullet})$ is the chain complex
\[
P(f_{\bullet})_n=A_n\bigoplus B_{n+1}\bigoplus B_n
\]
for $n>0$ and
\[
P(f_{\bullet})_0=A_0\bigoplus B_{1}\bigoplus D_0
\]
where $D_0\subseteq B_0$ given by equations $b_0=d(b_1)+f_0(a_0)$ for $b_1\in B_1$ and $a_0\in A_0$. The differential is given by
\[
d_n=\begin{vmatrix} d_{A_n} & 0 & 0 \\ -f_n & -d_{B_{n+1}} & Id_{B_n} \\ 0 & 0 & d_{B_n} \end{vmatrix}
\]
and the morphisms $i$ and $p$ by
\[
i_n=\begin{vmatrix} Id_{A_n} & 0 & f_n \end{vmatrix}
\]
\[
p_n=\begin{vmatrix} 0 \\ 0 \\ Id_{B_n} \end{vmatrix}
\]
One can easily check that $H_{\ast}(P(f_{\bullet}))\simeq H_{\ast}(A_{\bullet})$ and that $p$ is a fibration, being degreewise surjective. The fact that $i$ is a cofibration follows from the fact that every $\mathbb{K}$-module is free when $\mathbb{K}$ is a field and hence projective. We compute homotopy fibre products in the model category $Ch_{\bullet}^{\ge0}(Mod_{\mathbb{K}})$. Let $X\in Fun(I,Ch_{\bullet}^{\ge0}(Mod_{\mathbb{K}}))$
\[
 \begin{tikzpicture}
    \def\x{1.5}
    \def\y{-1.2}
    \node (A2_2) at (2*\x, 2*\y) {$X^0_{\bullet}$};
    \node (A2_1) at (2*\x, 1*\y) {$X^1_{\bullet}$};
    \node (A1_2) at (1*\x, 2*\y) {$X^2_{\bullet}$};
   \path (A2_1) edge [->] node [auto] {$\scriptstyle{f^{01}}$} (A2_2);
    \path (A1_2) edge [->] node [auto,swap] {$\scriptstyle{f^{02}}$} (A2_2);
    \end{tikzpicture}
\]
Then a fibrant replacement of this diagram is given by 
\[
 \begin{tikzpicture}
    \def\x{1.5}
    \def\y{-1.2}
    \node (A2_2) at (2*\x, 2*\y) {$X^0_{\bullet}$};
    \node (A2_1) at (2*\x, 1*\y) {$P(f^{01})$};
    \node (A1_2) at (1*\x, 2*\y) {$P(f^{02})$};
   \path (A2_1) edge [->] node [auto] {$\scriptstyle{p^{01}}$} (A2_2);
    \path (A1_2) edge [->] node [auto,swap] {$\scriptstyle{p^{02}}$} (A2_2);
    \end{tikzpicture}
\]
In particular the homotopy limit is computed as 
\[
X^1_{\bullet} \underset{X^0_{\bullet}} {\overset{h} \times} X^2_{\bullet} \simeq  P(f^{01}) \underset{X^0_{\bullet}} {\times} P(f^{02})
\]
For instance, the loop object of a chain complex 
\[
\Omega(X_{\bullet})=0 \underset{X_{\bullet}} {\overset{h} \times} 0
\] 
is given by the limit of the diagram
\[
 \begin{tikzpicture}
    \def\x{1.5}
    \def\y{-1.2}
    \node (A2_2) at (2*\x, 2*\y) {$X^0_{\bullet}$};
    \node (A2_1) at (2*\x, 1*\y) {$P(0)$};
    \node (A1_2) at (1*\x, 2*\y) {$P(0)$};
   \path (A2_1) edge [->] node [auto] {$\scriptstyle{p}$} (A2_2);
    \path (A1_2) edge [->] node [auto,swap] {$\scriptstyle{p}$} (A2_2);
    \end{tikzpicture}
\]
In particular we find that 
\[
P(0) \underset{X_{\bullet}} {\times} P(0)_n \simeq X_{n+1}
\]
for $n>0$ and 
\[
P(0) \underset{X_{\bullet}} {\times} P(0)_0 \simeq Ker(d_{X_1})
\]
hence $\Omega(X_{\bullet})\simeq X[-1]$.
\end{ex}
\newpage
\begin{rem}\normalfont
Recall that the category $S(Mod_{\mathbb{K}})$ has a model structure, induced by the forgetful functor to the model category of simplicial sets with the Quillen model structure \cite{Hov}. Weak equivalences are maps of simplical $\mathbb{K}$-modules whose induced map on the underlying simplicial sets is a weak equivalences, fibrations are maps whose induced map on the underlying simplicial sets is a Kan fibration. Cofibrations are retract of maps satisfying the right lifting property with respect to trivial fibrations. According to S.Schwede-B.Shipley \cite{Schw}, the functors of the Dold-Kan correspondence are both left and right adjoint of a Quillen adjunction between $S(Mod_{\mathbb{K}})$ with the described model structure and $Ch_{\bullet}^{\ge0}(Vect_{\mathbb{K}})$ with the projective model structure. This can be rephrased saying that the computation of homotopy limits in the model category $S(Vect_{\mathbb{K}})$ is equivalent, under the Dold-Kan correspondence, to the computation of homotopy limits in the model category $Ch_{\bullet}^{\ge0}(Vect_{\mathbb{K}})$. Namely, let
\[
N: S(Vect_{\mathbb{K}}) \to Ch_{\bullet}^{\ge0}(Vect_{\mathbb{K}})   
\]
the normalized chain complex functor of the Dold-Kan correspondence and 
\[
N_{\ast}: Fun(I,S(Vect_{\mathbb{K}})) \to Fun(I,Ch_{\bullet}^{\ge0}(Vect_{\mathbb{K}}))  
\]
the induced functor on the diagram categories. Then for $Y\in Fun(I,Ch_{\bullet}^{\ge0}(Vect_{\mathbb{K}}))$ we have
\[
N_{\ast}(P(Y))=P(N_{\ast}(Y))
\]
where $P$ is the fibrant replacement. This because by \cite{Schw}, $N$ is the right and left adjoint of a Quillen equivalence. In particular, it preserves fibrations and trivial cofibrations and hence it preserves factorizations. Hence, for $Y\in Fun(I,S(Vect_{\mathbb{K}}))$, we have
\[
holim(\mathbb{R}N_{\ast}(Y))=lim(P(N_{\ast}(P(Y))))=lim(P(N_{\ast}(Y)))
\]
and hence 
\[
holim(Y)\simeq \mathbb{L}DK(lim(P(N_{\ast}(Y))))\simeq DK(Q(lim(P(N_{\ast}(Y)))))\simeq DK(lim(P(N_{\ast}(Y))))
\]
because every chain complex is cofibrant in the projective model structure. In particular, if $Y=DK_{\ast}(X)$, for some $X\in Fun(I,Ch_{\bullet}^{\ge0}(Vect_{\mathbb{K}}))$ then we find that
\[
holim(Y)\simeq DK(lim(P(X)))
\]
\end{rem}
\newpage
\subsection{Pretriangulated dg-categories give stable $\infty$-categories}
We prove now that given a pretriangulated dg-category $\mathcal{D}$, $N^{big}_{dg}(\mathcal{D})$, and hence $N^{sm}_{dg}(\mathcal{D})$, are stable $\infty$-categories.
\begin{thm}
Let $\mathcal{D}$ be a pretriangulated dg-category, then the big nerve $N^{big}_{dg}(\mathcal{D})$ is a stable $\infty$-category. Moreover, $H^0(\mathcal{D})$ is equivalent to $h(N^{big}_{dg}(\mathcal{D}))$ as triangulated categories.
\end{thm}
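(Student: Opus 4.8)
The plan is to verify directly the three axioms in the definition of a stable $\infty$-category for $X=N^{big}_{dg}(\mathcal{D})$, exploiting that the mapping spaces of $\mathcal{D}_{\Delta}$ are underlying simplicial sets of simplicial $\mathbb{K}$-modules, so that homotopy limits and colimits can be computed through the Dold--Kan correspondence as in the previous subsection. Throughout I would use that, since $\mathcal{D}_{\Delta}$ is a fibrant simplicial category, the nerve mapping spaces satisfy $Map_{X}(A,B)\simeq Map_{\mathcal{D}_{\Delta}}(A,B)=DK(\tau_{\ge 0}(Hom^{\bullet}_{\mathcal{D}}(A,B)^{op}))$, together with the criterion that a square in $X$ is a homotopy pullback (resp.\ pushout) precisely when $Map_{X}(Z,-)$ (resp.\ $Map_{X}(-,Z)$) carries it to a homotopy pullback of spaces for every object $Z$.

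First I would check pointedness. Because $\mathcal{D}$ is pretriangulated, the zero twisted complex is representable by an object $0\in\mathcal{D}$, and for every $A$ the complexes $Hom^{\bullet}_{\mathcal{D}}(A,0)$ and $Hom^{\bullet}_{\mathcal{D}}(0,A)$ are acyclic. Hence $\tau_{\ge 0}$ of their opposites is acyclic, the associated Dold--Kan simplicial modules are contractible, and $Map_{X}(A,0)\simeq\ast\simeq Map_{X}(0,A)$, so $0$ is a zero object. Next, for the existence of fibers and cofibers: given $f\colon A\to B$ (a closed degree $0$ morphism of $\mathcal{D}$ up to homotopy), the object $Cone(f)$ exists in $\mathcal{D}$ by pretriangulation. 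Applying $Map_{X}(-,Z)$ to the square with corners $A,B,0,Cone(f)$ and passing through Dold--Kan, the explicit description of $Hom^{\bullet}_{\mathcal{D}}(Cone(f),Z)$ as a mapping cone of complexes (recalled before this subsection) exhibits the square as a homotopy pushout of truncated Hom-complexes, realizing $Cone(f)$ as the cofiber of $f$; dually $Cone(g)[-1]$ is the fiber of any $g$.

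For the equivalence of the two notions I would identify the loop and suspension functors on $X$ with the shifts $[-1]$ and $[1]$ of $\mathcal{D}$. Using the quasi-isomorphisms $Hom^{\bullet}_{\mathcal{D}}(X[1],Y)\simeq Hom^{\bullet}_{\mathcal{D}}(X,Y)[-1]$ and $Hom^{\bullet}_{\mathcal{D}}(X,Y[1])\simeq Hom^{\bullet}_{\mathcal{D}}(X,Y)[1]$ (valid because $\mathcal{D}$ is pretriangulated) together with the computation $\Omega(X_{\bullet})\simeq X[-1]$ from the previous subsection, the functors $\Omega$ and $\Sigma$ on $X$ correspond on every mapping space to the invertible shift on $\mathcal{D}$. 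Consequently $\Omega$ and $\Sigma$ are mutually inverse equivalences, and the loop relation witnessed by a triangle holds on the source side iff it holds on the target side; that is, a triangle is a fiber sequence iff it is a cofiber sequence, giving the third axiom and establishing that $X$ is stable. For the triangulated statement, I would combine the isomorphism $h(N^{sm}_{dg}(\mathcal{D}))\simeq H^{0}(\mathcal{D})$ with the equivalence $N^{big}_{dg}(\mathcal{D})\simeq N^{sm}_{dg}(\mathcal{D})$ proved above to get an equivalence $h(N^{big}_{dg}(\mathcal{D}))\simeq H^{0}(\mathcal{D})$; under it $\Sigma$ corresponds to the shift $[1]$ and the distinguished triangles (the pushout squares) correspond to the exact triangles $K\to K'\to Cone(f)\to K[1]$ defining the Bondal--Kapranov structure, so by the earlier proposition that $h$ of a stable $\infty$-category is triangulated in the sense of Verdier, this is an equivalence of triangulated categories.

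The hard part will be the identification of $\Omega$ and $\Sigma$ on $X$ with the chain-level shifts through the truncation $\tau_{\ge 0}$: since $\tau_{\ge 0}$ discards negative degrees and alters degree $0$, one must check that the loop and suspension computations are not corrupted by truncation. This is exactly where pretriangulation is indispensable, as it guarantees that the shifted objects $X[\pm 1]$ exist in $\mathcal{D}$, so that the relevant Hom-complexes already carry the correct (co)homology after truncation and the Dold--Kan computation of homotopy (co)limits returns the expected shift.
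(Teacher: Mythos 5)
Your verification of pointedness and of the existence of fibers and cofibers follows the paper's route essentially verbatim: Dold--Kan, the explicit description of $Hom^{\bullet}_{\mathcal{D}}(Cone(f),Z)$ and $Hom^{\bullet}_{\mathcal{D}}(Z,Cone(f))$ as mapping cones, and computation of homotopy fibre products of truncated complexes via fibrant replacement. The genuine gap is in the third axiom. You propose to deduce ``a triangle is a fiber sequence iff it is a cofiber sequence'' from the claim that $\Omega$ and $\Sigma$ are mutually inverse equivalences identified with the shifts $[\mp1]$. That deduction is not a triviality: it is precisely Lurie's characterization of stability (a pointed $\infty$-category admitting finite limits and colimits is stable iff its loop functor is an equivalence, see \cite{Lurie 2}), a theorem which is neither stated in the paper nor proved by you, and whose hypotheses you have not verified --- it requires \emph{all} finite limits and colimits, whereas you have only produced fibers and cofibers, i.e.\ pullbacks and pushouts over the zero object. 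Your sentence ``the loop relation witnessed by a triangle holds on the source side iff it holds on the target side'' asserts the conclusion rather than proving it. There is also a circularity hazard: $\Omega$ on $N^{big}_{dg}(\mathcal{D})$ is only defined once fibers are known to exist, and identifying it with $[-1]$ \emph{as an $\infty$-functor} (not merely objectwise on mapping spaces) needs a naturality argument you do not supply.

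What the paper does instead, and what your proposal is missing, is an explicit dg-level rotation argument showing that the cocartesian square on $f\colon X\to Y$ is also cartesian: writing $j\colon Y\to Cone(f)$ for the canonical map, it constructs closed degree-$0$ morphisms $g\colon X\to Cone(j)[-1]$ and $h\colon Cone(j)[-1]\to X$ with $h\circ g=Id_X$, and an explicit element $\alpha\in Hom^{-1}_{\mathcal{D}}(Cone(j)[-1],Cone(j)[-1])$ with $d(\alpha)=g\circ h-Id$, so that $X\simeq Cone(j)[-1]$ in $N^{big}_{dg}(\mathcal{D})$; combined with the fiber computation this yields the third axiom directly, with no appeal to any $\Omega$-invertibility criterion. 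Your route could be repaired either by carrying out this rotation computation, or by first establishing general finite limits and colimits in the nerve (e.g.\ using biproducts in $\mathcal{D}$) and then citing Lurie's criterion explicitly; as written, the step is a gap. Your final identification of $H^0(\mathcal{D})$ with $h(N^{big}_{dg}(\mathcal{D}))$ as triangulated categories, via the equivalence of the big and small nerves, is reasonable in outline and indeed more detailed than the paper's closing remark.
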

\begin{proof}\normalfont
By definition, a pretriangulated dg-category has a $0$ object, meaning that is both final and terminal satisfying for every object $X$ of $\mathcal{D}$
\[
Hom^{\bullet}_{\mathcal{D}}(X,0)\simeq 0 \simeq Hom^{\bullet}_{\mathcal{D}}(0,X)
\]
hence, by construction, 
\[
Map_{\mathcal{D}_{\Delta}}(X,0)\simeq \ast \simeq Map_{\mathcal{D}_{\Delta}}(0,X)
\] 
Hence, $0$ is both initial and terminal in $N^{big}_{dg}(\mathcal{D})$. We have to prove that given a $1$-simplex $f:X \to Y $ in $N^{big}_{dg}(\mathcal{D})$, i.e. a closed degree $0$ morphism in $\mathcal{D}$, it has fiber and cofiber, namely the diagrams
\[
 \begin{tikzpicture}
    \def\x{1.5}
    \def\y{-1.2}
    \node (A2_2) at (2*\x, 2*\y) {$Y$};
    \node (A2_1) at (2*\x, 1*\y) {$X$};
    \node (A1_2) at (1*\x, 2*\y) {$0$};
   \path (A2_1) edge [->] node [auto] {$\scriptstyle{f}$} (A2_2);
    \path (A1_2) edge [->] node [auto,swap] {$\scriptstyle{0}$} (A2_2);
    \end{tikzpicture}
\]
\[
 \begin{tikzpicture}
    \def\x{1.5}
    \def\y{-1.2}
    \node (A2_2) at (1*\x, 1*\y) {$X$};
    \node (A2_1) at (2*\x, 1*\y) {$Y$};
    \node (A1_2) at (1*\x, 2*\y) {$0$};
   \path (A2_2) edge [->] node [auto] {$\scriptstyle{f}$} (A2_1);
    \path (A2_2) edge [->] node [auto,swap] {$\scriptstyle{0}$} (A1_2);
    \end{tikzpicture}
\]
admit respectively homotopy limits and colimits. We prove that there exists homotopy cartesian 
\[
 \begin{tikzpicture}
    \def\x{1.5}
    \def\y{-1.2}
    \node (A2_2) at (3*\x, 3*\y) {$Y$};
    \node (A2_1) at (3*\x, 1*\y) {$X$};
    \node (A1_2) at (1*\x, 3*\y) {$0$};
   \node (A1_1) at (1*\x, 1*\y) {$Cone(f)[-1]$};
   \path (A2_1) edge [->] node [auto] {$\scriptstyle{f}$} (A2_2);
    \path (A1_2) edge [->] node [auto,swap] {$\scriptstyle{0}$} (A2_2);
       \path (A1_1) edge [->] node [auto] {$\scriptstyle{i}$} (A2_1);
   \path (A1_1) edge [->] node [auto] {$\scriptstyle{0}$} (A1_2);
   \end{tikzpicture}
\]
and cocartesian diagrams
\[
 \begin{tikzpicture}
    \def\x{1.5}
    \def\y{-1.2}
    \node (A2_2) at (1.5*\x, 1*\y) {$X$};
    \node (A2_1) at (3.5*\x, 1*\y) {$Y$};
    \node (A1_2) at (1.5*\x, 3*\y) {$0$};
       \node (A1_1) at (3.5*\x, 3*\y) {$Cone(f)$};
  \path (A2_1) edge [->] node [auto] {$\scriptstyle{j}$} (A1_1);
   \path (A1_2) edge [->] node [auto] {$\scriptstyle{0}$} (A1_1);
   \path (A2_2) edge [->] node [auto] {$\scriptstyle{f}$} (A2_1);
    \path (A2_2) edge [->] node [auto,swap] {$\scriptstyle{0}$} (A1_2);
    \end{tikzpicture}
\]
Consider the case for the cofiber of $f$. Let $j:Y\to Cone (f)$ be the $1$-simplex corresponding to degree 0 morphism
\[
Hom^0_{\mathcal{D}}(Y,Cone(f))=Hom^0_{\mathcal{D}}(Y,Y)\oplus Hom^1_{\mathcal{D}}(Y,X)
\]
given by $(Id_{Y},0)$. This is a closed morphism because
\[
d(Id_{Y},0)=(d(Id_Y)+f\circ 0, d(0))=(0,0)
\]
and the composition $j\circ f$ is null homotopic in the sense that if we take 
\[
h=(0,Id_Y)\in Hom^{-1}_{\mathcal{D}}(Y,Cone(f))=Hom^{-1}_{\mathcal{D}}(Y,Y)\oplus Hom^0_{\mathcal{D}}(Y,X)
\] 
then 
\[
d(h)=(d(0)+f\circ Id_Y, d(0))=(f,0)=j\circ f
\] 
Now we prove that for every $Z\in Ob(\mathcal{D})$, the induced maps of simplicial sets
\[
Map_{C_{\Delta}}(Cone(f),Z)\to Map_{C_{\Delta}}(Y,Z)\underset{Map_{C_{\Delta}}(X,Z)}{\overset{h}{\times}} \ast
\]
are weak equivalences. The homotopy limit of the right hand side can be computed via the Dold-Kan correspondence. First, we compute what is the homotopy fibre product of the diagram of chain complexes
\[
 \begin{tikzpicture}
    \def\x{4}
    \def\y{-1.5}
    \node (A2_2) at (2*\x, 2*\y) {$\tau_{\ge0}(Hom^{\bullet}_{\mathcal{D}}(X,Z)^{op})$};
    \node (A2_1) at (2*\x, 1*\y) {$0$};
    \node (A1_2) at (0.7*\x, 2*\y) {$\tau_{\ge0}(Hom^{\bullet}_{\mathcal{D}}(Y,Z)^{op})$};
   \path (A2_1) edge [->] node [auto] {$\scriptstyle{-\circ f}$} (A2_2);
    \path (A1_2) edge [->] node [auto,swap] {$\scriptstyle{0}$} (A2_2);
    \end{tikzpicture}
\]
According to computation of homotopy fibre products in model categories, the homotopy fibre product of this diagram is the fibre product of the diagram
\[
 \begin{tikzpicture}
    \def\x{4}
    \def\y{-1.5}
    \node (A2_2) at (2*\x, 2*\y) {$\tau_{\ge0}(Hom^{\bullet}_{\mathcal{D}}(X,Z)^{op})$};
    \node (A2_1) at (2*\x, 1*\y) {$P(0)$};
    \node (A1_2) at (0.8*\x, 2*\y) {$P(-\circ f)$};
   \path (A2_1) edge [->>] node [auto] {$\scriptstyle{p_1}$} (A2_2);
    \path (A1_2) edge [->>] node [auto,swap] {$\scriptstyle{p_2}$} (A2_2);
    \end{tikzpicture}
\]
the chain complex $P(-\circ f)$ is given by
\[
\begin{cases} 
P(-\circ f)_0=Ker(d_{Hom^0_{\mathcal{D}}(Y,Z)})\oplus Hom^{-1}_{\mathcal{D}}(X,Z) \oplus D^0, &\mbox{if } k = 0 \\
P(-\circ f)_k=Hom^{-k}_{\mathcal{D}}(Y,Z)\oplus Hom^{-k-1}_{\mathcal{D}}(X,Z) \oplus Hom^{-k}_{\mathcal{D}}(X,Z), &\mbox{if } k > 0. 
\end{cases} 
\]
where $D^0\subseteq Hom^{0}_{\mathcal{D}}(X,Z)$ is given by the equation 
\[
g^0=h^0\circ f + d(g^{-1})
\] 
for $h^0\in Ker(d_{Hom^{0}_{\mathcal{D}}(Y,Z)})$ and $g^{-1}\in Hom^{-1}_{\mathcal{D}}(X,Z)$. 
\newpage
Differential is given for $k=0$ by
\[
d_0=\begin{vmatrix} 
d^{0}_{(Y,Z)}& 0& 0\\ 
-(- \circ f)& -d^{-1}_{(X,Z)}& Id_{(X,Z)}\\ 
0& 0& d^{0}_{(Y,Z)} 
\end{vmatrix}
\]
and for $k>0$
\[
d_k=\begin{vmatrix} d^{-k}_{(Y,Z)} & 0 & 0 \\ -(- \circ f) & -d^{-k-1}_{(X,Z)} & Id_{(X,Z)} \\ 0 & 0 & d^{-k}_{(Y,Z)} \end{vmatrix}
\]
where $d^{k}_{(X,Y)}=d_{Hom^{k}_{\mathcal{D}}(X,Y)}$ for $X,Y\in \mathcal{D}$.
Similarly $P(0)$ is given by
\[
\begin{cases} 
P(0)_0=Hom^{-1}_{\mathcal{D}}(X,Z) \oplus Im(d_{Hom^{-1}_{\mathcal{D}}(X,Z)}), &\mbox{if } k = 0 \\
P(0)_k=Hom^{-k-1}_{\mathcal{D}}(X,Z) \oplus Hom^{-k}_{\mathcal{D}}(X,Z), &\mbox{if } k > 0. 
\end{cases} 
\]
with differential for $k=0$ given by
\[
d_0=\begin{vmatrix} -d^{-1}_{(X,Z)} & Id_{(X,Z)} \\ 0 & d^{0}_{(Y,Z)} \end{vmatrix}
\]
and for $k>0$
\[
d_k=\begin{vmatrix} -d^{-k-1}_{(X,Z)} & Id_{(X,Z)} \\ 0 & d^{-k}_{(Y,Z)} \end{vmatrix}
\]
Hence, the homotopy fiber product is given for $k>0$ by
\[
Hom^{-k}_{\mathcal{D}}(Y,Z)\oplus Hom^{-k-1}_{\mathcal{D}}(X,Z)\oplus Hom^{-k-1}_{\mathcal{D}}(X,Z) \oplus Hom^{-k}_{\mathcal{D}}(X,Z)
\]
with differential 
\[
d_k=\begin{vmatrix} d^{-k}_{(X,Z)} & 0 & 0 & 0 \\ -(- \circ f) & -d^{-k-1}_{(X,Z)} & 0 & 0 \\ 0 & 0 & -d^{-k-1}_{(X,Z)} & Id_{(X,Z)} \\ 0 & 0 & 0 & d^{-k}_{(X,Z)} \end{vmatrix}
\]
and for $k=0$ by the subvector space of the direct sum $P(- \circ f)_0 \oplus P(0)_0$ given by the identification of the components $D^0$ and $Im(d_{Hom^{-1}_{\mathcal{D}}(X,Z)})$ via the inclusion in $Hom^{0}_{\mathcal{D}}(X,Z)$. 
This chain complex, using the quasi-isomorphism $P(0)\simeq 0$, is the chain complex whose degree $k>0$ piece is given by
\[
Hom^{-k}_{\mathcal{D}}(Y,Z)\oplus Hom^{-k-1}_{\mathcal{D}}(X,Z)
\]
with differential 
\[
d_k=\begin{vmatrix} d^{-k}_{(X,Z)} & 0  \\ -(- \circ f) & -d^{-k-1}_{(X,Z)} \end{vmatrix}
\]
\newpage
and in degree $0$ by
\[
Ker(d_{Hom^{0}_{\mathcal{D}}(Y,Z)})\oplus E^{-1} 
\]
where $E^{-1}\subseteq Hom^{-1}_{\mathcal{D}}(X,Z)$ is defined by the equation 
\[
d(g^{-1}) + g^0\circ f=0
\]
for some $g^0\in Ker(d_{Hom^{0}_{\mathcal{D}}(Y,Z)})$. This chain complex is clearly quasi isomorphic to $\tau_{\ge0}(Hom^{\bullet}_{\mathcal{D}}(Cone(f),Z)^{op})$ with induced morphism 
\[
\tau_{\ge0}(Hom^{\bullet}_{\mathcal{D}}(Cone(f),Z)^{op}) \to \tau_{\ge0}(Hom^{\bullet}_{\mathcal{D}}(Y,Z)^{op})\underset{\tau_{\ge0}(Hom^{\bullet}_{\mathcal{D}}(X,Z)^{op})}{\overset{h}{\times}} 0
\] 
being a quasi-isomorphims. A similar proof holds in the case of homotopy fibers. In this case there is a morphism $i: Cone(f)[-1] \to X$ inducing, for every $Z\in Ob(\mathcal{D})$ an homotopy equivalence
\[
\tau_{\ge0}(Hom^{\bullet}_{\mathcal{D}}(Z, Cone(f)[-1])^{op}) \to \tau_{\ge0}(Hom^{\bullet}_{\mathcal{D}}(Z,X)^{op})\underset{\tau_{\ge0}(Hom^{\bullet}_{\mathcal{D}}(Z,Y)^{op})}{\overset{h}{\times}} 0
\] 
Such morphism is given by 
\[
i=(Id_X,0) \in Hom^{0}_{\mathcal{D}}(X,X)\oplus Hom^{1}_{\mathcal{D}}(Y,X)=Hom^{0}_{\mathcal{D}}(Cone(f)[-1], X)
\] 
Moreover, computation of the homotopy limit of the diagram
\[
 \begin{tikzpicture}
    \def\x{4}
    \def\y{-1.5}
    \node (A2_2) at (2*\x, 2*\y) {$\tau_{\ge0}(Hom^{\bullet}_{\mathcal{D}}(Z,Y)^{op})$};
    \node (A2_1) at (2*\x, 1*\y) {$\tau_{\ge0}(Hom^{\bullet}_{\mathcal{D}}(Z,X)^{op})$};
    \node (A1_2) at (1*\x, 2*\y) {$0$};
   \path (A2_1) edge [->] node [auto] {$\scriptstyle{-\circ f}$} (A2_2);
    \path (A1_2) edge [->] node [auto,swap] {$\scriptstyle{0}$} (A2_2);
    \end{tikzpicture}
\]
gives the chain complex 
\[
\begin{cases} 
Ker(d_{Hom^{0}_{\mathcal{D}}(Z,X)})\oplus F^{-1}, &\mbox{if } k = 0 \\
Hom^{-k}_{\mathcal{D}}(Z,X)\oplus Hom^{-k-1}_{\mathcal{D}}(Z,Y), &\mbox{if } k > 0. 
\end{cases} 
\]
where $F^{-1}\subseteq Hom^{-1}_{\mathcal{D}}(Z,Y)$ given by the equation 
\[
d(g^{-1}) + g^0\circ f=0
\] 
for some $g^0\in Ker(d_{Hom^{0}_{\mathcal{D}}(Z,X)})$ and differential given for $k>0$ by
\[
d_k=\begin{vmatrix} d^{-k}_{(Z,X)} & 0  \\ -(- \circ f) & -d^{-k-1}_{(Z,Y)} \end{vmatrix}
\]
This chain complex is quasi-isomorphic via the morphism induced by $i$ to the chain complex $\tau_{\ge0}(Hom^{\bullet}_{\mathcal{D}}(Z, Cone(f)[-1])^{op})$.
\newpage
Let $f: X \to Y$ be a morphism and consider the induced morphism $j: Y \to Cone(f)$. We prove that there is an homotopy equivalence $X\simeq Cone(j)[-1]$ in $N^{big}_{dg}(\mathcal{D})$. In particular this implies that cocartesian diagrams are also cartesian. Recall that $Cone(j)[-1]$ is the twisted complex given by 
\[
 \begin{tikzpicture}
    \def\x{3}
    \def\y{-1.5}
    \node (A2_2) at (2*\x, 2*\y) {$(Y)_1$};
    \node (A2_1) at (1*\x, 2*\y) {$(Y\oplus X)_0$};
   \path (A2_1) edge [->] node [auto] {$\scriptstyle{Id_{Y}\oplus f}$} (A2_2);
    \end{tikzpicture}
\]
We have that  
\[
Hom^0_{\mathcal{D}}(X,Cone(j)[-1])=Hom^{-1}_{\mathcal{D}}(X,Y)\oplus Hom^0_{\mathcal{D}}(X,Y\oplus X)
\]
and 
\[
Hom^0_{\mathcal{D}}(Cone(j)[-1],X)=Hom^1_{\mathcal{D}}(Y,X)\oplus Hom^0_{\mathcal{D}}(Y\oplus X,X)
\]
moreover we have closed degree $0$ morphisms $\pi_{X}: Y\oplus X \to X$, $i_X: X \to Y\oplus X$ such that $\pi_{X}\circ i_{X}=Id_{X}$. Define
\begin{align*}
&h=(0,\pi_X)\\
&g=(0,(-f)\oplus i_X) \, 
\end{align*} 
it is easy to show that those are closed degree $0$ morphisms. Clearly, on one hand, $h\circ g=Id_{X}$. On the other hand, there exists 
\[
\alpha \in Hom^{-1}_{\mathcal{D}}(Cone(j)[-1],Cone(j)[-1])
\] 
such that $d(\alpha)=g\circ h - Id_{Cone(j)[-1]}$. Recall that $Hom^k_{\mathcal{D}}(Cone(j)[-1],Cone(j)[-1])$ is the cochain complex
\[
Hom^k_{\mathcal{D}}(Y,Y)\oplus Hom^{k+1}_{\mathcal{D}}(Y,Y\oplus X)\oplus Hom^{k-1}_{\mathcal{D}}(Y\oplus X,Y)\oplus Hom^k_{\mathcal{D}}(Y\oplus X,Y\oplus X) 
\]
in particular, the differential in degree $-1$ is given by 
\[
d_{-1}(c^{-1},c^0,c^{-2},d^{-1})=\begin{vmatrix} d(c^{-1})+(Id_{Y}\oplus f)\circ c^0 \\ d(c^{0}) \\ d(c^{-2})+c^{-1}\circ (Id_{Y}\oplus f)+(Id_{Y}\oplus f)\circ d^{-1} \\ d(d^{-1})+c^0\circ (Id_{Y}\oplus f)   \end{vmatrix}
\]
Now, we have that
\[
g\circ h - Id_{Cone(j)[-1]}=(-Id_{Y},0,0,-Id_{Y\oplus X}+((-f)\oplus i_x)\circ \pi_X)
\]
and we have the identities
\begin{align*}
&(Id_{Y}\oplus f)\circ (-i_{Y})=-Id_{Y} \\
&(-i_Y)\circ (Id_{Y}\oplus f)= -Id_{Y\oplus X}+((-f)\oplus i_x)\circ \pi_X \, 
\end{align*} 
hence taking
\[
\alpha=(0,-i_Y,0,0)
\]
it is clear that $d(\alpha)=g\circ h - Id_{Cone(j)[-1]}$. A similar argument holds in order to prove that cartesian diagrams are also cocartesian. The fact that $H^0(\mathcal{D})$ is equivalent to $h(N^{big}_{dg}(\mathcal{D}))$ as triangulated categories follows easily from the proof.
\end{proof}

\raggedright
\bibliographystyle{plainnat}


\end{document}